\documentclass[12pt]{article}

\usepackage{amsfonts,amsmath,amssymb}
\usepackage{dutchcal}
\usepackage{hyperref}
\usepackage{esint}
\usepackage[charter]{mathdesign}
\usepackage{tikz-cd}
\usepackage{xcolor}
\usepackage{enumerate}
\usepackage{epsf,epsfig,amsfonts,a4wide}
\usepackage{amsfonts,mathdots}
\usepackage{amsmath,amssymb,amsthm}
\usepackage{url}
\usepackage{amsmath,amssymb,amsthm}

\DeclareFontFamily{U}{mathx}{}
\DeclareFontShape{U}{mathx}{m}{n}{<-> mathx10}{}
\DeclareSymbolFont{mathx}{U}{mathx}{m}{n}
\DeclareMathAccent{\widehat}{0}{mathx}{"70}
\DeclareMathAccent{\widecheck}{0}{mathx}{"71}

\newtheorem{Theorem}{Theorem}

\newtheorem{Lemma}{Lemma}[section]
\newtheorem{Proposition}{Proposition}[section]
\newtheorem{Corollary}{Corollary}[section]

\newtheorem{Definition}{Definition}[section]

\newtheorem{Remark}{Remark}[section]

\theoremstyle{remark}

\newcommand{\T}{\mathbb{T}}
\newcommand{\C}{\mathbb{C}}
\newcommand{\Z}{\mathbb{Z}}
\newcommand{\1}{\langle}
\newcommand{\2}{\rangle}

\newcommand{\be}{\begin{equation}}
\newcommand{\ee}{\end{equation}}

\newcommand{\R}{\mathbb{R}}

\newcommand{\pd}[2]{\frac{\partial#1}{\partial#2}}

\newcommand{\weg}[1]{}

\usepackage{tikz-cd}

\newcommand{\A}{\mathbb{A}}
\newcommand{\W}{\mathcal{W}}
\newcommand{\ZZ}{\mathcal{Z}}
\newcommand{\HH}{\mathcal{H}}
\newcommand{\D}{\mathbb{D}}
\newcommand{\LL}{\mathcal{L}}
\newcommand{\OO}{\mathcal{O}}

\title{Integrable metrics with potentials on the Hardy space and a class of PDEs}

\author{A. Konyaev and P. Topalov}

\begin{document}

\maketitle

\begin{abstract}
We introduce a new class of quadratic in the momenta Hamiltonians with potential
on Banach spaces of infinite dimension which possess an infinite family of conserved quantities 
in involution with respect to a naturally defined Poisson structure. 
We show that the Hamiltonian flows of the constructed integrals are locally well defined and 
that they are related to a hierarchy of evolution PDEs via nonlinear analytic transformations of 
the phase space, called $Q$-transforms. 
Our framework offers a new geometric interpretation of hierarchies of integrable PDEs, 
linking infinite-dimensional phase spaces directly to the classical 
geometric phenomenon of geodesic equivalence.
\end{abstract}

\section{Introduction}
In this paper, we introduce and study the properties of a novel class of Hamiltonian systems on 
Banach spaces of infinite dimension. These systems possess an infinite family of conserved 
quantities that are in involution with respect to a naturally defined Zakharov-Shabat type 
Poisson bracket. Their Hamiltonian function consists of a quadratic in the momenta kinetic part 
and a potential part that depends only on the position of the point, and on a functional parameter. 
More specifically,  we consider the Hardy space of complex-valued functions on the 1d torus 
$\T:=\R/\Z$,
\[
\HH^{1,+}_0:=\big\{f\in\HH^1\,\big|\, f_k=0\,\,\text{for}\,\,k\le 0\big\},
\]
where $f_k$, $k\in\Z$, are the Fourier coefficients of $f$, and $\HH^1$ is the Hilbert space of
complex-valued functions on $\T$ equipped with the norm  
$\|f\|_{\HH^1}:=\big(\sum_{k\in\Z}|f_k|^2\1k\2^2\big)^{1/2}$, where $\1k\2:=|k|$ for $k\ne 0$ and
$\1 0\2:=1$. Our phase space is the cartesian product $\HH^{1,+}_0\times\HH^{1,+}_0$, where
the first component represents the position of a point, and the second component 
is the momentum. For a given $(w,p)\in\HH^{1,+}_0\times\HH^{1,+}_0$ consider the quadratic in
the momenta Hamiltonian function
\begin{equation}\label{eq:H-introduction}
H(w,p):=\big\1 w,p^2\big\2=\int_0^1\check{w}(y)p(y)^2\,dy
\end{equation}
where $\1 f,g\2=\sum_{k\in\Z}f_kg_k$ for $f,g\in\HH^1$ and $\check{w}(y)=w(-y)$ for $y\in\T$.
Since $\HH^1$ is a Banach algebra, the Hamiltonian function \eqref{eq:H-introduction} is 
well-defined and analytic on $\HH^{1,+}_0\times\HH^{1,+}_0$. 
We refer the reader to Section \ref{sec:phase_space} for the definition of the Poisson bracket on
$\HH^{1,+}_0\times\HH^{1,+}_0$.
For any given $w\in\HH^{1,+}_0$, we define in Section \ref{sec:the_operator} a family of 
bounded operators (cf. \eqref{eq:A_k(H)} and Proposition \ref{prop:A-operator(H)}),
called {\em geodesic operators},
\[
A_k(w) : \HH^{1,+}_0\to\HH^{1,+}_0,\quad k\ge 1,
\]
such that $\big\1 A_k(w)f,g\big\2=\big\1 f,A_k(w)g\big\2$ for any $f,g\in\HH^{1,+}_0$.
For $(w,p)\in\HH^{1,+}_0\times\HH^{1,+}_0$ we then set (cf. \eqref{eq:I_k})  
\begin{equation}\label{eq:I_k-introduction}
I_k(w,p):=\big\1 A_k(w)\Gamma(w)p,p\big\2,\quad k\ge 1,
\end{equation}
where $\Gamma(w)$ is the Hankel operator on $\HH^{1,+}_0$ with symbol $w$,
\begin{equation}\label{eq:Hankel-introduction}
\Gamma(w) : \HH^{1,+}_0\to\HH^{1,+}_0,\quad p\mapsto\Pi^+_0(w\check{p}),
\end{equation}
and $\Pi^+_0 : \HH^1\to\HH^{1,+}_0$ is the $\HH^1$-orthogonal (Szeg\"o) projection onto $\HH^{1,+}_0$ 
(cf. \eqref{eq:Pi(H)}). The following Theorem is proved in Section \ref{sec:involutivity_f=0}.

\begin{Theorem}\label{th:involutivity}
\begin{itemize}
\item[(i)] The functions $I_k : \HH^{1,+}_0\times\HH^{1,+}_0\to\C$, 
$k\ge 1$, are analytic integrals of the Hamiltonian flow of $H$.
\item[(ii)] $\{I_k,I_l\}=0$ for any $k,l\ge 1$.
\end{itemize}
\end{Theorem}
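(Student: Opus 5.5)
The plan is to reduce both statements to the classical mechanism of geodesic equivalence (Benenti systems), now on the infinite-dimensional phase space. In that mechanism integrals of a quadratic Hamiltonian are produced from a self-adjoint operator satisfying a compatibility (Killing-type) identity with the metric.

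\textbf{Step 1: the Hamiltonian equations.} I first write Hamilton's equations of $H$ with respect to the Zakharov--Shabat type Poisson structure of Section \ref{sec:phase_space}. Since $H(w,p)=\langle w,p^2\rangle$ is quadratic in $p$, the variational derivatives are
\[
\partial_pH=2\Gamma(w)p,\qquad \partial_wH=\Pi^+_0(p^2)\ \text{(up to the pairing conventions)}.
\]
Hence $\dot w$ is linear in $\Gamma(w)p$ and $\dot p$ is quadratic in $p$. Here I am reading the Hankel operator $\Gamma(w)$ as the inverse metric, i.e.\ $H=\langle\Gamma(w)p,p\rangle$, using the symmetry $\langle\Gamma(w)p,p\rangle=\langle w,p^2\rangle$. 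In this language $I_k=\langle A_k(w)\Gamma(w)p,p\rangle$ is the quadratic function attached to the $(1,1)$-tensor $A_k(w)$. This is exactly the form of the Benenti integrals $\langle \det(L)L^{-1}\ldots\rangle$ in finite dimensions.

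\textbf{Step 2: analyticity.} Analyticity of $I_k$ follows because $w\mapsto A_k(w)$ is analytic into bounded operators by Proposition \ref{prop:A-operator(H)} and \eqref{eq:A_k(H)}. The map $w\mapsto\Gamma(w)$ is linear and bounded, and $\HH^1$ is a Banach algebra.

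\textbf{Step 3: conservation and involutivity via a generating function.} I will pass to a generating function. I expect $A_k(w)$ to be the coefficients of a resolvent-type family; the natural candidate is $\lambda\mapsto A_\lambda(w)=\bigl(\mathrm{Id}-\lambda\,\cdot\bigr)^{-1}$ built from multiplication or shift by $w$, in the way $\det(L-\lambda)(L-\lambda)^{-1}$ appears for geodesically equivalent metrics. Set
\[
I(\lambda)=\sum_k\lambda^kI_k=\langle A_\lambda\Gamma(w)p,p\rangle .
\]
It then suffices to prove $\{I(\lambda),I(\mu)\}=0$ for small $\lambda,\mu$, since $H$ is itself a member (presumably $A_0=\mathrm{Id}$, or $H$ is $I_1$ up to a constant). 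Computing this bracket gives a cubic form in $p$. Its vanishing reduces to an operator identity: the derivative of $A_\lambda$ in the direction $\Gamma(w)A_\mu\Gamma(w)p$, paired appropriately, must be symmetric in $\lambda\leftrightarrow\mu$. This is the infinite-dimensional analogue of the statement that $L$ is Nijenhuis and compatible with $g$. I would verify the identity in Fourier coefficients, using the commutation relations between Hankel operators and the Szeg\"o projection. The key relations are
\[
\Pi^+_0(w\check p)\ \text{versus}\ \Pi^+_0(w\check{q}),\qquad \Gamma(w)^2\ \text{as a Toeplitz-type operator.}
\]
I would also use the resolvent identity $A_\lambda-A_\mu=(\lambda-\mu)A_\lambda(\cdot)A_\mu$.

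\textbf{Main obstacle.} I expect the hardest step to be this last operator identity, the ``geodesic compatibility'' of $A_\lambda(w)$ with $\Gamma(w)$. It requires the explicit form of $A_k(w)$ from Section \ref{sec:the_operator}, together with careful handling of the projection $\Pi^+_0$ when products of Hardy functions are reflected by $\check{\ }$. I would first check the identity for finite trigonometric polynomials $w,p$, where all sums are finite and the algebra is exact. I would then extend by density and analyticity, using continuity of both sides in the $\HH^1$ topology.
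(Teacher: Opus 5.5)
Steps 1--2 are fine. The proposal has a genuine gap, though: Step 3 never proves anything. The identity everything rests on is deferred to the ``main obstacle'', and two of the premises you use to set it up are false.

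\textbf{The generating operator is not a resolvent.} By \eqref{eq:A-operator}, $A(\mu,w)=\sum_{k\ge0}A_k(w)\mu^k$ is the difference-quotient operator $[A(\mu,w)f](z)=\big(w(z)f(\mu)-w(\mu)f(z)\big)/(z-\mu)$. The identity $A_\lambda-A_\mu=(\lambda-\mu)A_\lambda A_\mu$ fails for it: with $w=e_2$ one has $A(\mu,w)e_1=\mu e_1$. What does hold, as you can check directly from \eqref{eq:A-operator}, is the adjugate-type relation $w(\nu)A(\mu,w)-w(\mu)A(\nu,w)=(\nu-\mu)A(\mu,w)A(\nu,w)$.

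\textbf{$H$ is not a member of the family.} On $\HH^{1,+}_0$ one has $A_0(w)=0$, so $I_0=0$. Also $I_1=\langle S_-w,p\rangle^2-w_1\langle S_-w,p^2\rangle$ is not proportional to $H$: for $w=e_2$, $p=e_1$ both equal $1$, but for $w=e_3$, $p=e_1+e_2$ one gets $H=2$ and $I_1=1$. In the reduction of Section \ref{sec:finite_dynamics}, $A_{N,N}=\mathrm{Id}$ produces $H_N$, but there is no such top coefficient in infinite dimensions. So even a complete proof of $\{I(\lambda),I(\mu)\}=0$ would leave part (i) open. The bracket $\{I(\mu),H\}$ has to be computed separately, as the paper does.

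\textbf{The missing idea.} What removes your ``main obstacle'' is the closed form of the generating function in Proposition \ref{prop:I-formulas} (ii). Inserting \eqref{eq:A-operator} and using the Cauchy formula for $\Pi^+$ gives $I(\mu;w,p)=\mu^{-1}\big([\Gamma(w)p]^2(\mu)-w(\mu)[\Gamma(w)(p^2)](\mu)\big)$. Thus $I(\mu)$ is built from point evaluations at $\mu$.
\begin{itemize}
\item Writing $g(\mu)=\langle\tilde\delta_\mu,g\rangle$ with $\tilde\delta_\mu(z)=\mu z/(1-\mu z)$ gives explicit gradients (Lemma \ref{lem:X_I(mu)}).
\item The two-point residue identity \eqref{eq:cycle1} then makes $\langle\nabla_pI(\mu),\nabla_wI(\nu)\rangle$ visibly symmetric in $\mu\leftrightarrow\nu$.
\item For $H$, using $\nabla_wH=p^2$ and $\nabla_pH=2\Gamma(w)p$, the bracket collapses to $-\alpha_\mu[\Gamma(w)(p^2)](\mu)+2\beta_\mu[\Gamma(w)p](\mu)=0$.
\item Comparing Taylor coefficients in $\mu,\nu$ gives (i) and (ii). No Fourier-level Nijenhuis identity and no density argument are needed.
\end{itemize}

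\textbf{An alternative closer to your framing.} The geodesic-equivalence viewpoint can also be made rigorous, but by a route you did not propose.
\begin{itemize}
\item On $\mathcal{HF}_N$ the $I_k$ do not depend on $p_j$ for $j>N$. So the brackets there equal those of the Benenti integrals of the Hankel metric (Lemma \ref{lem:finite_dynamics}, Corollary \ref{coro:geodesic_equivalence}), with $H$ restricting to the top one.
\item These vanish by the finite-dimensional theorem quoted in Section \ref{sec:finite_dynamics}, and by polynomiality also for complex data.
\item The scaling $I_k(cw,p)=c^2I_k(w,p)$ carries this to every polynomial $w$ with nonzero leading coefficient.
\item Density of such $w$, together with continuity of $dI_l(X_{I_k})$, finishes the argument.
\end{itemize}
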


\noindent  An explicit formula for the integrals \eqref{eq:I_k-introduction} is given
in Proposition \ref{prop:I-formulas}. 
It is worth noting that our Poisson structure is {\em weak} on $\HH^{1,+}_0\times\HH^{1,+}_0$, 
implying that the Hamiltonian field $X_F$ of a given $C^1$-function 
$F : \HH^{1,+}_0\times\HH^{1,+}_0\to\C$ does not necessarily take values in 
$\HH^{1,+}_0\times\HH^{1,+}_0$ but rather in a larger space. Remarkably, this does not happen 
with the integrals \eqref{eq:I_k-introduction}, and they have well-defined analytic Hamiltonian 
vector fields $X_{I_k} : \HH^{1,+}_0\times\HH^{1,+}_0\to\HH^{1,+}_0\times\HH^{1,+}_0$, $k\ge 1$
(cf. Corollary \ref{coro:X_I_k-vector-field(H)}). In particular, the corresponding Hamiltonian flows
are locally well-defined.

\noindent{\em The case of potentials.}
One of the core results of this paper is the existence of a class of potentials
$V_f : \OO\to\C$, where $\OO$ is an open dense subset in $\HH^{1,+}_0\times\HH^{1,+}_0$ and 
$f\in\HH^1$ is a parameter, such that the Hamiltonian function
\begin{equation}\label{eq:H_f-introduction}
H_f(w,p):=H(w,p)+V_f(w),\quad(w,p)\in\OO\times\HH^{1,+}_0,
\end{equation}
possesses an infinite family of integrals in involution.
More specifically, for any given $f\in\HH^1$ and $w\in\OO$ we define
\begin{equation}\label{eq:V_f-introduction}
V_f(w):=\1 1,f/w\2=\int_0^1\frac{f(y)}{w(y)}\,dy,
\end{equation}
where $\OO:=\big\{w\in\HH^{1,+}_0\,\big|\,w(y)\ne 0\,\,\forall y\in\T\big\}$ is an open dense set
in $\HH^{1,+}_0$. In addition to \eqref{eq:H_f-introduction}, for any $k\ge 1$ consider 
the {\em individual potentials}
\begin{equation}\label{eq:U^f_k-introduction}
U_k^f(w):=-\big(w\Pi^+_0(f/w)\big)_{k+1},\quad w\in\OO,
\end{equation}
where $(g)_l$ denotes the $l$-th Fourier coefficient of $g\in\HH^1$, and then set
\begin{equation}\label{eq:J^f_k-introduction}
J^f_k(w,p):=I_k(w,p)+U^f_k(w),\quad(w,p)\in\OO\times\HH^{1,+}_0,
\end{equation}
for any $k\ge 1$.
Let us briefly comment on formula \eqref{eq:U^f_k-introduction}.
Since $\HH^1$ is a Banach algebra, the product of $w$ and the Szeg\"o projection 
$\Pi^+_0(f/w)$ of $f/w$ belong to $\HH^1$, and hence, the individual potential $U^f_k(w)$ 
is well-defined and equals the $(k+1)$-th Fourier coefficient of the product $w\Pi^+_0(f/w)$.

The following result extends Theorem \ref{th:involutivity} to the case of potentials.

\begin{Theorem}\label{th:J_k-involutivity}
Take $f\in\HH^1$. Then we have:
\begin{itemize}
\item[(i)] The functions $J_k^f : \OO\times\HH^{1,+}_0\to\C$, 
$k\ge 1$,
are analytic integrals of the Hamiltonian flow of $H_f$.
\item[(ii)] $\{J_k^f,J_l^f\}=0$ for any $k,l\ge 1$.
\end{itemize}
\end{Theorem}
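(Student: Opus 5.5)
The plan is to reduce Theorem~\ref{th:J_k-involutivity} to Theorem~\ref{th:involutivity} by expanding the brackets bilinearly.

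First observe that $H=I_1$ up to normalization. Formula \eqref{eq:H-introduction} should coincide with $I_1$ for the appropriate geodesic operator $A_1(w)$, with $A_1(w)\Gamma(w)$ acting as multiplication by $w$ paired against $p$; I would check this against Proposition~\ref{prop:I-formulas}. Similarly, $U_1^f$ should agree with $V_f$ up to a constant or a sign: the first Fourier coefficient of $w\Pi^+_0(f/w)$ relates to $\1 1,f/w\2$ since $w_0=0$. If these identifications hold, then $H_f=J_1^f$ (possibly up to a constant factor), and part (i) follows from part (ii) with $l=1$. Analyticity is immediate: $I_k$ is analytic by Theorem~\ref{th:involutivity}, and on $\OO$ the map $w\mapsto 1/w$ is analytic in the Banach algebra $\HH^1$, while $\Pi^+_0$ and taking Fourier coefficients are bounded linear maps.

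For part (ii) I would expand
\[
\{J_k^f,J_l^f\}=\{I_k,I_l\}+\{I_k,U_l^f\}+\{U_k^f,I_l\}+\{U_k^f,U_l^f\}.
\]
The first term vanishes by Theorem~\ref{th:involutivity}(ii). The last term vanishes because the Poisson bracket of Section~\ref{sec:phase_space} is canonical-type: $w$-only functions commute, since $\{w_m,w_n\}=0$. It remains to prove the cross relation
\[
\{I_k,U_l^f\}=\{I_l,U_k^f\}.
\]
Since $U^f$ depends only on $w$, $\{I_k,U_l^f\}=\pm\big\1 \partial_p I_k, d_wU_l^f\big\2 = \pm 2\big\1 A_k(w)\Gamma(w)p\,,\,\nabla_w U_l^f\big\2$, using the symmetry of $A_k$ and the appropriate symmetry of the Hankel operator. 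This expression is linear in $p$, so the identity reduces to an operator identity in $w$:
\[
\Gamma(w)A_k(w)\nabla U_l^f(w)=\Gamma(w)A_l(w)\nabla U_k^f(w)
\]
(modulo transposes) for all $w\in\OO$.

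To prove this identity I would compute $\nabla U_l^f$ explicitly. Differentiating $-\big(w\Pi^+_0(f/w)\big)_{l+1}$ in direction $h$ gives $-\big(h\Pi^+_0(f/w)-w\Pi^+_0(fh/w^2)\big)_{l+1}$, which I would rewrite as a pairing with $h$ via Hankel/Toeplitz adjoints. I would then plug in the explicit form of $A_k(w)$ from \eqref{eq:A_k(H)}, presumably built from shifts and the coefficients of $w$, analogous to the finite-dimensional geodesically equivalent operators $\sum$ of $\det L\,L^{-1}$ powers. The cleanest route is likely a generating-function one: assemble $\sum_k A_k(w)z^k$ and $\sum_k U_k^f z^k$ into resolvent-type expressions in a spectral parameter, and show that the symmetric relation follows from a resolvent identity, in the spirit of the classical Benenti/Topalov criterion $\{I_k,U_l\}=\{I_l,U_k\}$, equivalently $A_k\, dU_l = A_l\, dU_k$ for potentials compatible with geodesic equivalence.

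The main obstacle is this operator identity. I would first verify it on the dense subset of trigonometric polynomials $w$ and $f$, where everything becomes a finite-dimensional computation resembling the classical $L$-system setting; I would then extend by continuity and analyticity. A secondary check is that the Hamiltonian vector fields of $U_k^f$ are genuine $\HH^{1,+}_0$-valued, so that the brackets are defined despite the weak Poisson structure. Since $X_{U}$ has only a $p$-component $\nabla_w U$, which lies in $\HH^{1,+}_0$ by the formula above, this check should be fine.
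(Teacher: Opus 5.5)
\paragraph{There is a genuine gap, in two places.}

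\textbf{First gap: part (i).} You reduce (i) to (ii) via the identification $H_f=J_1^f$, and that identification is false. By \eqref{eq:I_k-formula},
\[
I_1(w,p)=\langle S_-w,p\rangle^2-w_1\langle S_-w,p^2\rangle .
\]
At $w=e_3\in\OO$ this gives $I_1=p_2^2$, whereas $H(e_3,p)=(p^2)_3=2p_1p_2$. Similarly, $U_1^f(w)=-w_1(f/w)_1$, while $V_f(w)=(f/w)_0$.

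$H$ is not a member of the family $(I_k)_{k\ge1}$. It agrees with $I_N$ only after restricting to $\mathcal{PF}_N$, where $A_N(w)$ acts as the identity. So (i) needs its own argument. Expanding as you do in (ii), and using $\{H,I_k\}=0$ and $\{V_f,U_k^f\}=0$, you get
\[
\{H_f,J_k^f\}=\{H,U_k^f\}-\{I_k,V_f\}=2\big\langle\Gamma(w)p,\,\nabla_wU_k^f-A_k(w)^T\nabla_wV_f\big\rangle .
\]
Your proposal gives no reason why this vanishes.

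\textbf{Second gap: part (ii).} You correctly reduce to the cross relation $\{I_k,U_l^f\}=\{I_l,U_k^f\}$. But you then leave the resulting operator identity as ``the main obstacle,'' supported only by a heuristic plan. Note also that the density argument over trigonometric polynomials does not make the problem finite-dimensional, since $f/w$ is not a trigonometric polynomial.

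The missing idea, which settles (i) and (ii) at once, is the intertwining relation of Lemma~\ref{lem:U_k}:
\[
A_k(w)^T\,\nabla_wV_f=\nabla_wU_k^f,\qquad w\in\OO .
\]
In words, the individual potentials are built so that their gradients come from that of $V_f$ via the adjoint geodesic operators. It is a few-line check:
\begin{enumerate}
\item Pair $\nabla_wV_f=-\Pi^+_0(\check f/\check w^2)$ with $A_k(w)\delta w=w\,\Pi^-_0(L^{k+1}\delta w)-\delta w\,\Pi^-_0(L^{k+1}w)$.
\item Use $\Pi^-_0(L^{k+1}g)=\sum_{1\le j\le k}g_je_{j-k-1}$.
\item Compare with the derivative of $U_k^f=-\sum_{1\le j\le k}w_j(f/w)_{k+1-j}$.
\end{enumerate}

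With this relation, the first display vanishes, which proves (i). For (ii), writing $A_k=A_k(w)$,
\[
\{J_k^f,J_l^f\}=2\big\langle A_k\Gamma(w) p,A_l^T\nabla_wV_f\big\rangle-2\big\langle A_l\Gamma(w) p,A_k^T\nabla_wV_f\big\rangle=2\big\langle (A_lA_k-A_kA_l)\Gamma(w)p,\nabla_wV_f\big\rangle=0 .
\]
The last equality is the commutativity of $A_k(w)$ and $A_l(w)$ (Proposition~\ref{prop:A-properties_k}(ii)), an ingredient your proposal never invokes.
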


\noindent As in the case of the integrals \eqref{eq:I_k-introduction} of $H$, the integrals
\eqref{eq:J^f_k-introduction} of $H_f$ have well-defined Hamiltonian vector fields 
$X_{J^f_k} : \OO\times\HH^{1,+}_0\to\HH^{1,+}_0\times\HH^{1,+}_0$, $k\ge 1$ 
(cf. Corollary \ref{coro:X_J_k-vector-field(H)}). In particular, the corresponding Hamiltonian flows
are locally well-defined (cf. Theorem \ref{th:existence}).

\medskip

\noindent {\em The momentum map.} Let us now fix the parameter function $f\in\HH^1$ and
assign to any $(w,p)\in\OO\times\HH^{1,+}_0$ the complex-valued sequence
$\big(J^f_k(w,p)\big)_{k\ge 1}$ formed by the values of the integrals \eqref{eq:J^f_k-introduction},
\begin{equation}\label{eq:pre-J^f-introduction}
J^f : (w,p)\mapsto\big(J^f_k(w,p)\big)_{k\ge 1}\,.
\end{equation}
This is the {\em momentum map} of the Poisson commuting integrals \eqref{eq:J^f_k-introduction}.
We will see in Section \ref{sec:potentials} that for any given $(w,p)\in\OO\times\HH^{1,+}_0$
the sequence $\big(J^f_k(w,p)\big)_{k\ge 1}$ belongs
to the Hilbert space $\mathfrak{h}^1$ of complex-valued sequences $(x_k)_{k\ge 1}$ with finite norm 
$\|x\|_{\mathfrak{h}^1}:=\big(\sum_{k\ge 1}|x_k|^2\1 k\2^2\big)^{1/2}$.
In this way, we have a well-defined map
\begin{equation}\label{eq:J^f-introduction}
\OO\times\HH^{1,+}_0\to\HH^{1,+}_0,\quad (w,p)\mapsto J^f(w,p),
\end{equation}
where we identify $J^f(w,p)$ with the element of $\HH^{1,+}_0$ whose
$k$-th Fourier coefficient is $J^f_k(w,p)$, $k\ge 1$.
We have the following Proposition.

\begin{Proposition}\label{prop:J^f-introduction}
The momentum map \eqref{eq:J^f-introduction} is analytic.
\end{Proposition}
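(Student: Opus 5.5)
The plan is to write the momentum map as the sum of two maps, $J^f(w,p)=I(w,p)+U^f(w)$, and prove that each takes values in $\HH^{1,+}_0$ and is analytic there. Here $I(w,p)$ is the element of $\HH^{1,+}_0$ with Fourier coefficients $I_k(w,p)$, and $U^f(w)$ is the element with coefficients $U^f_k(w)$.

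\emph{The potential part.} For $w\in\OO$ I use the following facts.
\begin{itemize}
\item The map $w\mapsto 1/w$ is analytic from $\OO$ to $\HH^1$, since inversion is analytic on the open set of invertible elements of the Banach algebra $\HH^1$, and $w\in\OO$ is invertible there because $w$ is continuous and nonvanishing on $\T$.
\item Multiplication by $f$ is bounded on $\HH^1$.
\item $\Pi^+_0$ is a bounded projection.
\item Multiplication by $w$ is bilinear and bounded.
\end{itemize}
Together these show that $w\mapsto w\,\Pi^+_0(f/w)$ is analytic from $\OO$ to $\HH^1$. By definition, $U^f(w)$ is, up to sign, the shifted sequence $\big((w\Pi^+_0(f/w))_{k+1}\big)_{k\ge1}$. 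This is $S^*\Pi^+_0$ applied to $w\Pi^+_0(f/w)$, where $S^*$ is the backward shift $g\mapsto\big(g_{k+1}\big)_{k\ge 1}$. Since $\1k\2\le\1k+1\2$, the operator $S^*$ is bounded from $\HH^{1,+}_0$ to $\HH^{1,+}_0$. The composition is therefore analytic with values in $\HH^{1,+}_0$.

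\emph{The kinetic part.} Here I would use the explicit formula for $I_k$ from Proposition \ref{prop:I-formulas}. I expect it to express $I_k(w,p)$ as a Fourier coefficient, of index roughly $k+1$ or a shift of it, of a bilinear expression built from $w$, $p$, their Szegő projections and products. Examples of such terms are $\big(w\,\Pi^+_0(\cdots)\big)_{k+1}$, or $\1 A_k(w)\Gamma(w)p,p\2$ rewritten as a coefficient of a product like $(\Gamma(w)p)\cdot p$ or $w\,\Pi(\check p\, p)$. I would then assemble the whole sequence $(I_k(w,p))_{k\ge1}$ as $S^{*m}\Pi^+_0$ applied to a fixed finite combination of products in $\HH^1$. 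Each such combination is a polynomial, hence analytic, map $\HH^{1,+}_0\times\HH^{1,+}_0\to\HH^1$, built from bounded multilinear operations in the Banach algebra $\HH^1$ together with the bounded maps $\Pi^+_0$, $\check{\ }$ and $\Gamma(w)$. Since $\Gamma(w)$ is bounded and depends linearly on $w$, the map $(w,p)\mapsto\Gamma(w)p$ is continuous bilinear. It follows that $(w,p)\mapsto I(w,p)$ is a continuous polynomial, hence analytic, map into $\HH^{1,+}_0$.

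\emph{Main obstacle.} The hard step is this identification: showing that the sequence $(I_k)_{k\ge1}$ as a whole, and not just each coefficient, is the image of an $\HH^1$-valued analytic expression under a bounded shift. This matters because analyticity of each coordinate does not by itself give analyticity into $\mathfrak h^1$. If the explicit formula for $I_k$ instead involves sums over $k$ that are not directly a single coefficient, I would fall back on a second route. That route proves a uniform bound $\|I(w,p)\|_{\mathfrak h^1}\le C(\|w\|,\|p\|)$ on bounded sets, using the $\HH^1$ algebra estimate coefficientwise. It then combines this with analyticity of each coordinate $I_k$ from Theorem \ref{th:involutivity}. A locally bounded map into a Hilbert space whose coordinate functionals, which form a separating family, are all analytic is weakly analytic and hence analytic. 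The same argument would cover $U^f$ if needed.
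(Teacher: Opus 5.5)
Your treatment of the potential part is correct and is the paper's: $U^f(w)=-S_-\big(w\,\Pi^+_0(f/w)\big)$, which is analytic by Lemma~\ref{lem:1/w-analytic} and the algebra property. The gap is in the kinetic part, which is the real content of the proposition. Neither of your two routes is carried out there.

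Route one rests on a guessed formula. The termwise formula in Proposition~\ref{prop:I-formulas}(i) is a sum over $1\le j\le k$, which is exactly the case in which you said you would fall back. The missing idea is to resum in $\mu$ before extracting coefficients. By the operator-norm expansion $A(\mu,w)=\sum_k A_k(w)\mu^k$, one has $\sum_{k\ge1}I_k(w,p)\mu^k=\langle A(\mu,w)\Gamma(w)p,p\rangle$. Since $[A(\mu,w)g](z)=\big(w(z)g(\mu)-w(\mu)g(z)\big)/(z-\mu)$, the Cauchy-integral representation of $\Pi^+$ evaluates this pairing in closed form (Proposition~\ref{prop:A-properties}(i)):
\[
\sum_{k\ge 1}I_k(w,p)\,\mu^k=\frac{1}{\mu}\Big([\Gamma(w)p]^2-w\,\Gamma(w)(p^2)\Big)(\mu),\quad \mu\in\D .
\]
Reading off Taylor coefficients gives $I(w,p)=S_-\big([\Gamma(w)p]^2-w\,\Gamma(w)(p^2)\big)$. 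Together with your potential term, this is formula \eqref{eq:J(w,p)}, and analyticity is then immediate from the algebra property. Note that this expression is quadratic in each of $w$ and $p$, not bilinear.

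Route two is sound at the functional-analytic level: a locally bounded $\mathfrak{h}^1$-valued map with holomorphic coordinates is holomorphic. However, the bound $\sum_k k^2|I_k(w,p)|^2\le C$ on bounded sets is the whole difficulty, and it does not follow from the algebra estimate applied coefficientwise. For example, estimate the factor $\langle S_-^kw,\cdot\rangle$ in Proposition~\ref{prop:I-formulas}(i) by Cauchy--Schwarz. This yields $|I_k|\le C\big(\sum_{m>k}|w_m|^2\big)^{1/2}$. But $\sum_k k^2\sum_{m>k}|w_m|^2\approx\frac13\sum_m m^3|w_m|^2$, which diverges for $w_m=m^{-2}$ even though such $w$ lies in $\HH^{1,+}_0$.

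The estimate can be repaired by moving the weight onto the $w$-factor, $k\,|\langle S_-^kw,v\rangle|\le\sum_{m\ge1}(m+k)|w_{m+k}|\,|v_m|$. One then bounds the remaining factors ($\langle S_-^jw,p\rangle$, $w_j$, $S_-^{j-1}p$, $p\,S_-^{j-1}p$) in $\ell^1$ of Fourier coefficients and applies Minkowski's inequality in $k$. Some such argument has to be supplied. As written, your proposal does not establish that $(I_k)_{k\ge1}$ even lies in $\mathfrak{h}^1$, let alone that it depends analytically on $(w,p)$.
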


\noindent A similar statement holds for the momentum map
of the Benjamin-Ono equation -- see \cite[Theorem 3]{GKT1} 
as well as \cite{GKT,GT,Atiyah} for the context. 
Note that in contrast to \cite[Theorem 3]{GKT1}, here we derive an explicit formula for
the momentum map (cf. \eqref{eq:J(w,p)} in Section \ref{sec:potentials}).

\medskip

\noindent{\em The $Q$-transforms.} The $Q$-transforms are nonlinear maps that 
transform the Hamiltonian flows of the integrals \eqref{eq:J^f_k-introduction} on the
phase space $\OO\times\HH^{1,+}_0$ onto solutions of a class of nonlinear PDEs. 
Here we briefly discuss these maps (see Section \ref{sec:PDEs} for details).
For any given value of the parameter $f\in\HH^1$, consider the commutative diagram
\begin{equation}\label{eq:Q-diagram}
\begin{tikzcd}
&\OO\times\HH^{1,+}_0\arrow[ld, "\mathcal{Q}_f^-", swap]\arrow[rd, "\mathcal{Q}_f^+"]&\\
\HH^{1,-}_0\arrow[rr, dashrightarrow]& &\HH^{1,+}
\end{tikzcd}
\quad\quad
\begin{tikzcd}
&(w,p)\arrow[ld, "\mathcal{Q}_f^-", swap, mapsto]\arrow[rd, "\mathcal{Q}_f^+", mapsto]&\\
-\Pi^-_0\big(f/w^2\big)\arrow[rr, dashrightarrow, mapsto]& &\Pi^+\big(f/w^2\big)
\end{tikzcd}
\end{equation}
where 
\[
\HH^{1,+}=\big\{f\in\HH^1\,\big|\, f_k=0\,\,\text{for}\,\,k<0\big\}
\quad\text{\rm and}\quad
\HH^{1,-}_0=\big\{f\in\HH^1\,\big|\, f_k=0\,\,\text{for}\,\,k\ge 0\big\},
\]
and $\Pi^+ : \HH^1\to\HH^{1,+}$ and $\Pi^-_0 : \HH^1\to\HH^{1,-}_0$ are the corresponding 
$\HH^1$-orthogonal projections onto  $\HH^{1,+}$, respectively, $\HH^{1,-}_0$ 
(cf. \eqref{eq:Pi(H)} in Section \ref{sec:spaces}). 
The maps $\mathcal{Q}_f^\pm$ are the {\em Q-transforms} -- see \eqref{eq:Q+} and \eqref{eq:Q-}.
The dashed horizontal map is defined only on the image of $Q_f^-$ and, generally, 
depends on the choice of $w\in\OO$.

Let us now briefly explain how the diagram \eqref{eq:Q-diagram} is related to a class of
nonlinear PDEs. We refer to Section \ref{sec:PDEs} for the details.
It follows from Theorem \ref{th:J_k-involutivity} and the local existence of the 
Hamiltonian flows of $H_f$ and $J^f_k$, $k\ge 1$, that for any given index $k_0\ge 1$ and
$(w_0,p_0)\in\OO\times\HH^{1,+}_0$ there exist $T>0$ and a $C^\ell$-map, $\ell\ge 3$,
\begin{equation}\label{eq:joint_solution}
[0.T)\times[0,T)\to\OO\times\HH^{1,+}_0,\quad(x,t)\mapsto\big(w(x,t),p(x,t)\big),
\end{equation}
which is the {\em joint flow} of the Hamiltonian functions $H_f$ and $J^f_{k_0}$ with initial
data $(w_0,p_0)$, i.e., we have that $(w,p)|_{(x,t)=(0,0)}=(w_0,p_0)$, and for any given 
$t\in[0,T)$ the curve $[0,T)\to\OO\times\HH^{1,+}_0$, $x\mapsto\big(w(x,t),p(x,t)\big)$,
is the unique solution of the Hamiltonian equation with Hamiltonian $H_f$ and 
initial data $\big(w(0,t),p(0,t)\big)$, and for any given $x\in[0,T)$ the curve 
$[0,T)\to\OO\times\HH^{1,+}_0$, $t\mapsto\big(w(x,t),p(x,t)\big)$,
is the unique solution of the Hamiltonian equation with Hamiltonian $J^f_{k_0}$
and initial data $\big(w(x,0),p(x,0)\big)$. 
The $Q$-transform of \eqref{eq:joint_solution},
\[
q^\pm=\mathcal{Q}^+_f(w,p),\quad[0,T)\times[0,T)\to\HH^{1,\pm},
\]
is then a solution of a system of PDEs (see Theorem \ref{th:(w,q)-evolution}).
By choosing different values of the parameter $f\in\HH^1$ and $k_0\ge 1$, 
we obtain solutions of a large class of nonlinear PDEs. 
We illustrate this procedure in Section \ref{sec:PDEs},
where we generate solutions of systems of KdV and BKM equations. 
Note that in this way one can generate solutions of a much larger class of nonlinear equations, 
including the Camassa-Holm, the Harry Dym, and a number of other equations.

\medskip

The kinetic part \eqref{eq:H-introduction} of the Hamiltonian function 
\eqref{eq:H_f-introduction} can be considered as an instance of an infinite-dimensional 
version of a class of Riemannian metrics with completely integrable geodesic flows, 
called {\em projectively} or, equivalently, {\em geodesically} equivalent metrics. 
These metrics were introduced by Levi-Civita and have been extensively studied 
(cf. \cite{Levi-Civita,MT1,T1,BKM1}). 
We discuss this relation in Section \ref{sec:finite_dynamics}, where we prove that 
for specifically chosen initial data, an infinite-dimensional analog of the Hamiltonian reduction 
reduces the Hamiltonian flow of \eqref{eq:H-introduction} to the geodesic flow of a metric that 
allows a geodesic equivalence. We expect that the indicated correspondence between the 
rich theory of geodesically equivalent metrics and their possible infinite-dimensional 
analogs could uncover, via the corresponding $Q$-transforms, new properties of integrable PDE's. 
As mentioned above, the Hamiltonian flows of the integrals \eqref{eq:J^f_k-introduction} are 
locally well-defined on the phase space $\HH^{1,+}_0\times\HH^{1,+}_0$ and depend smoothly on 
the initial data (Theorem \ref{th:existence}). In this way, by applying the $Q$-transform, 
we obtain solutions of a class of non-linear PDE's. This geometric approach bears similarities to
the method of characteristics for solving first order PDE's and can be considered as 
its extension to a larger class of equations. 
In contrast to the dressing action on loop groups (cf., e.g., \cite{Palais} and 
the references therein), our phase space is a direct infinite-dimensional analog of 
the co-tangent bundle of a manifold equipped with the canonical Poisson structure. 
The Hamiltonian function and the integrals are quadratic in momenta with a potential and, 
in view of their simplicity, can be regarded as preliminary versions of action coordinates 
(cf., e.g., \cite{GKT,GT}).
Note that finite dimensional flat metrics appear in the framework of the systems of 
hydrodynamic type \cite{balinskii, dn, ferapontov4, mokhov4, mokhov2, maltsev}. 
Such systems have been around for quite some time: apparently they first appeared in
the description of superfluid helium by Landau \cite{landau}. 
For the BKM equations, we refer to \cite{BKM1,app_nij4,finite1}.
In a different context, formal difference expressions similar to the geodesic 
operators studied in Section \ref{sec:the_operator} appear in \cite{Sh1}.

Let us also discuss our choice of function spaces. The Hardy space $\HH^{1,+}_0$ (as well as
the supplementary space $\W^{1,+}_r$, $r>1$, of holomorphic functions defined in 
Section \ref{sec:spaces}) appears naturally in our framework since the kinetic part 
\eqref{eq:H-introduction} of the Hamiltonian function \eqref{eq:H_f-introduction} is 
related to the Hankel operator \eqref{eq:Hankel-introduction}. 
We do not make an effort to lower the regularity exponent of the spaces
since we prefer to work with Banach algebras and analytic maps, thus keeping the exposition 
as transparent as possible. However, note that all proved results continue to hold 
for the space $\HH^{s,+}_0$ with regularity exponent $s>1/2$.
The results also hold for the supplementary space $\W^{1,+}_{r,0}$, $r>1$.
We expect that the class of potentials \eqref{eq:V_f-introduction} can be enlarged if 
the dynamics is restricted to specific subspaces of entire functions of
the Hardy space $\HH^{1,+}_0$. For simplicity, we restrict ourselves to spaces over $\C$.
The various restrictions of the Hamiltonian dynamics to real subspaces require separate 
consideration since they are related to the important question of the global in time existence of 
the obtained Hamiltonian flows.

\section{Spaces of analytic functions}\label{sec:spaces}
In this Section we introduce the function spaces used in the paper and 
prove several preliminary results needed in the sequel. For given Banach spaces $X$ and $Y$ denote 
by $\LL(X,Y)$ the Banach space of bounded linear maps $X\to Y$.
If $X=Y$ we set $\LL(X):=\LL(X,X)$.

\noindent{\em The space $\W^1_r$.}
We first define a Hilbert spaces of holomorphic function on the annulus
\[
\A_r:=\big\{z\in\C\,\big|\,1/r<|z|<r\big\},\quad r>1.
\]
These spaces will play a supplementary role in our analytic framework.
For a given $r>1$ denote by $\W_r^1$ the linear space of holomorphic functions $f : \A_r\to\C$ 
on $\A_r$ whose Laurent coefficients\footnote{Alternatively, $f_k$ is the $k$-th Fourier coefficient
of the restriction $f(e^{2\pi i y})$, $y\in[0,1]$.}
\begin{equation}\label{eq:f_k}
f_k:=\frac{1}{2\pi i}\oint_{|z|=1}\frac{f(z)}{z^{k+1}}\,dz=\int_0^1f(e^{2\pi i y})\,e^{-2\pi i k y}\,dy,
\quad k\in\Z,
\end{equation}
have finite norm
\begin{equation}\label{eq:W1-norm}
\|f\|_{\W^1_r}:=\Big(\sum_{k\in\Z}|f_k|^2r^{2|k|}\1 k\2^2\Big)^{1/2}<\infty\,.
\end{equation}
(The cycle of integration in \eqref{eq:f_k} is oriented counterclockwise.)
We equip $\W^1_r$ with the norm \eqref{eq:W1-norm} and consider the scalar product
\begin{equation}\label{eq:W-scalar_product}
(f,g)_{\W^1_r}:=\sum_{k\in\Z}f_k\overline{g}_k r^{2|k|}\1 k\2^2,
\quad f,g\in\W^1_r,
\end{equation}
where, as in the Introduction,
\[
\1 k\2=\left\{
\begin{array}{l}
|k|,\quad k\ne 0,\\
1,\quad k=0,
\end{array}
\right.
\]
and
\begin{equation}\label{eq:L2-pairing}
\1 f,g\2=\sum_{k\in\Z} f_k g_k\quad f,g\in\W^1_r .
\end{equation}
It is clear from the Cauchy-Schwarz inequality that the sesquilinear form \eqref{eq:W-scalar_product} 
and the bilinear form \eqref{eq:L2-pairing} are continuous. In addition to the space $\W^1_r$ we will 
need the closed subspaces
\[
\W^{1,+}_r:=\big\{f\in\W^1_r\,\big|\,f_k=0\,\,\,\text{\rm for}\,\,\,k<0 \big\}\quad
\text{and}\quad\W^{1,-}_r:=\big\{f\in\W^1_r\,\big|\,f_k=0\,\,\,\text{for}\,\,\,k>0 \big\}
\]
as well as the closed subspace $\W^{1,+}_{r,0}$ [and $\W^{1,-}_{r,0}$] in $\W^{1,+}_r$ 
[respectively $\W^{1,-}_r$] with vanishing $0$-th Laurent coefficient $f_0=0$.
We equip these subspaces with the restricted norm \eqref{eq:W1-norm}.
We then have the splitting
\[
\W^1_r=\W^{1,-}_r\oplus\W^{1,+}_{r,0}\quad\text{\rm and}\quad\W^1_r=\W^{1,-}_{r,0}\oplus\W^{1,+}_r
\]
with continuous projections
\begin{equation}\label{eq:Pi}
\Pi^\pm : \W^1_r\to\W^{1,\pm}_r\quad\text{and}\quad\Pi^\pm_0 : \W^1_r\to\W^{1,\pm}_{r,0}\,.
\end{equation}
Note that the elements of $\W^{1,+}_r$ are holomorphic functions on the disk 
\[
\D_r:=\big\{z\in\C\,\big|\,|z|<r\big\}
\]
that are continuous on its closure in $\C$. Similarly, the elements of $\W^{1,-}_r$ are holomorphic 
functions on $\C\setminus\D_{1/r}$ that are continuous on its closure in $\C$.
We have the following simple Lemma. 

\begin{Lemma}\label{lem:W-space}
Take $r>1$. Then we have:
\begin{itemize}
\item[(i)] When equipped with the scalar product \eqref{eq:W-scalar_product} the space
of holomorphic functions $\W^1_r$ is a Hilbert space.
\item[(ii)] The pointwise multiplication
of functions
\[
\W^1_r\times\W^1_r\to\W^1_r,\quad (f,g)\mapsto fg,
\]
is well-defined and continuous. In particular, $\W^1_r$ is a Banach algebra.
\item[(iii)] Any element $f\in\W^1_r$ can be (uniquely) represented as an absolutely and 
uniformly convergent Laurent series
\[
f(z)=\sum_{k\in\Z}f_kz^k=\sum_{k\ge 1}\frac{f_{(-k)}}{z^k}+
\sum_{k\ge 0}f_kz^k
\]
on $\A_r$. In addition, any $f\in\W^1_r$ extends to a continuous function on
the closure $1/r\le|z|\le r$ of $\A_r$ in $\C$ so that
\[
\max_{1/r\le|z|\le  r}|f(z)|\le C\|f\|_{\W^1_r}
\]
where the constant $C>0$ is independent of the choice of $f\in\W^1_r$.
\end{itemize}
\end{Lemma}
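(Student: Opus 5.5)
The plan is to reduce everything to weighted sequence spaces via the Laurent coefficient map and then treat the three items in the order (iii), (i), (ii), since the uniform bound in (iii) is what makes the other two work.

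\emph{Item (iii).} For $f\in\W^1_r$ and $1/r\le|z|\le r$ we have $|f_kz^k|\le|f_k|r^{|k|}$. By Cauchy--Schwarz,
\[
\sum_{k\in\Z}|f_k|r^{|k|}\le\Big(\sum_{k\in\Z}|f_k|^2r^{2|k|}\1 k\2^2\Big)^{1/2}\Big(\sum_{k\in\Z}\1 k\2^{-2}\Big)^{1/2}=C\|f\|_{\W^1_r}.
\]
So the Laurent series converges absolutely and uniformly on the closed annulus. It represents $f$ on $\A_r$ because $f$ is holomorphic there, and the $f_k$ are its Laurent coefficients (independent of the circle chosen). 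This gives the continuous extension and the max bound. Uniqueness follows from uniqueness of Laurent coefficients.

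\emph{Item (i).} The map $f\mapsto(f_k)_{k\in\Z}$ is an isometry of $\W^1_r$ into the weighted space $\ell^2_{r}:=\{(a_k)\,|\,\sum|a_k|^2r^{2|k|}\1 k\2^2<\infty\}$, which is a Hilbert space. It is also onto: the estimate from (iii) shows that for any $(a_k)\in\ell^2_r$ the series $\sum a_kz^k$ converges locally uniformly on $\A_r$, hence defines a holomorphic function with those coefficients. Therefore $\W^1_r$ is complete.

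\emph{Item (ii).} This is the main step. The product $fg$ is holomorphic on $\A_r$ with $(fg)_n=\sum_k f_kg_{n-k}$, the sum being absolutely convergent by (iii). Set $a_k:=|f_k|r^{|k|}$ and $b_k:=|g_k|r^{|k|}$. Since $|n|\le|k|+|n-k|$, we get
\[
|(fg)_n|r^{|n|}\le\sum_k a_kb_{n-k}.
\]
So it suffices to show that the weighted space with weight $\1 k\2$ (i.e. $\HH^1$ on the sequence side) is a convolution algebra. For this I use $\1 n\2\le\1 k\2+\1 n-k\2$, which splits the estimate into two terms:
\[
\1 n\2\sum_k a_kb_{n-k}\le\sum_k(\1 k\2a_k)b_{n-k}+\sum_ka_k(\1 n-k\2b_{n-k}).
\]
By Young's inequality $\ell^2*\ell^1\subset\ell^2$, each term has $\ell^2$ norm at most $\|\1\cdot\2a\|_{\ell^2}\|b\|_{\ell^1}$, respectively $\|a\|_{\ell^1}\|\1\cdot\2 b\|_{\ell^2}$. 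By the Cauchy--Schwarz bound from (iii), $\|a\|_{\ell^1}\le C\|f\|_{\W^1_r}$ and $\|b\|_{\ell^1}\le C\|g\|_{\W^1_r}$. This yields
\[
\|fg\|_{\W^1_r}\le 2C\|f\|_{\W^1_r}\|g\|_{\W^1_r}.
\]
Bilinearity plus this bound gives continuity, so $\W^1_r$ is a Banach algebra (after an equivalent renormalization of the norm if one insists on the constant $1$).
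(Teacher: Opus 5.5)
Your proposal is correct and is essentially the paper's proof. It uses the Laurent-coefficient isometry onto the weighted $\ell^2$ space, with the Cauchy--Schwarz bound $\sum_k|f_k|r^{|k|}\le C\|f\|_{\W^1_r}$ giving both surjectivity and the sup estimate, and for (ii) the splitting $\langle n\rangle\le\langle k\rangle+\langle n-k\rangle$ with Young's inequality $\ell^2*\ell^1\subset\ell^2$ to reach $\|fg\|_{\W^1_r}\le 2C\|f\|_{\W^1_r}\|g\|_{\W^1_r}$; the only difference is that you prove (iii) before (i), while the paper obtains the same estimate inside the surjectivity argument for (i).
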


The Lemma in proved in Appendix.

The inequality in (iii) implies that the space $\W^1_r$ is continuously embedded in
the space of continuous functions on $1/r\le|z|\le r$. It follows from 
Lemma \ref{lem:A-operator} (ii) that $\W^{1,+}\subseteq\W^1_r$ and 
$\W^{1,+}_0\subseteq\W^1_r$ are a Banach subalgebras in $\W^1_r$.

For any $f\in\W^1_r$ and $z\in\A_r$ denote $\check{f}(z):=f(1/z)$.
Since $\check{f}_k=f_{-k}$, $k\in\Z$, we have that 
$\|\check{f}\|_{\W^1_r}=\|f\|_{\W^1_r}$ and hence the bijective map
\begin{equation}\label{eq:check_complex}
\W^1_r\to\W^1_r,\quad f\mapsto\check{f},
\end{equation}
is a (complex) linear isometry. 
Note that the pairing \eqref{eq:L2-pairing} can then be written as
\begin{equation}\label{eq:L2-pairing(W)}
\1 f,g\2=\frac{1}{2\pi i}\oint_{|z|=1} f(z)\,\check{g}(z)\,\frac{dz}{z},\quad f,g\in\W^1_r,
\end{equation}
where the cycle of integration is oriented counterclockwise.
Clearly, we have that
\begin{equation}\label{eq:check-move}
\1f,gh\2=\1 f\check{g},h\2
\end{equation}
for any $f,g,h\in\W^1_r$.

\medskip

\noindent{\em The Hardy space $\HH^{1,+}$.}
Denote by $L^2$ the space of square integrable complex-valued functions on the torus
$\T=\R/\Z$. As in the Introduction, consider the Sobolev space $\HH^1\subseteq\ L^2$ 
of complex-valued functions on $\T$ whose Fourier coefficients
\[
f_k:=\int_0^1f(y)\,e^{-2\pi i k y}\,dy
\]
have finite norm
\begin{equation}\label{eq:H1-norm}
\|f\|_{\HH^1}=\Big(\sum_{k\in\Z}|f_k|^2\1 k\2^2\Big)^{1/2}<\infty.
\end{equation}
When equipped with the scalar product
\begin{equation}\label{eq:H1-metric}
(f,g)_{\HH^1}:=\sum_{k\in\Z} f_k\overline{g}_k\1 k\2^2
\end{equation}
the space $\HH^1$ is a Hilbert space that is continuously embedded in $C(\T,\C)$. 
Moreover, the pointwise multiplication of functions
\[
\HH^1\times\HH^1\to\HH^1,\quad (f,g)\mapsto fg,
\]
is continuous. Hence, $\HH^1$ is a Banach algebra. In addition to $\HH^1$ we will also need 
the corresponding Hardy spaces
\[
\HH^{1,+}:=\big\{f\in\HH^1\,\big|\,f_k=0\,\,\text{for}\,\,k<0\big\}\quad\text{and}\quad
\HH^{1,-}:=\big\{f\in\HH^1\,\big|\,f_k=0\,\,\text{for}\,\,k> 0\big\}
\]
as well as the subspace $\HH^{1,+}_0$ [and $\HH^{1,-}_0$ ] in $\HH^{1,+}$ 
[respectively $\HH^{1,-}$ ] with vanishing $0$-th Fourier coefficient $f_0=0$.
In a similar way as above (see \eqref{eq:Pi}) we have the splitting
\[
\HH^1=\HH^{1,-}\oplus\HH^{1,+}_0\quad\text{and}\quad\HH^1=\HH^{1,-}_0\oplus\HH^{1,+}
\]
as well the continuous projections
\begin{equation}\label{eq:Pi(H)}
\Pi^\pm : \HH^1\to\HH^{1,\pm}\quad\text{and}\quad
\Pi^\pm_0 : \HH^1\to\HH^{1,\pm}_0\,.
\end{equation}
The spaces $\HH^{1,\pm}$ and $\HH^{1,\pm}_0$ are Banach subalgebras in $\HH^1$.
If with the help of the map $\T\to\mathbb{S}$, $y\mapsto e^{2\pi i y}$,
we identify the torus $\T$ with the circle $\mathbb{S}:=\{z\in\C\,|\,|z|=1\}$ in $\C$ 
we see from the definitions of the norms \eqref{eq:W1-norm} and \eqref{eq:H1-norm} that 
for any $r>0$ the space of holomorphic functions $\W^1_r$ is densely and 
continuously embedded in the Sobolev space $\HH^1$,
\[
\W^1_r\subseteq\HH^1\,.
\]
More specifically, the embedding is given by the map
\begin{equation}\label{eq:W->H}
\W^1_r\hookrightarrow\HH^1,\quad f\mapsto [y\mapsto f(e^{2\pi i y})].
\end{equation}
As in the case of $\W^{1,+}_r$, the elements of $\HH^{1,+}$ can be identified with 
holomorphic functions on the unit disk $\D$ by the formula
\begin{equation}\label{eq:f<->f(z)}
\HH^{1,+}\ni f\mapsto\sum_{k\ge 0}f_k\,z^k,\quad z\in\D\,.
\end{equation}
Since $f\in\HH^{1,+}$, the series converges uniformly on the closure of $\D$,
and hence the holomorphic function in \eqref{eq:f<->f(z)} is continuous on 
the closure of $\D$. 
In a similar way, we identify the elements of $\HH^{1,-}$ with holomorphic
functions on $\{z\in\C\,|\,|z|>1\}$,
\begin{equation}\label{eq:f<->f(1/z)}
\HH^{1,-}\ni f\mapsto\sum_{k\ge 0}\frac{f_{(-k)}}{z^k}
\end{equation}
where the series converge uniformly on $\C\setminus\D$.
More generally, for $f\in\HH^1$ and $z\in\mathbb{S}$ we set
\begin{equation}\label{eq:f(z)}
f(z):=\sum_{k\ge 1}\frac{f_{(-k)}}{z^k}+\sum_{k\ge 0}f_k\,z^k\,.
\end{equation}
It is clear that $\W^{1,\pm}_r\subseteq\HH^{1,\pm}$ and
$\W^{1,\pm}_{r,0}\subseteq\HH^{1,\pm}_0$ where the embeddings are dense and continuous.
In view of the embeddings, the operators and the bilinear forms defined on $\W^1_r$ extend 
by continuity to operators and forms on $\HH^1$ that will be denoted by the same letters.
In fact, the involution \eqref{eq:check_complex} extends to an isometry 
\[
\HH^1\to\HH^1,\quad f\mapsto\check{f},
\]
where $\check{f}(y)=f(-y)$. The pairing \eqref{eq:L2-pairing} extends by continuity
to a bounded bilinear form on $\HH^1$,
\begin{equation}\label{eq:L2-pairing(H)}
\1 f,g\2=\sum_{k\in\Z}f_k g_k=\int_0^1f(y)\check{g}(y)\,dy,\quad f,g\in\HH^1,
\end{equation}
that satisfies \eqref{eq:check-move}.

\medskip

\noindent The Banach algebras $\W^1_r$, $\W^{1,+}_r$, and $\HH^{1,+}$, satisfy 
the following important property.

\begin{Lemma}\label{lem:1/w}
Take $r>1$. Then we have:
\begin{itemize}
\item[(i)] If $w\in\W^{1,+}_r$ does not have zeros in the closure of the disk $\D_r$ then
the function $1/w$ belongs to $\W^{1,+}_r$.
\item[(ii)] If $w\in\W^1_r$ does not have zeros in the closure of the annulus $\A_r$ then
the function $1/w$ belongs to $\W^1_r$.
\item[(iii)] If $w\in\HH^1$ does not have zeros on the torus $\T$ then
the function $1/w$ belongs to $\HH^1$.
\item[(iv)] If $w\in\HH^{1,+}$ does not have zeros in the closure of the disk $\D$ 
(see the identification \eqref{eq:f<->f(z)}) 
then the function $1/w$ belongs to $\HH^{1,+}$.
\end{itemize}
\end{Lemma}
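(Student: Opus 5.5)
The approach is to treat each of the four spaces as a commutative unital Banach algebra and to identify its maximal ideal space (equivalently, its characters). Gelfand theory then says that $w$ is invertible if and only if $\chi(w)\ne 0$ for every character $\chi$. Once invertibility in the algebra is established, the inverse is automatically the pointwise reciprocal $1/w$, because multiplication in the algebra is pointwise multiplication of functions. By Lemma \ref{lem:W-space} (ii) and the remarks after it, all four spaces are Banach algebras. To get the unit norm equal to $1$ one rescales the norm, which is harmless. The argument proceeds space by space as follows.

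For $\HH^1$ (item (iii)), let $\chi$ be a character and put $\zeta:=\chi(e^{2\pi i y})$. Since $e^{\pm 2\pi i y}$ are inverse to each other, $\zeta\ne 0$. The bound $|\chi(g)|\le\|g\|$ gives $|\zeta^n|\le\|e^{2\pi i ny}\|_{\HH^1}=|n|$ for all $n\in\Z$, and letting $n\to\pm\infty$ forces $|\zeta|=1$. Trigonometric polynomials are dense in $\HH^1$ and $\chi$ is continuous, so $\chi(g)=g(\zeta)$ for all $g$, that is, $\chi$ is evaluation at a point of $\T$. Hence $w$ is invertible as soon as it has no zeros on $\T$.

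The same argument handles $\W^1_r$ (item (ii)). There the estimate becomes $|\zeta^n|\le r^{|n|}|n|$, which forces $1/r\le|\zeta|\le r$. The Laurent series converges absolutely on this closed annulus by Lemma \ref{lem:W-space} (iii), so characters are evaluations at points of the closure of $\A_r$.

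For $\W^{1,+}_r$ (item (i)), only nonnegative powers are available, so the estimate gives only $|\zeta|\le r$. Characters are therefore evaluations at points of the closed disk $\overline{\D_r}$, which gives (i). Item (iv) for $\HH^{1,+}$ is identical with $r=1$: characters are evaluations at points of $\overline{\D}$, using the identification \eqref{eq:f<->f(z)}.

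The only step needing care is the density claim: polynomials, or Laurent polynomials, must be dense in each of these spaces, and the series must converge at the boundary points so that $\chi(g)=g(\zeta)$ makes sense there. Both follow from the weighted $\ell^2$ definition of the norms, since the truncated Fourier series converge in norm. Absolute convergence at the boundary follows from the Cauchy–Schwarz estimate $\sum_k|f_k|r^{|k|}\le\|f\|\big(\sum_k\1 k\2^{-2}\big)^{1/2}$, which also underlies Lemma \ref{lem:W-space} (iii).

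A more elementary alternative for (iii) avoids Gelfand theory. If $w\in\HH^1$ has no zeros, then $w$ is continuous and $|w|\ge c>0$, so $1/w$ is continuous. Moreover $(1/w)'=-w'/w^2\in L^2$, which gives $1/w\in\HH^1$. This direct route is less convenient for the disk and annulus cases, where the holomorphy of $1/w$ on $\D_r$ or $\A_r$ and its weighted-coefficient bounds would also have to be checked. So I would present the Gelfand argument uniformly for all four items.
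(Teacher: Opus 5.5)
Your proposal is correct and is essentially the paper's proof: Gelfand theory, with each character identified as evaluation at $\omega=\alpha(\mathcal{Z})$ and $\omega$ located in $\overline{\D_r}$, the closed annulus, $\T$, or $\overline{\D}$ by norm bounds on powers of the coordinate function. The paper writes out only (i)--(ii) and calls (iii)--(iv) well known. Your estimate $|\zeta|^{n}\le\|\mathcal{Z}^{n}\|$ with $n\to\pm\infty$ is slightly more robust than the paper's one-step bound $|\alpha(\mathcal{Z})|\le\|\mathcal{Z}\|_{\W^1_r}=r$: after taking $n$-th roots, any constant $C\ne 1$ in $\|fg\|\le C\|f\|\|g\|$ disappears, whereas such a constant would undermine the paper's claim that characters are contractions.
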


\noindent Note that Lemma \ref{lem:1/w} (iii) does not hold with $\HH^1$ replaced by $\HH^{1,+}$.
In fact, if $w(y):=e^{2\pi i y}$ then $w\in\HH^1$, $w$ does not vanish on $\T$,
an $1/w\in\HH^{1,-}$.

\begin{proof}[Proof of Lemma \ref{lem:1/w}]
The Lemma follows easily from the properties of commutative Banach algebras.
Recall that a bounded linear functional $\alpha : X\to\C$ on a Banach algebra $X$ is called
multiplicative if $\alpha(fg)=\alpha(f)\alpha(g)$ for any $f,g\in X$.
By Lemma \ref{lem:W-space} the space $\W^{1,+}_r$ is a commutative Banach algebra.
Let us prove item (i). Fix $r>1$ and assume that $w\in\W^{1,+}_r$ does not have zeros in
the closure of $\D_r$. 
Let $\alpha : \W^{1,+}_r\to\C$, $\alpha\ne 0$, be a multiplicative bounded linear functional
on $\W^{1,+}_r$. Then, $\alpha$ is a contraction, i.e.,
$|\alpha(f)|\le\|f\|_{\W^1_r}$ for any $f\in\W^{1,+}_r$.
By setting $\ZZ(z):=z$ for $z\in\D_r$ we then obtain from the definition of 
the norm \eqref{eq:W1-norm} that
$|\alpha(\ZZ)|\le\|\ZZ\|_{\W^1_r}=r$.
This implies that 
\begin{equation}\label{eq:alpha1}
\alpha(\ZZ)=\omega
\end{equation}
for some $\omega\in\C$ such that $|\omega|\le r$.
Any element $w\in\W^{1,+}_r$ can be represented as
$w=\sum_{k\ge 0}w_k\ZZ^k$
where, by the definition of the norm \eqref{eq:W1-norm}, the series converges 
absolutely in $\W^{1,+}_r$. We then obtain that
\[
\alpha(w)=\sum_{k\ge 0}w_k\alpha(\ZZ)^k=w(\omega).
\]
Hence, any non-identically equal to zero multiplicative bounded liner functional $\alpha$ on
$\W^{1,+}_r$ is of the form $\alpha(w)=w(\omega)$ for some $\omega$ in the closure of the disk $\D_r$. 
Then, the condition that $w\in\W^{1,+}_r$ does not have zeros in the closure of $\D_r$ implies that 
$\alpha(w)\ne 0$ for all non-identically equal to zero multiplicative bounded linear functionals 
on $\W^{1,+}_r$. By Gelfand's theory of commutative Banach algebras (\cite{Gelfand}), 
we then conclude that $w$ is an invertible element of $\W^{1,+}_r$, and hence $1/w\in\W^{1,+}_r$.
This proves item (i). Let us now prove (ii). 
Let $\alpha : \W^1_r\to\C$, $\alpha\ne 0$, be a multiplicative 
bounded linear functional on $\W^1_r$. By arguing in the the same way as in proof of (i) 
we then conclude that $\alpha(\ZZ)=\omega$ where $|\omega|\le r$. We have that
$\ZZ^{-1}\ZZ=1$ where $\ZZ^{-1}(z):=1/z$, $\ZZ^{-1}\in\W^1_r$.
This implies that $\alpha(\ZZ)=\omega\ne 0$ and that 
\begin{equation}\label{eq:alpha2}
\alpha(\ZZ^{-1})=\alpha(\ZZ)^{-1}=1/\omega\,.
\end{equation}
Since $\alpha$ is a contraction, we have that
$|\alpha(\ZZ^{-1})|\le\|\ZZ^{-1}\|_{\W^1_r}\le r$.
By combining this with \eqref{eq:alpha2} we then conclude that 
$|\omega|\ge 1/r$, and hence $1/r\le|\omega|\le r$.
For any element $w\in\W^1_r$ we have that 
$w=\sum_{k\ge 1}w_{-k}(\ZZ^{-1})^k+\sum_{k\ge 0} w_k\ZZ^k$. 
It then follows from \eqref{eq:alpha1} and \eqref{eq:alpha2} that
\begin{equation}\label{eq:alpha3}
\alpha(w)=\sum_{k\ge 1}w_{-k}\alpha(\ZZ^{-1})^k+
\sum_{k\ge 0} w_k\alpha(\ZZ)^k=\sum_{k\in\Z}w_k\omega^k=w(\omega).
\end{equation}
Hence, any non-identically equal to zero multiplicative bounded linear functional on 
$\W^1_r$ is of the form \eqref{eq:alpha3} for some $\omega$ in the closure of the annulus $\A_r$.
As in item (i), item (ii) then follows from Gelfand's theory of 
commutative Banach algebras. Items (iii) and (iv) are well-known and can be proved in 
the same way as (i).
\end{proof}

\begin{Remark}
Note that in the definition of a Banach algebra $(X,\|\cdot\|)$ we allow
the constant $C>0$ in the inequality $\|xy\|\le C\|x\|\|y\|$, $x,y\in X$, 
to be not necessarily equal to one. The results on commutative Banach algebras used
in the proof of Lemma \ref{lem:1/w} continue to hold under such an assumption.
\end{Remark}

\noindent In addition to the spaces above, we will also need the spaces of distributions on the torus
\[
\mathcal{D}^+:=\big\{f\in\mathcal{D}'(\T)\,\big|\, f_k=0\,\,\text{for}\,\,k<0\big\}
\quad\text{and}\quad
\mathcal{D}^+_0:=\big\{f\in\mathcal{D}'(\T)\,\big|\, f_k=0\,\,\text{for}\,\,k\le 0\big\}
\]
where $f_k$ are the Fourier coefficients of $f\in\mathcal{D}'(\T)$.

\section{A family of geodesic operators}\label{sec:the_operator}
In this Section we study the properties of a family of operators 
that plays a central a role in our construction. 
For a given $w,f\in\W^1_r$ and $\mu\in\A_r$ consider the expression
\begin{equation}\label{eq:A-operator}
[A(\mu,w)f](z):=\frac{1}{z-\mu}\big(w(z)f(\mu)-w(\mu)f(z)\big)
\end{equation}
for $z\in\A_r$. Note that for any given $w,f\in\W^1_r$ and $\mu\in\A_r$ the map
$z\mapsto[A(\mu,w)f](z)$, $\A_r\to\C$, is holomorphic and continuous on the closure of $\A_r$.
In this sense, for any given $w\in\W^1_r$ and $\mu\in\A_r$, 
we obtain a complex linear map, denoted $A(\mu,w)$, that assigns a holomorphic function 
$A(\mu,w)f$ on $\A_r$ to any given holomorphic function $f\in\W^1_r$.
The function $w$ will be called the {\em symbol} of the geodesic operator $A(\mu,w)$.
In the case when $f,w\in\W^{1,+}_r$ we will assume that $z,\mu\in\D_r$.
Then, the map $z\mapsto[A(\mu,w)f](z)$, $\D_r\to\C$, is holomorphic and
continuous on the closure of $\D_r$.
In the remainder of this Section we study the analytic properties of
the linear map $A(\mu,w)$.

For simplicity of notation, for $g\in\W^1_r$ and $z\in\A_r$ we
set $\Pi^\pm(g(z)):=[\Pi^{\pm}g](z)$ where $\Pi^\pm : \W^1_r\to\W^{1,\pm}_r$ are 
the (continuous) projections \eqref{eq:Pi}.
We have the following Lemma.

\begin{Lemma}\label{lem:A-operator}
Take $r>1$. Then we have:
\begin{itemize}
\item[(i)] For any given $w,f\in\W^1_r$ and $z\in\A_r$ the map
$\mu\mapsto[A(\mu,w)f](z)$, $\A_r\to\C$, is holomorphic, continuous 
on the closure of $\A_r$, and have Laurent coefficients
\begin{equation}\label{eq:A_k}
[A_k(w)f](z)=\left\{
\begin{array}{l}
f(z)\,\Pi^+(w(z)/z^{k+1})-w(z)\,\Pi^+(f(z)/z^{k+1}),\quad k\ge 0,\\
w(z)\,\Pi^-_0(f(z)\,z^{|k+1|})-f(z)\,\Pi^-_0(w(z)\,z^{|k+1|}),\quad k\le -1.
\end{array}
\right.
\end{equation}
In the case when $w,f\in\W^{1,+}_r$ the map $\mu\mapsto[A(\mu,w)f](z)$, 
$\D_r\to\C$, is holomorphic, continuous on the closure of $\D_r$, 
and the Laurent coefficients $[A_k(w)f](z)=0$ for any $z\in\D_r$ and $k\le -1$.
\item[(ii)]
For any given $k\in\Z$ and $w\in\W^1_r$ the linear operator 
$\W^1_r\to\W^1_r$, $f\mapsto A_k(w)f$, is well-defined and bounded.
Moreover, the map 
\begin{equation}\label{eq:A_k-map}
\W^1_r\to\LL(\W^1_r),\quad w\mapsto A_k(w),
\end{equation}
is analytic. The statement also holds with $\W^1_r$ replaced by $\W^{1,+}_r$.
\end{itemize}
\end{Lemma}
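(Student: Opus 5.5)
The plan is to prove (i) by expanding the difference quotient in $\mu$ term by term, and to get (ii) from the explicit formula \eqref{eq:A_k} together with the Banach algebra property and the continuity of the projections. The only subtle point is the location of the singularity at $z=\mu$.

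\textbf{Holomorphy in $\mu$.} Fix $z\in\A_r$ and set $F(\mu):=w(z)f(\mu)-w(\mu)f(z)$. This is holomorphic in $\mu$ on $\A_r$, continuous on the closure (Lemma \ref{lem:W-space}(iii)), and vanishes at $\mu=z$. Hence $F(\mu)/(z-\mu)$ has a removable singularity at $\mu=z$ and is holomorphic on $\A_r$. Continuity up to the boundary is clear away from $\mu=z$, and near $\mu=z$ it follows from the representation
\[
F(\mu)/(z-\mu)=\int_0^1 F'(z+t(\mu-z))\,dt,
\]
valid whenever the segment stays in $\A_r$. A cleaner route is to read continuity off the Laurent series obtained in the next step, since that series converges absolutely on the closed annulus.

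\textbf{Laurent coefficients.} I substitute $f(\mu)=\sum_n f_n\mu^n$ and $w(\mu)=\sum_n w_n\mu^n$ into $F$, which gives
\[
F(\mu)=\sum_n\big(w(z)f_n-f(z)w_n\big)\mu^n.
\]
I then use the identities
\[
\frac{\mu^n-z^n}{\mu-z}=\sum_{j=0}^{n-1}z^{n-1-j}\mu^j\quad (n\ge 1),\qquad
\frac{\mu^{-m}-z^{-m}}{\mu-z}=-\sum_{j=1}^{m}z^{-(m-j+1)}\mu^{-j}\quad (m\ge 1).
\]
Since $F(z)=0$, I can write $F(\mu)=\sum_n\big(w(z)f_n-f(z)w_n\big)(\mu^n-z^n)$. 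Dividing by $-(\mu-z)$ and collecting the coefficient of $\mu^k$:
\begin{itemize}
\item For $k\ge 0$ only $n\ge k+1$ contributes, giving $-\sum_{n\ge k+1}\big(w(z)f_n-f(z)w_n\big)z^{n-1-k}$. Since $\sum_{n\ge k+1}f_nz^{n-k-1}=\Pi^+\big(f(z)/z^{k+1}\big)$, this is exactly the first line of \eqref{eq:A_k}.
\item For $k\le -1$, writing $k=-j$, the contributing terms are $n=-m$ with $m\ge j$, with coefficient $z^{-(m-j+1)}$. This yields $\Pi^-_0\big(f(z)z^{|k+1|}\big)$ with the sign giving the second line.
\end{itemize}
The interchange of summations is justified by absolute convergence: the coefficients are $\ell^1$-weighted by $r^{|n|}$, and $|z|,|\mu|$ lie in the closed annulus. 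This same absolute convergence gives the continuity claimed above. For $w,f\in\W^{1,+}_r$ only $n\ge0$ terms appear, so all coefficients with $k\le-1$ vanish, and the argument on $\D_r$ is the same.

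\textbf{Part (ii).} Formula \eqref{eq:A_k} expresses $A_k(w)f$ through three operations:
\begin{itemize}
\item multiplication by $z^{\mp(k+1)}$, which is bounded on $\W^1_r$ with norm at most $C r^{|k+1|}\1 k+1\2$;
\item the continuous projections $\Pi^+,\Pi^-_0$ from \eqref{eq:Pi};
\item pointwise products, which are continuous by Lemma \ref{lem:W-space}(ii).
\end{itemize}
Hence $f\mapsto A_k(w)f$ is bounded. Moreover $w\mapsto A_k(w)$ is the sum of the two bounded linear maps $w\mapsto M_{\Pi^+(w\ZZ^{-k-1})}$ and $w\mapsto -M_w\Pi^+M_{\ZZ^{-k-1}}$ into $\LL(\W^1_r)$. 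A bounded linear map between Banach spaces is analytic, so the map \eqref{eq:A_k-map} is analytic.

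For $\W^{1,+}_r$, the algebra $\W^{1,+}_r$ is closed under products and $\Pi^+$ maps into it. Also, $\Pi^+(f/z^{k+1})$ is a series in nonnegative powers of $z$, so the image stays in $\W^{1,+}_r$.

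The main obstacle I expect is the bookkeeping of signs and index ranges in the Laurent expansion for $k\le-1$, which must be matched precisely with the $\Pi^-_0$ expression.
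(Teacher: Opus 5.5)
Your proposal is correct, and it reaches both parts by a different route than the paper.

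\textbf{Part (i).} The paper does not expand termwise. It integrates $F(\mu)\mu^{-k-1}/(z-\mu)$ over $|\mu|=r$ (resp.\ $|\mu|=1/r$ for $k\le-1$) and recognizes the Cauchy-integral representations $[\Pi^+g](z)=\frac{1}{2\pi i}\oint_{|\mu|=r}\frac{g(\mu)}{\mu-z}\,d\mu$ and the analogous one for $\Pi^-_0$. This gives \eqref{eq:A_k} in one line with no index bookkeeping. Your route writes $F(\mu)=\sum_n c_n(\mu^n-z^n)$ and uses the finite geometric identities. It is more elementary and checks out, including the signs and ranges for $k\le-1$. Its extra payoff is that the same estimate gives absolute convergence of the $\mu$-Laurent series on the closed annulus, so continuity up to the boundary comes for free.

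One precision point: that absolute convergence needs $z$ in the open annulus. For example, combine $\sum_{j=0}^{n-1}|z|^{n-1-j}r^{j}\le r^{n}/(r-|z|)$ with $\sum_n|f_n|r^{|n|}<\infty$, and argue symmetrically on the inner circle. If $|z|=r$ were allowed, you would only get the bound $n r^{n-1}$, and $\sum_n|f_n|\,n\,r^{n}$ need not converge for $f\in\W^1_r$. Since $z\in\A_r$ is fixed this is harmless, but your phrase ``$|z|,|\mu|$ lie in the closed annulus'' should be sharpened accordingly.

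\textbf{Part (ii).} You observe that $w\mapsto A_k(w)$ is linear and bounded into $\LL(\W^1_r)$, hence trivially analytic. This is simpler than the paper's argument via local boundedness plus weak analyticity (Kato's theorem). What the paper's version buys is the $k$-uniform bound $\|A_k(w)\|_{\LL(\W^1_r)}\le C\|w\|_{\W^1_r}$. It comes from $\|\Pi^+(g/z^{k+1})\|_{\W^1_r}\le\|g\|_{\W^1_r}$ and the analogous bound for $\Pi^-_0(g\,z^{|k+1|})$. This uniformity is exactly what is used next to sum $\sum_{k\ge0}A_k(w)\mu^k$ for $\mu\in\D$.

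Your bound $C r^{|k+1|}\1 k+1\2$ for multiplication by $z^{-(k+1)}$ grows exponentially in $k$ and would not support that step. If you want your proof to feed into the following Proposition, replace it with the direct observation that $g\mapsto\Pi^+(g/z^{k+1})$ is a contraction on $\W^1_r$.
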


\noindent Note that in view of Hartogs' theorem the map
$\A_r\times\A_r\to\C$, $(\mu,z)\mapsto [A(\mu,w)f](z)$,
is analytic for any given $w,f\in\W^1_r$. 

\begin{proof}[Proof of Lemma \ref{lem:A-operator}]
(i) For any given $w,f\in\W^1_r$ and $z\in\A_r$ the map
$\mu\mapsto w(z)f(\mu)-w(\mu)f(z)$, $\A_r\to\C$, is holomorphic, continuous on
$1/r\le|\mu|\le r$, and vanishes at $\mu=z$. This implies that the map 
\[
\mu\mapsto[A(\mu,w)f](z)=\frac{1}{z-\mu}\big(w(z)f(\mu)-w(\mu)f(z)\big),\quad\A_r\to\C,
\]
is holomorphic and continuous on $1/r\le|\mu|\le r$.
The case when $w,f\in\W^{1,+}_r$ follows in the same way.
Let us now compute the Laurent coefficients. For any $k\ge 0$ we have
\begin{align}\label{eq:A_k-derivation}
[A_k(w)f](z)&=\frac{1}{2\pi i}\oint_{|\mu|=r}
\frac{1}{z-\mu}\big(w(z)f(\mu)-w(\mu)f(z)\big)\,\frac{1}{\mu^{k+1}}\,d\mu\nonumber\\
&=f(z)\,\frac{1}{2\pi i}\oint_{|\mu|=r}\frac{1}{\mu-z}\frac{w(\mu)}{\mu^{k+1}}\,d\mu
-w(z)\,\frac{1}{2\pi i}\oint_{|\mu|=r}\frac{1}{\mu-z}\frac{f(\mu)}{\mu^{k+1}}\,d\mu\\
&=f(z)\,\Pi^+(w(z)/z^{k+1})-w(z)\,\Pi^+(f(z)/z^{k+1})\nonumber
\end{align}
where the cycles of integration are counterclockwise oriented and 
we used that\footnote{This is a direct consequence from the proof of 
the Laurent expansion theorem.}
\begin{equation}\label{eq:laurent_projection_+}
[\Pi^+g](z)=\frac{1}{2\pi i}\oint_{|\mu|=r}\frac{1}{\mu-z}g(\mu)\,d\mu
\end{equation}
for any $g\in\W^1_r$ and $z\in\A_r$.
The case when $k\le -1$ follows as in \eqref{eq:A_k-derivation} where we integrate
over the circle $|\mu|=1/r$ (oriented counterclockwise) and use that
\begin{equation}\label{eq:laurent_projection_-}
[\Pi^-_0g](z)=-\frac{1}{2\pi i}\oint_{|\mu|=1/r}\frac{1}{\mu-z}g(\mu)\,d\mu
\end{equation}
for any $g\in\W^1_r$ for $z\in\A_r$. This completes the proof of item (i).
The first statement in (ii) follows from the formulas for the Laurent coefficients
\eqref{eq:A_k}, the continuity of the projections \eqref{eq:Pi}, 
and the fact that $\W^1_r$ is a Banach algebra (Lemma \ref{lem:W-space} (ii)). 
In order to prove the second statement in (ii) we fix $k\ge 0$ and then use 
\eqref{eq:A_k} and $\|\Pi^+(g(z)/z^{k+1})\|_{W^1_r}\le\|g\|_{\W^1_r}$ 
to estimate
\begin{align}\label{eq:A_k-estimate}
\|A_k(w)f\|_{\W^1_r}&\le\big\|f\Pi^+(w(z)/z^{k+1})\big\|_{W^1_r}+
\big\|w\Pi^+(g(z)/z^{k+1})\big\|_{W^1_r}\nonumber\\
&\le C_0\big(\|f\|_{\W^1_r}\big\|\Pi^+(w(z)/z^{k+1})\big\|_{W^1_r}+
\|w\|_{\W^1_r}\big\|\Pi^+(f(z)/z^{k+1})\big\|_{W^1_r}\big)\nonumber\\
&\le C\|w\|_{\W^1_r}\|f\|_{\W^1_r},
\end{align}
where $C=2C_0>0$ is independent on the choice of $w,f\in\W^1_r$ and $k\in\Z$. 
A similar estimate shows that \eqref{eq:A_k-estimate} also holds for $k\le -1$.
This implies that $\|A_k(w)\|_{\LL(\W^1_r)}\le C\|w\|_{\W^1_r}$, and hence
the map \eqref{eq:A_k-map} is locally bounded. Since, by Lemma \ref{lem:W-space}  (ii),
for any given $f\in\W^1_r$ the map $w\mapsto A_k(w)f$, $\W^1_r\to\W^1_r$, is analytic 
(and hence, weakly analytic) we then conclude that the map \eqref{eq:A_k-map} is analytic 
(see, e.g., \cite[Theorem 3.12]{Kato}). 
This completes the proof of the Lemma in the case of the space $\W^1_r$.
The case of $\W^{1,+}_r$ then easily follows.
\end{proof}

It follows from Lemma \ref{lem:A-operator} that for any given $k\ge 0$ and 
$w\in\W^{1,+}_r$ the linear operator $\W^{1,+}_r\to\W^{1,+}_r$, $f\mapsto A_k(w)f$, 
is well-defined and bounded. (For $k\le -1$ these operators vanish.)
In addition, for any $k\ge 0$ the map
\[
\W^{1,+}_r\to\LL(\W^{1,+}_r),\quad w\mapsto A_k(w),
\]
is analytic. We have the following Proposition.

\begin{Proposition}\label{prop:A-operator}
Take $r>1$. Then we have:
\begin{itemize}
\item[(i)] For any given $\mu\in\D$ and $w\in\W^{1,+}_r$ the linear operator 
$\W^{1,+}_r\to\W^{1,+}_r$, $f\mapsto A(\mu,w)f$, where $A(\mu,w)f$ is given by the expression
\eqref{eq:A-operator}, is well-defined and bounded. Moreover, the map 
\[
\D\times\W^{1,+}_r\to\LL(\W^{1,+}_r),\quad (\mu,w)\mapsto A(\mu,w),
\]
is analytic and the series
\begin{equation}\label{eq:A-expansion(W)}
A(\mu,w)=\sum_{k\ge 0}A_k(w)\,\mu^k
\end{equation}
converges absolutely in $\LL(\W^{1,+}_r)$ uniformly on compact subsets of $\mu\in\D$
and uniformly on bounded sets of $w\in\W^{1,+}_r$.
\item[(ii)] Item (i) holds with $\W^{1,+}_r$ replaced by $\W^{1,+}_{r,0}$.
\end{itemize}
\end{Proposition}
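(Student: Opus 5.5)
The plan is to derive everything from the operator-norm bound on the Laurent coefficients $A_k(w)$ obtained in the proof of Lemma \ref{lem:A-operator}, namely $\|A_k(w)\|_{\LL(\W^1_r)}\le C\|w\|_{\W^1_r}$ with $C$ independent of $k$. This uniform bound is too weak to make $\sum_k A_k(w)\mu^k$ converge for $|\mu|<1$ with a rate, so I will first sharpen it using the weight $r^{2|k|}$ in \eqref{eq:W1-norm}. For $k\ge 0$ and $g\in\W^{1,+}_r$ we have $\Pi^+(g(z)/z^{k+1})=\sum_{j\ge k+1}g_j z^{j-k-1}$. Its norm squared is $\sum_{j\ge k+1}|g_j|^2 r^{2(j-k-1)}\1 j-k-1\2^2$, and since $\1 j-k-1\2\le\1 j\2$ this is at most $r^{-2(k+1)}\|g\|^2_{\W^1_r}$. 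Inserting this into \eqref{eq:A_k} together with the algebra estimate of Lemma \ref{lem:W-space} (ii) will give
\[
\|A_k(w)\|_{\LL(\W^{1,+}_r)}\le C\,r^{-(k+1)}\|w\|_{\W^1_r},\quad k\ge 0.
\]

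With this bound the series \eqref{eq:A-expansion(W)} converges absolutely in $\LL(\W^{1,+}_r)$ for $|\mu|<r$. In particular it converges uniformly for $\mu$ in compact subsets of $\D$ (indeed of $\D_r$) and for $w$ in bounded sets. Each term $(\mu,w)\mapsto A_k(w)\mu^k$ is analytic, because $w\mapsto A_k(w)$ is linear and bounded (in fact analytic by Lemma \ref{lem:A-operator} (ii)). A locally uniform limit of analytic maps between Banach spaces is analytic, so the sum $S(\mu,w):=\sum_{k\ge0}A_k(w)\mu^k$ is analytic on $\D\times\W^{1,+}_r$ with values in $\LL(\W^{1,+}_r)$.

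Next I identify $S(\mu,w)f$ with the expression \eqref{eq:A-operator}. By Lemma \ref{lem:A-operator} (i), for fixed $z\in\D_r$ the function $\mu\mapsto[A(\mu,w)f](z)$ is holomorphic on $\D_r$ and its Taylor coefficients are $[A_k(w)f](z)$. Hence $[A(\mu,w)f](z)=\sum_k [A_k(w)f](z)\mu^k$ for $|\mu|<r$. Point evaluation at $z$ is continuous on $\W^{1,+}_r$ by Lemma \ref{lem:W-space} (iii), and the series converges in norm, so this pointwise sum equals $[S(\mu,w)f](z)$. Thus $A(\mu,w)f=S(\mu,w)f\in\W^{1,+}_r$, which gives well-definedness and boundedness of $A(\mu,w)$, analyticity of $(\mu,w)\mapsto A(\mu,w)$, and the expansion, all at once.

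For item (ii), I note that if $f,w\in\W^{1,+}_{r,0}$ then $[A(\mu,w)f](0)=\big(w(0)f(\mu)-w(\mu)f(0)\big)/(-\mu)=0$. So $A(\mu,w)$ and each $A_k(w)$ preserve $\W^{1,+}_{r,0}$, and since this is a closed subspace carrying the restricted norm, the same estimates apply verbatim. The only step needing care is the sharpened decay bound on $A_k(w)$; the rest is standard.
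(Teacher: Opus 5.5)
Your proof is correct, and its skeleton is the same as the paper's: operator-norm bounds on the coefficients $A_k(w)$, summation of a geometric series in $\mu$, and analyticity as a locally uniform limit of analytic maps.

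One point needs correcting. Your motivating remark is false: the $k$-independent bound $\|A_k(w)\|_{\LL(\W^{1,+}_r)}\le C\|w\|_{\W^1_r}$ is \emph{not} too weak. For $|\mu|<1$ it gives $\sum_{k\ge0}C\|w\|_{\W^1_r}|\mu|^k=C\|w\|_{\W^1_r}/(1-|\mu|)$. This is exactly the paper's argument, and it already yields absolute convergence in $\LL(\W^{1,+}_r)$, uniformly on compact subsets of $\D$ and on bounded sets of $w$.

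Your sharpened estimate $\|A_k(w)\|\le C\,r^{-(k+1)}\|w\|_{\W^1_r}$ is valid, but more than the statement needs. It comes from keeping the factor $r^{-(k+1)}$ that the paper discards when it bounds $\|\Pi^+(g(z)/z^{k+1})\|_{\W^1_r}$ by $\|g\|_{\W^1_r}$. What it buys is convergence and analyticity on the larger disk $\D_r$, with bound $C\|w\|_{\W^1_r}/(r-|\mu|)$.

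Your writeup is more careful than the paper in two places:
\begin{itemize}
\item You identify the norm-convergent sum with the pointwise expression \eqref{eq:A-operator} via continuity of point evaluations (Lemma \ref{lem:W-space} (iii)). The paper skips this step when it applies the triangle inequality directly to $\|A(\mu,w)f\|_{\W^1_r}$.
\item You check that $A(\mu,w)$ and $A_k(w)$ preserve $\W^{1,+}_{r,0}$, which the paper leaves implicit for item (ii). Your formula for $[A(\mu,w)f](0)$ needs $\mu\ne0$; for $\mu=0$, simply note that $A(0,w)f=(f(0)\,w-w(0)\,f)/z=0$.
\end{itemize}
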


\begin{Remark}
We note that, by construction, for any given $w,f\in\W^{1,+}_r$ and $\mu,z\in\D_r$ the expression
$[A(\mu,w)f](z)$ is well-defined and $[A(\mu,w)f](z)=\sum_{k\ge 0}[A_k(w)f](z)\,\mu^k$
where the series converges absolutely in $\C$ on compact subsets of $z\in\D_r$.
Item (i) of Proposition \ref{prop:A-operator} means that the convergence is,
in fact, stronger for $\mu$ in the unit disk $\D\subseteq\D_r$.
\end{Remark}

\begin{proof}[Proof of Proposition \ref{prop:A-operator}]
The Corollary follows from the fact that the constant $C>0$ in \eqref{eq:A_k-estimate}
is independent on the choice of $k\ge\Z$.
In fact, for any given $\mu\in\D$, $w\in\W^{1,+}_r$, and $z\in\D_r$, we have that
$[A(\mu,w)f](z)=\sum_{k\ge 0}[A_k(w)f](z)\,\mu^k$ where the series converges absolutely in $\C$. 
This and \eqref{eq:A_k-estimate} then imply that
\[
\|A(\mu,w)f\|_{\W^1_r}\le\sum_{k\ge 0}\|A_k(w)f\|_{\W^1_r}|\mu|^k\le
\sum_{k\ge 0}C\|w\|_{\W^1_r}\|f\|_{\W^1_r}|\mu|^k\le
\left(\frac{C\|w\|_{\W^1_r}}{1-|\mu|}\right)\|f\|_{\W^1_r}
\]
and hence the series \eqref{eq:A-expansion(W)} converges absolutely in $\LL(\W^{1,+}_r)$ 
uniformly on compact subsets of $\mu\in\D$ and uniformly on bounded sets of $w\in\W^{1,+}_r$.
In view of Lemma \ref{lem:A-operator} (ii), the limit operator $A(\mu,w)\in\LL(\W^1_r)$ depends
analytically on $\mu\in\D$ and $w\in\W^1_r$.
\end{proof}

\medskip

In order to define the analog of the operator
\eqref{eq:A-operator} on $\HH^1$ we introduce the left and the right shift on $\HH^1$,
\begin{equation}\label{eq:L,R}
[L(f)](y):=f(y)\,e^{-2\pi i y},\quad
[R(f)](y):=f(y)\,e^{2\pi i y}\,.
\end{equation}
It is clear that $L : \HH^1\to\HH^1$ is a linear isomorphism of Banach spaces and that
$R : \HH^1\to\HH^1$ is its inverse. In view of \eqref{eq:A_k},
for $w\in\HH^1$ and $k\in\Z$ we set
\begin{equation}\label{eq:A_k(H)}
A_k(w)f:=
\left\{
\begin{array}{l}
f\,\Pi^+\big(L^{k+1}w\big)-w\,\Pi^+\big(L^{k+1}f\big),\quad k\ge 0,\\
w\,\Pi^-_0\big(R^{|k+1|}f\big)-f\,\Pi^-_0\big(R^{|k+1|}w\big),\quad k\le -1.
\end{array}
\right.
\end{equation}
By using the identification \eqref{eq:f<->f(z)} and arguing as in the proof of
Proposition \ref{prop:A-operator} we obtain the following Proposition.

\begin{Proposition}\label{prop:A-operator(H)}
\begin{itemize}
\item [(i)] For any given $k\in\Z$ and $w\in\HH^1$ the linear operator $\HH^1\to\HH^1$,
$f\mapsto A_k(w)f$, is well-defined and continuous. Moreover, the map
$\HH^1\to\LL(\HH^1)$, $w\mapsto A_k(w)$, is analytic.
\item[(ii)] For any given $k\in\Z$ and $w\in\HH^{1,+}$ the linear operator $\HH^{1,+}\to\HH^{1,+}$,
$f\mapsto A_k(w)f$, is well-defined and continuous. The map 
$\HH^{1,+}\to\LL(\HH^{1,+})$, $w\mapsto A_k(w)$, is analytic.
\item[(iii)] For any given $\mu\in\D$ and $w\in\HH^{1,+}$ the linear operator
$\HH^{1,+}\to\HH^{1,+}$, $f\mapsto A(\mu,w)f$, where $A(\mu,w)f$ is given by
\eqref{eq:A-operator}, is well-defined and bounded.\footnote{In view of \eqref{eq:f<->f(z)}
that we identify the elements of $\HH^{1,+}$ with holomorphic functions on $\D$.} 
Moreover, the map
\[
\D\times\HH^{1,+}\to\LL(\HH^{1,+}),\quad w\mapsto A(\mu,w),
\]
is analytic and  the series 
\begin{equation}\label{eq:A-expansion(H)}
A(\mu,w)=\sum_{k\ge 0} A_k(w)\,\mu^k
\end{equation}
converges absolutely in $\LL(\HH^{1,+})$ uniformly on compact subsets of $\mu\in\D$ and 
uniformly on bounded sets of $w\in\HH^{1,+}$. In particular, the map
\[
\D\times\HH^{1,+}\to\LL(\HH^{1,+}),\quad(\mu,w)\mapsto A(\mu,w),
\]
is analytic.
\item[(iv)] Items (ii) and (iii) hold with $\HH^{1,+}$ replaced by $\HH^{1,+}_0$.
\end{itemize}
\end{Proposition}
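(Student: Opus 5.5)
The plan is to follow the proof of Lemma \ref{lem:A-operator} and Proposition \ref{prop:A-operator}. In $\HH^1$ the shifts $L$ and $R$ play the role of division and multiplication by $z$.

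\textbf{Item (i): the key uniform bound.} First I note that $L$ and $R$ are isometries of $L^2$, but on $\HH^1$ they only shift Fourier indices. Hence $L^{k+1}$ alone is not uniformly bounded in $k$. Composed with the projection, however, it is: for $k\ge 0$,
\[
\big(\Pi^+L^{k+1}g\big)_j=g_{j+k+1},\quad j\ge 0,
\]
and $\langle j\rangle\le\langle j+k+1\rangle$. Therefore
\[
\|\Pi^+L^{k+1}g\|_{\HH^1}\le\|g\|_{\HH^1}
\]
with constant $1$, independent of $k$. The same argument applies to $\Pi^-_0R^{|k+1|}$ for $k\le -1$, since the surviving indices $j\le -1$ come from indices $j-|k+1|$ of larger modulus. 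Combining this with the Banach algebra inequality $\|fg\|_{\HH^1}\le C_0\|f\|\|g\|$ in \eqref{eq:A_k(H)} gives
\[
\|A_k(w)f\|_{\HH^1}\le 2C_0\|w\|_{\HH^1}\|f\|_{\HH^1}.
\]
So $A_k(w)$ is bounded, and $w\mapsto A_k(w)$ is linear and bounded, hence analytic. (Alternatively, it is locally bounded and weakly analytic, and \cite[Theorem 3.12]{Kato} applies.)

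\textbf{Item (ii).} If $w,f\in\HH^{1,+}$, then for $k\ge 0$ each term in \eqref{eq:A_k(H)} is a product of elements of the subalgebra $\HH^{1,+}$. Hence $A_k(w)$ preserves $\HH^{1,+}$. For $k\le -1$, $R^{|k+1|}f$ has only nonnegative modes, so the $\Pi^-_0$ terms vanish and $A_k(w)=0$. The same computation works for $\HH^{1,+}_0$, which gives the first part of item (iv).

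\textbf{Item (iii): identifying the series with \eqref{eq:A-operator}.} I would show, for $w,f\in\HH^{1,+}$ and $\mu,z\in\D$, that
\[
[A(\mu,w)f](z)=\sum_{k\ge0}[A_k(w)f](z)\mu^k.
\]
The cleanest route is a pointwise computation. For $g\in\HH^{1,+}$, the function $\Pi^+L^{k+1}g$ is the holomorphic function $(g(z)-\sum_{j\le k}g_jz^j)/z^{k+1}$. Hence
\[
\sum_{k\ge0}\mu^k\,\Pi^+(L^{k+1}g)(z)=\sum_{n\ge1}g_n\,\frac{z^n-\mu^n}{z-\mu}=\frac{g(z)-g(\mu)}{z-\mu}.
\]
Substituting $g=w$ and $g=f$ gives
\[
f(z)\frac{w(z)-w(\mu)}{z-\mu}-w(z)\frac{f(z)-f(\mu)}{z-\mu}=\frac{w(z)f(\mu)-w(\mu)f(z)}{z-\mu},
\]
which is exactly \eqref{eq:A-operator}.

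The uniform bound from item (i) gives $\sum_k\|A_k(w)\|\,|\mu|^k\le 2C_0\|w\|/(1-|\mu|)$. So the series converges absolutely in $\LL(\HH^{1,+})$, uniformly for $|\mu|\le\rho<1$ and $w$ in bounded sets. Convergence in $\HH^{1,+}$ implies pointwise convergence on $\D$, so the limit coincides with $A(\mu,w)$. In particular, $A(\mu,w)$ is a well-defined bounded operator.

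Analyticity in $(\mu,w)$ follows because each term $\mu^kA_k(w)$ is analytic (in fact bilinear-polynomial) and the convergence is locally uniform; a locally uniform limit of analytic maps into a Banach space is analytic. The same argument on $\HH^{1,+}_0$ completes item (iv).

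\textbf{Main obstacle.} The only genuinely delicate point is the $k$-independence of the bound in item (i). This is exactly what makes the geometric series in item (iii) converge on all of $\D$. It is handled by the index-monotonicity $\langle j\rangle\le\langle j+k+1\rangle$, which is why $\Pi^+L^{k+1}$ has operator norm at most $1$ for every $k$.
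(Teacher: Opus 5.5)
Your proposal is correct and follows essentially the paper's route; the paper just says to argue as in Lemma~\ref{lem:A-operator} and Proposition~\ref{prop:A-operator}, where the $k$-independent bound $\|\Pi^+L^{k+1}g\|_{\HH^1}\le\|g\|_{\HH^1}$ (and its $\Pi^-_0R^{|k+1|}$ analog) combined with the Banach algebra property gives $\|A_k(w)\|\le C\|w\|_{\HH^1}$, and the geometric series in $\mu$ then yields (iii). Your differences are only cleaner versions of the same steps: you prove that bound explicitly via Fourier indices, you identify $\sum_k A_k(w)\mu^k$ with \eqref{eq:A-operator} by summing directly rather than by the paper's Cauchy-integral computation of Laurent coefficients, and you note that $w\mapsto A_k(w)$ is bounded linear, which makes the appeal to Kato's theorem unnecessary.
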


Recall that for any given $w,f\in\W^{1,+}_r$ and $\mu\in\D_r$ the map 
$z\mapsto[A(\mu,w)f](z)$, $\D_r\to\C$, is holomorphic and continuous on 
the closure of $\D_r$. We have the following Proposition. 

\begin{Proposition}\label{prop:A-properties}
Take $r>1$. Then, the following holds:
\begin{itemize}
\item[(i)] For any given $w\in\W^{1,+}_{r,0}$ and $\mu\in\D_r$ we have that
\begin{equation}\label{eq:A(mu,w)-symmetric}
\big\1 A(\mu,w)\big(\Pi^+_0(w\check{f})\big),g\big\2=
\frac{1}{\mu}\Big([\Pi^+_0(w\check{f})](\mu)\,[\Pi^+_0(w\check{g})](\mu)
-w(\mu)[\Pi^+_0(w\check{g}\check{f})](\mu)\Big)
\end{equation}
for any $f,g\in\W^{1,+}_{r,0}$.\footnote{Note that the right side of \eqref{eq:A(mu,w)-symmetric} 
has no singularity at $\mu=0$.} In particular, we have that
\[
\big\1 A(\mu,w)\big(\Pi^+_0(w\check{f})\big),g\big\2=
\big\1 A(\mu,w)\big(\Pi^+_0(w\check{g})\big),f\big\2
\]
for any $w,f,g\in\W^{1,+}_{r,0}$ and $\mu\in\D_r$.
\item[(ii)] For any given $w\in\W^1_r$ and $\mu,\nu\in\D_r$ we have that
\[
A(\mu,w)\,A(\nu,w)f=A(\nu,w)\,A(\mu,w)f,\quad\forall f\in\W^1_r\,.
\]
\item[(iii)] Items (i) and (ii) hold with $\W^{1,+}_{r,0}$ replaced by $\HH^{1,+}_0$.
\end{itemize}
\end{Proposition}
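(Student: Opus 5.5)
The plan is to prove (i) and (ii) by direct computation with the explicit formula \eqref{eq:A-operator}, and then obtain (iii) by density.

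For (i), fix $w,f,g\in\W^{1,+}_{r,0}$ and set $h:=\Pi^+_0(w\check{f})\in\W^{1,+}_{r,0}$. I would first take $|\mu|<1$ and use the contour representation \eqref{eq:L2-pairing(W)}:
\[
\big\1 A(\mu,w)h,g\big\2=\frac{1}{2\pi i}\oint_{|z|=1}\big(w(z)h(\mu)-w(\mu)h(z)\big)\check{g}(z)\,\frac{dz}{z(z-\mu)}.
\]
The partial fraction decomposition $\frac{1}{z(z-\mu)}=\frac1\mu\big(\frac{1}{z-\mu}-\frac1z\big)$ splits this integral. For $F\in\W^1_r$ one has $\frac{1}{2\pi i}\oint_{|z|=1}\frac{F(z)}{z-\mu}dz=[\Pi^+F](\mu)$, by \eqref{eq:laurent_projection_+} with the contour shrunk to $|z|=1$, which is legitimate since $|\mu|<1$. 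One also has $\frac{1}{2\pi i}\oint F\,\frac{dz}{z}=F_0$. Applying these to $F=w\check g$ and $F=h\check g$ gives
\[
\big\1 A(\mu,w)h,g\big\2=\frac1\mu\Big(h(\mu)\,[\Pi^+_0(w\check g)](\mu)-w(\mu)\,[\Pi^+_0(h\check g)](\mu)\Big).
\]
It remains to replace $\Pi^+_0(h\check g)$ by $\Pi^+_0(w\check g\check f)$. Write $h=w\check f-\Pi^-(w\check f)$. Since $g\in\W^{1,+}_{r,0}$, the function $\check g$ has only negative Fourier modes, so $\Pi^-(w\check f)\,\check g$ has only nonpositive modes and is annihilated by $\Pi^+_0$. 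This yields \eqref{eq:A(mu,w)-symmetric} for $|\mu|<1$.

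To pass to all $\mu\in\D_r$, observe that both sides are holomorphic in $\mu\in\D_r$. The left side is holomorphic because $\mu\mapsto A(\mu,w)h$ is holomorphic, by Lemma \ref{lem:A-operator}. On the right side, the bracket vanishes at $\mu=0$ since $h(0)=0=w(0)$, so the singularity at $\mu=0$ is removable. The identity theorem then extends the formula to $\D_r$. The symmetry in $f,g$ is immediate from the right-hand side, because $\check g\check f=\check f\check g$.

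For (ii), I would compute explicitly. With $B_\nu(z):=[A(\nu,w)f](z)=\frac{w(z)f(\nu)-w(\nu)f(z)}{z-\nu}$ we have $B_\nu(\mu)=\frac{w(\mu)f(\nu)-w(\nu)f(\mu)}{\mu-\nu}$. Then
\[
[A(\mu,w)A(\nu,w)f](z)=\frac{w(z)B_\nu(\mu)-w(\mu)B_\nu(z)}{z-\mu}.
\]
Expanding this, collecting the coefficients of $f(z)$, $f(\mu)$ and $f(\nu)$, and using partial fractions in the three distinct points $z,\mu,\nu$, each coefficient should come out symmetric under $\mu\leftrightarrow\nu$. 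For example, the coefficient of $f(z)$ is $\frac{w(\mu)w(\nu)}{(z-\mu)(z-\nu)}$. The case where some of the points coincide then follows by continuity, since everything is holomorphic. I expect this bookkeeping to be the main, though routine, obstacle. The fallback is to verify the equivalent identity $A_kA_l=A_lA_k$ on Laurent coefficients, but the direct symmetric-coefficient check should suffice.

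For (iii), I would use that $\W^{1,+}_{r,0}$ is dense in $\HH^{1,+}_0$ and $\W^1_r$ is dense in $\HH^1$. We restrict to $\mu\in\D$, where Proposition \ref{prop:A-operator(H)} gives continuity of $(\mu,w)\mapsto A(\mu,w)$ into $\LL(\HH^{1,+}_0)$. The pairing, the projections, multiplication in $\HH^1$, and point evaluation at $\mu\in\D$ are all continuous on $\HH^{1,+}$. Hence both sides of the identities in (i) and (ii) are continuous in $(w,f,g)$, and they extend from the dense subspaces.
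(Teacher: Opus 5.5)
Your proposal is correct and follows essentially the paper's proof: for (i) the same contour-integral computation via the Cauchy formula for $\Pi^+$, together with the observation that $\Pi^-(w\check{f})\,\check{g}$ is annihilated by $\Pi^+_0$, and for (ii) the direct pointwise computation. The differences are minor. You integrate over $|z|=1$ and extend from $0<|\mu|<1$ to $\D_r$ by the identity theorem, whereas the paper integrates over $|z|=r$ and needs holomorphy only at $\mu=0$. You obtain (iii) by density of $\W^{1,+}_{r,0}$ in $\HH^{1,+}_0$ and continuity in $(w,f,g)$, whereas the paper repeats the argument for holomorphic functions on $\D$; both routes are valid.
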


By combining Proposition \ref{prop:A-properties} with the expansions \eqref{eq:A-expansion(W)} and
\eqref{eq:A-expansion(H)} in Proposition \ref{prop:A-operator} (i) and 
Proposition \ref{prop:A-operator(H)} (iii) we obtain the following Proposition.

\begin{Proposition}\label{prop:A-properties_k}
Take $r>1$. Then, the following holds:
\begin{itemize}
\item[(i)] For any $w,f,g\in\W^{1,+}_{r,0}$ we have that
\[
\big\1 A_k(w)\big(\Pi^+_0(w\check{f})\big),g\big\2=
\big\1 A_k(w)\big(\Pi^+_0(w\check{g})\big),f\big\2
\]
for any $k\ge 0$.
\item[(ii)] For any given $w\in\W^{1,+}_{r,0}$ and $k,l\ge 0$ the operators 
$A_k(w), A_l(w)\in\LL(\W^{1,+}_{r,0})$ commute.
\item[(iii)] Items (i) and (ii) hold with $\W^{1,+}_{r,0}$ replaced by $\HH^{1,+}_0$.
\end{itemize}
\end{Proposition}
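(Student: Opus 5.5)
The plan is to deduce Proposition \ref{prop:A-properties_k} from Proposition \ref{prop:A-properties} by expanding both sides of the identities there in powers of $\mu$ (and $\nu$) and comparing Taylor coefficients. The expansion is justified by \eqref{eq:A-expansion(W)} and \eqref{eq:A-expansion(H)}.

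\emph{Item (i).} Fix $w,f,g\in\W^{1,+}_{r,0}$ and set $h:=\Pi^+_0(w\check f)\in\W^{1,+}_{r,0}$. By Proposition \ref{prop:A-operator} (ii), the series $A(\mu,w)=\sum_{k\ge0}A_k(w)\mu^k$ converges absolutely in $\LL(\W^{1,+}_{r,0})$, uniformly for $\mu$ in compact subsets of $\D$. The bilinear form \eqref{eq:L2-pairing} is continuous on $\W^1_r$. Hence
\[
\mu\mapsto\big\1 A(\mu,w)h,g\big\2=\sum_{k\ge0}\big\1 A_k(w)h,g\big\2\mu^k
\]
is holomorphic on $\D$ and is given by this convergent power series. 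The same holds with the roles of $f$ and $g$ exchanged. By Proposition \ref{prop:A-properties} (i), the two holomorphic functions coincide for all $\mu\in\D\subseteq\D_r$. Uniqueness of Taylor coefficients at $\mu=0$ then gives the identity of item (i) for every $k\ge0$.

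\emph{Item (ii).} Fix $f\in\W^{1,+}_{r,0}$. For $\mu,\nu\in\D$, composition is continuous on $\LL(\W^{1,+}_{r,0})$ and both series converge absolutely in operator norm. Therefore
\[
A(\mu,w)A(\nu,w)=\sum_{k,l\ge0}A_k(w)A_l(w)\mu^k\nu^l,
\]
a double power series converging absolutely in $\LL(\W^{1,+}_{r,0})$ on $\D\times\D$, and similarly for the reversed product. Proposition \ref{prop:A-properties} (ii), restricted to $\W^{1,+}_{r,0}\subseteq\W^1_r$, shows that the two operator-valued functions agree on $\D\times\D$. Applying an arbitrary bounded linear functional, or simply evaluating on $f$ and taking the $\W^1_r$-valued coefficients, the uniqueness of coefficients of convergent double power series gives $A_k(w)A_l(w)f=A_l(w)A_k(w)f$. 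This holds for all $f$, so the operators commute.

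\emph{Item (iii).} The same argument applies verbatim on $\HH^{1,+}_0$. It uses Proposition \ref{prop:A-operator(H)} (iii)--(iv) for the absolutely convergent expansion in $\LL(\HH^{1,+}_0)$, the continuity of the pairing \eqref{eq:L2-pairing(H)} on $\HH^1$, and Proposition \ref{prop:A-properties} (iii).

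The only point needing care is the interchange of summation with the pairing and with composition. This is where the uniform operator-norm convergence on compact subsets of $\D$ from Proposition \ref{prop:A-operator} (ii) and Proposition \ref{prop:A-operator(H)} (iv) is used. Once that is in place, everything else is comparison of Taylor coefficients, so I do not expect a genuine obstacle.
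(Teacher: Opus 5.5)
Your proposal is correct and follows the paper's route. The paper obtains Proposition \ref{prop:A-properties_k} in exactly this way: it combines Proposition \ref{prop:A-properties} with the absolutely convergent expansions \eqref{eq:A-expansion(W)} and \eqref{eq:A-expansion(H)} and compares coefficients in $\mu$ (and $\nu$), and your argument that the series may be interchanged with the pairing and with composition (via the uniform operator-norm bounds) simply spells out what the paper leaves implicit.
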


\begin{proof}[Proof of Proposition \ref{prop:A-properties}]
(i) For any $w,f,g\in\W^{1,+}_{r,0}$ and $\mu\in\D_r$, $\mu\ne 0$, with $r>1$ we have
\begin{align*}
\big\1 A(\mu,w)\big(\Pi^+_0(w\check{f})\big),g\big\2&=
\frac{1}{2\pi i}\oint_{|z|=r}
\frac{w(z)[\Pi^+_0(w\check{f})](\mu)-w(\mu)[\Pi^+_0(w\check{f})](z)}{z-\mu}\,
\frac{\check{g}(z)}{z}\,dz\\
&=[\Pi^+_0(w\check{f})](\mu)\,\Pi^+\big(w(\mu)\check{g}(\mu)/\mu\big)
-w(\mu)\Pi^+\big(\check{g}(\mu)\,[\Pi^+_0(w\check{f})](\mu)/\mu\big)\\
&=[\Pi^+_0(w\check{f})](\mu)\,\Pi^+\big(w(\mu)\check{g}(\mu)/\mu\big)
-w(\mu)\Pi^+\big(w(\mu)\check{f}(\mu)\check{g}(\mu)/\mu\big)\\
&=\frac{1}{\mu}\Big([\Pi^+_0(w\check{f})](\mu)\,[\Pi^+_0(w\check{g})](\mu)
-w(\mu)[\Pi^+_0(w\check{f}\check{g})](\mu)\Big)
\end{align*}
where we used \eqref{eq:laurent_projection_+}, the fact that
\[
\Pi^+\big(\check{g}(\mu)\,[\Pi^+_0(w\check{f})](\mu)/\mu\big)
=\Pi^+\big(\check{g}(\mu)\,\big[w\check{f}-\Pi^-\big(w\check{f})\big](\mu)/\mu\big)
=\Pi^+\big(w(\mu)\check{f}(\mu)\check{g}(\mu)/\mu\big),
\]
and the identity $\Pi^+(f(\mu)/\mu)=[\Pi^+_0 f](\mu)/\mu$.
Since by Proposition \ref{prop:A-operator} (i), the both sides of \eqref{eq:A(mu,w)-symmetric}
are holomorphic in $\mu\in\D$, the equality \eqref{eq:A(mu,w)-symmetric} holds also for $\mu=0$.
This proves item (i). Item (ii) is proved by direct computation.
The case of the space $\HH^{1,+}_0$ follows by identifying the elements 
of $\HH^{1,+}_0$ with holomorphic functions on $\D$ and then arguing as 
in the case of $\W^{1,+}_{r,0}$.
\end{proof}

Let us also record the property
\begin{equation*}
A_k(w)\,f=-A_k(f)\,w
\quad\text{and}\quad A(\mu,w)\,f=-A(\mu,f)\,w
\end{equation*}
for any $\mu\in\D$, $w,f\in\HH^{1,+}$, and $k\ge 0$.
This property follows from \eqref{eq:A_k(H)} and \eqref{eq:A-expansion(H)}.
For any given $k\ge 1$ and $w\in\HH^{1,+}_0$ we will also need the adjoint operator to
$A_k(w) : \HH^{1,+}_0\to\HH^{1,+}_0$ with respect to the pairing \eqref{eq:L^2-metric(H0)},
defined by the property that
\begin{equation}\label{eq:A_k-adjoint}
\1 A_k(w) f,g\2=\1 f, A_k(w)^T g\2
\end{equation}
for any $f,g\in\HH^{1,+}_0$.

\section{The Poisson structure and the phase space}\label{sec:phase_space}
In this Section we define a Poisson bracket on the phase space $\HH^{1,+}_0\times\HH^{1,+}_0$
and discuss several basic properties of the Hamiltonian function \eqref{eq:H-introduction} defined
in the Introduction. Consider the Sobolev space $\HH^1$ and the corresponding Hardy space
\[
\HH^{1,+}_0=\big\{f\in\HH^1\,\big|\,f_k=0\,\,\text{for}\,\,k\le 0\big\}\subseteq\HH^1
\]
equipped with the restriction of the scalar product \eqref{eq:H1-norm}.
The restriction of the bilinear form \eqref{eq:L2-pairing(H)} to 
$\HH^{1,+}_0$ equals
\begin{equation}\label{eq:L^2-metric(H0)}
\1 w,p\2=\sum_{k\ge 1} w_k p_k=\int_0^1 w(y)\check{p}(y)\,dy,\quad w,p\in\HH^{1,+}_0,
\end{equation}
where $\check{u}(y)=u(-y)$, $y\in\T$. In what follows we will use the same notation
for the various restrictions of linear operators and forms to linear subspaces.

\medskip

\noindent{\em The Poisson bracket.}
For any $C^1$ functions $F,G : \HH^{1,+}_0\times\HH^{1,+}_0\to\C$ consider the following
Zakharov-Shabat-type Poisson bracket
\begin{equation}\label{eq:Poisson_bracket}
\{F,G\}:=dG(X_F):=\1\nabla_p F,\nabla_w G\2-\1\nabla_w F,\nabla_p G\2
\end{equation}
where $dG$ is the differential of $G$ and the gradients $\nabla_w G,\nabla_p G\in\mathcal{D}^+_0$ are 
uniquely defined (by Riesz representation theorem) so that
\[
\1\nabla_w G,\delta w\2=\big(D_1 G\big)(\delta w)\quad\text{\rm and}
\quad \1\nabla_p G,\delta p\2=\big(D_2 G\big)(\delta p)
\]
hold for any $\delta w, \delta p\in\HH^{1,+}_0$, where $D_1G$ and $D_2G$ denote
the directional partial derivatives of $G$ in the direction of the first, respectively, 
the second variable, and $\1\cdot,\cdot\2$ is the bilinear form \eqref{eq:L^2-metric(H0)}.
For any $C^1$ function $F : \HH^{1,+}_0\times\HH^{1,+}_0\to\C$ consider the Hamiltonian ``vector field''
\begin{equation}\label{eq:X_F}
X_F:=\big(\nabla_p F,-\nabla_w F\big)\,.
\end{equation}
Note that, in general, the two components of \eqref{eq:X_F} are elements of $\mathcal{D}^+_0$, 
and hence, the bracket \eqref{eq:Poisson_bracket} is not always defined.

As in the Introduction, consider the Hamiltonian 
\begin{equation}\label{eq:Hamiltonian}
H(w,p)=\1 w,p^2\2=\int_0^1\check{w}(y)\,p(y)^2\,dy,\quad w,p\in\HH^{1,+}_0 .
\end{equation}
Since $\HH^{1,+}_0$ is a Banach algebra, the Hamiltonian function 
$H : \HH^{1,+}_0\times\HH^{1,+}_0\to\C$ is well-defined and analytic. 
Let us derive the Hamiltonian equation of \eqref{eq:Hamiltonian}. 
For any two variations $\delta w,\delta p\in\HH^{1,+}_0$ we obtain from
\eqref{eq:Hamiltonian} and \eqref{eq:check-move} that
\[
\big(D_2H\big)(\delta p)=\1 w,2 p\,\delta p\2=2\1 w\check{p},\delta p\2
\quad\text{and}\quad
\big(D_1H\big)(\delta w)=\1 p^2,\delta w\2\,.
\]
This implies that 
\begin{equation}\label{eq:H-gradients}
\nabla_w H=p^2
\quad\text{\rm and}\quad
\nabla_p H=2\Pi^+_0(w\check{p})
\end{equation}
where $\Pi^+_0 : \HH^1\to\HH^{1,+}_0$ is the continuous projection onto $\HH^{1,+}_0$
(see \eqref{eq:Pi(H)}). In particular, we see that the Hamiltonian equation 
corresponding to \eqref{eq:Hamiltonian} is
\begin{equation}\label{eq:X_H}
\dot{w}=\nabla_p H=2\Pi^+_0(w\check{p}),\quad\dot{p}=-\nabla_w H=-p^2,
\end{equation}
where the dot denotes the time derivative. It then follows from \eqref{eq:X_H} that
the map (see \eqref{eq:X_F})
\[
X_H : \HH^{1,+}_0\times\HH^{1,+}_0\to\HH^{1,+}_0\times\HH^{1,+}_0
\]
is well-defined and analytic. Hence, \eqref{eq:X_H} is an ordinary differential equation (ODE) 
in $\HH^{1,+}_0$. One easily sees that \eqref{eq:X_H} can be explicitly solved.

\begin{Remark}\label{rem:mechanics}
Borrowing terminology from the classical mechanics, we will think of 
the Hamiltonian \eqref{eq:Hamiltonian} as the kinetic energy of a moving
point with configuration space $\HH^{1,+}_0$ and a phase space $\HH^{1,+}_0\times\HH^{1,+}_0$ 
equipped with the Poisson bracket \eqref{eq:Poisson_bracket}. 
The phase space is coordinatized by the variables $(w,p)\in\HH^{1,+}_0\times\HH^{1,+}_0$ where
$w\in\HH^{1,+}_0$ represents the position of the point and $p\in\HH^{1,+}_0$ is its impulse. 
The equation \eqref{eq:X_H} is the geodesics equation of $H$ in $\HH^{1,+}_0\times\HH^{1,+}_0$. 
We will see later that $H$ can be interpreted as an infinite dimensional analog of 
a Riemannian metric on $\R^n$ with a completely integrable geodesic flow.
\end{Remark}

In view of \eqref{eq:check-move} the Hamiltonian \eqref{eq:Hamiltonian} 
can be written as
\begin{equation}\label{eq:H_Hankel_form}
H(w,p)=\1 \Gamma(w)\,p,p\2
\end{equation}
where $w,p\in\HH^{1,+}_0$ and $\Gamma(w)$ is the Hankel operator \eqref{eq:Hankel-introduction}.
The arguments in the proof of Proposition \ref{prop:A-operator(H)}
show that the map
\begin{equation}\label{eq:w->H(w)-analytic}
\HH^{1,+}_0\to\LL(\HH^{1,+}_0),\quad w\mapsto\Gamma(w),
\end{equation}
is analytic. In view of Remark \ref{rem:mechanics}, the operator 
\eqref{eq:Hankel-introduction} is the ``inertia tensor'' of the Hamiltonian \eqref{eq:Hamiltonian}.
It follows easily from the Banach algebra property of $\HH^{1,+}_0$ that
the operator \eqref{eq:Hankel-introduction} is compact for any given $w\in\HH^{1,+}_0$.
In view of \eqref{eq:check-move},
\begin{equation}\label{eq:H(w)-symmetric}
\1\Gamma(w)f,g\2=\1 w\check{f},g\2=\1 w,fg\2=\1\Gamma(w)g,f\2
\end{equation}
for any $w,f,g\in\HH^{1,+}_0$. It follows from Proposition \ref{prop:A-properties} and
Proposition \ref{prop:A-properties_k} that
\begin{equation}\label{eq:A(w)H(w)-symmetric}
\1 A(\mu,w)\Gamma(w) f,g\2=\1 A(\mu,w)\Gamma(w) g,f\2\quad\text{and}\quad
\1 A_k(w)\Gamma(w) f,g\2=\1 A_k(w)\Gamma(w) g,f\2
\end{equation}
for any $w,f,g\in\HH^{1,+}_0$, $\mu\in\D$, and $k\ge 0$. 

\begin{Remark}\label{rem:Legendre_transform}
The map 
\begin{equation}\label{eq:Legendre_transform}
\HH^{1,+}_0\times\HH^{1,+}_0\to\HH^{1,+}_0\times\HH^{1,+}_0,
\quad (w,p)\mapsto\Big(w,\big(\nabla_p H\big)(w,p)/2\Big),
\end{equation}
where $\big(\nabla_p H\big)(w,p)=2\Gamma(w)p$ is an analog of the Legendre transform corresponding 
to the Hamiltonian $\frac{1}{2}H(w,p)$. It transforms impulses into velocities. 
Note that \eqref{eq:Legendre_transform} is {\em not} invertible, since the operator 
\eqref{eq:Hankel-introduction} is compact.
\end{Remark}

\begin{Remark}
Note that formula \eqref{eq:Poisson_bracket} defines a (weak) Poisson bracket of $C^1$ functions
on the phase space $\W^{1,+}_{r,0}\times\W^{1,+}_{r,0}$, $r>0$. It follows from the properties of 
the space $\W^{1,+}_{r,0}$ that the definitions and the statements proved in this Section continue 
to hold with $\HH^{1,+}_0$ replaced by $\W^{1,+}_{r,0}$, $r>1$.
\end{Remark}

\section{The integrals and the momentum map}\label{sec:integrals_f=0}
In this Section we derive explicit formulas for the expressions \eqref{eq:I_k-introduction}
considered in the Introduction. For any given $w\in\HH^{1,+}_0$ and $k\ge 0$ we have that
\begin{equation}\label{eq:I_k}
I_k(w,p)=\1 A_k(w)\Gamma(w) p,p\2
=\big\1 A_k(w)\big(\Pi^+_0(w\check{p})\big),p\big\2
\end{equation}
where $A_k(w) : \HH^{1,+}_0\to\HH^{1,+}_0$ is given by \eqref{eq:A_k(H)}. 
It follows from Proposition \ref{prop:A-operator(H)} (iv)
that $I_k : \HH^{1,+}_0\times\HH^{1,+}_0\to\C$ is well-defined and analytic.
In addition to \eqref{eq:I_k}, for any given $w\in\HH^{1,+}_0$ consider the $1$-parameter family of
quadratic in $p\in\HH^{1,+}_0$ function
\begin{equation}\label{eq:I(mu)}
I(\mu;w,p):=\1 A(\mu,w)\Gamma(w) p,p\2
=\big\1 A(\mu,w)\big(\Pi^+_0(w\check{p})\big),p\big\2
=\sum_{k\ge 0}I_k(w,p)\,\mu^k
\end{equation}
where $\mu\in\D$ is a parameter and $A(\mu,w) : \HH^{1,+}_0\to\HH^{1,+}_0$
is the operator \eqref{eq:A-expansion(H)} (see Proposition \ref{prop:A-operator(H)} (iv)).
By Proposition \ref{prop:A-operator(H)} (iv) and the analyticity of \eqref{eq:w->H(w)-analytic}, 
for any given $\mu\in\D$ the function 
$I(\mu) : \HH^{1,+}_0\times\HH^{1,+}_0\to\C$, $(w,p)\mapsto I(\mu;w,p)$, 
is well-defined and analytic. The series in \eqref{eq:I(mu)} converges uniformly on
compact subsets of $\mu\in\D$ and uniformly on bounded sets of $\HH^{1,+}_0\times\HH^{1,+}_0$.

In order to formulate the results, we introduce the left and the right shifts 
$S_\pm : \HH^{1,+}_0\to \HH^{1,+}_0$ on $\HH^{1,+}_0$,
\begin{equation}\label{eq:S_pm}
S_-f:=\Pi^+_0(L f),\quad S_+f:=R f,
\end{equation}
where $L$ and $R$ are the left an the right shifts on $\HH^1$ (see \eqref{eq:L,R}).
The operators $S_\pm : \HH^{1,+}_0\to\HH^{1,+}_0$ are bounded and satisfy
\begin{equation}\label{eq:S_pm-properties}
\1 S_-u,v\2=\1 u,S_+v\2,\quad S_-S_+u=u,\quad S_+S_-u=u-u_1 e_1,\quad\text{and}\quad
S_+(uv)=uS_+v,
\end{equation}
for any $u,v\in\HH^{1,+}_0$, where we set
\begin{equation}\label{eq:e_l}
e_l(y):=e^{2\pi i l y},\quad y\in\T,\quad l\in\Z.
\end{equation}
Clearly, $\{e_l\,|\,l\in\Z\}$ is a basis in $\HH^1$.
We have the following explicit formulas for \eqref{eq:I_k} and \eqref{eq:I(mu)}.

\begin{Proposition}\label{prop:I-formulas}
\begin{itemize}
\item[(i)] For any $w,p\in\HH^{1,+}_0$ we have that
\begin{equation}\label{eq:I_k-formula}
I_k(w,p)=\sum_{1\le j\le k}\1 S_-^jw,p\2 \1 S_-^kw,S_-^{j-1}p\2-
w_j\1 S_-^kw,p\,S_-^{j-1}p\2\big),\quad k\ge 1,
\end{equation}
and $I_0(w,p)=0$ for $k=0$.
For any given $k\ge 1$ the map $I_k : \HH^{1,+}_0\times\HH^{1,+}_0\to\C$ is
analytic.
\item[(ii)] For any $w,p\in\HH^{1,+}_0$ and $\mu\in\D$, we have that
\begin{align}\label{eq:I(mu)-formula}
I(\mu;w,p)=\frac{1}{\mu}\left([\Gamma(w)p]^2(\mu)-w(\mu)\,[\Gamma(w)(p^2)](\mu)\right)
\end{align}
where $\Gamma(w) : \HH^{1,+}_0\to\HH^{1,+}_0$ is the Hankel operator \eqref{eq:Hankel-introduction} and
the right side is holomorphic for $\mu\in\D$ and continuous on its closure.
Moreover, for any given $w,p\in\HH^{1,+}_0$ the expression \eqref{eq:I(mu)-formula}, 
when considered as a function $y\mapsto I(e^{2\pi i y};w,p)$, $\T\to\C$,
belongs to $\HH^{1,+}_0$ and the momentum map
\begin{equation}\label{eq:I-momentum_map(H)}
I : \HH^{1,+}_0\times\HH^{1,+}_0\to\HH^{1,+}_0,\quad (w,p)\mapsto 
I(w,p):=S_-\left([\Gamma(w)p]^2-w\,\Gamma(w)(p^2)\right),
\end{equation}
is analytic.
\end{itemize}
\end{Proposition}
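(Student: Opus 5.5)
I will prove (ii) first and then obtain (i) by reading off Taylor coefficients in $\mu$.

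\emph{Step 1: the generating function.} By the definition \eqref{eq:I(mu)} we have $I(\mu;w,p)=\1 A(\mu,w)\big(\Pi^+_0(w\check p)\big),p\2$. This is exactly the left side of \eqref{eq:A(mu,w)-symmetric} in Proposition \ref{prop:A-properties} (i), taken with $f=g=p$. In the $\HH^{1,+}_0$ version given by Proposition \ref{prop:A-properties} (iii), this yields
\[
I(\mu;w,p)=\frac{1}{\mu}\Big([\Pi^+_0(w\check p)](\mu)^2-w(\mu)\,[\Pi^+_0(w\check p^2)](\mu)\Big).
\]
Since $\Gamma(w)p=\Pi^+_0(w\check p)$ and $\Gamma(w)(p^2)=\Pi^+_0(w\check{p}^2)$, this is \eqref{eq:I(mu)-formula}.

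\emph{Step 2: regularity and analyticity of $I(w,p)$.} Set $F:=[\Gamma(w)p]^2-w\,\Gamma(w)(p^2)$. Each factor lies in $\HH^{1,+}_0$, and $\HH^{1,+}_0$ is a Banach algebra, so $F\in\HH^{1,+}_0$. In particular $F_0=0$, and $F$, viewed as a function on $\D$, is holomorphic and continuous on $\overline{\D}$. Moreover $F_1=0$ as well, because the first Fourier coefficient of a product of two elements of $\HH^{1,+}_0$ vanishes. Hence $F(\mu)/\mu$ has no singularity at $\mu=0$, and its coefficients are $(F_{k+1})_{k\ge 0}$. On $\T$ this function equals $LF$. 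Because $F_0=0$, the projection in $S_-F=\Pi^+_0(LF)$ only discards the $0$-th coefficient $F_1=0$, so $S_-F=LF$. Therefore $y\mapsto I(e^{2\pi iy};w,p)$ coincides with $S_-F\in\HH^{1,+}_0$.

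The map $(w,p)\mapsto S_-F$ is analytic. It is a composition of the analytic map \eqref{eq:w->H(w)-analytic}, continuous bilinear operations in the Banach algebra, and the bounded linear operator $S_-$. The identification of the power series $\sum_k I_k\mu^k$ from \eqref{eq:I(mu)} with the boundary function is legitimate because the series converges uniformly on compacts of $\D$ and $S_-F$ is continuous on $\overline\D$. This gives (ii).

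\emph{Step 3: coefficient extraction for (i).} Comparing coefficients in \eqref{eq:I(mu)} with the result of Step 2 gives $I_k(w,p)=(S_-F)_k=F_{k+1}$. The case $k=0$ gives $I_0=F_1=0$. For $k\ge 1$, I write $F_{k+1}$ as a convolution sum of Fourier coefficients and then repackage it with the shift operators. Using $(uv)_{k+1}=\sum_{j=1}^{k}u_jv_{k+1-j}$ for $u,v\in\HH^{1,+}_0$ and $(\Gamma(w)q)_m=\sum_{n\ge 1}w_{m+n}q_n=\1 S_-^m w,q\2$, the first term of $F_{k+1}$ becomes
\[
\sum_{1\le j\le k}\1 S_-^jw,p\2\,\1 S_-^{k+1-j}w,p\2,
\]
and the second term becomes $\sum_{j}w_j\1 S_-^{k+1-j}w,p^2\2$. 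Matching these with the form \eqref{eq:I_k-formula} requires adjoint relations from \eqref{eq:S_pm-properties}, in particular $\1 S_-^a w,q\2=\1 S_-^{b}w,S_+^{a-b}q\2$, and $S_+^{m}(uv)=u\,S_+^m v$. I expect a reindexing $j\mapsto k+1-j$ to be needed as well.

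I expect this bookkeeping to be the main obstacle. The displayed \eqref{eq:I_k-formula} has an unbalanced parenthesis, and its placement of $S_-^{j-1}$ versus $S_+$ has to be reconciled with the identities above. If they cannot be reconciled, I would state the formula in the convolution form derived here. Analyticity of each $I_k$ then follows from (ii), since the map $u\mapsto u_k$ is a bounded linear functional.
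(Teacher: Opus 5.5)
\textbf{Part (ii).} Your argument is the paper's: Proposition \ref{prop:A-properties} with $f=g=p$, plus the Banach algebra property. Steps 2--3 then correctly give $I_k(w,p)=F_{k+1}$, that is,
\[
I_k=\sum_{j=1}^k a_j\,a_{k+1-j}-\sum_{j=1}^k w_j\,\langle S_-^{k+1-j}w,p^2\rangle,\qquad a_m:=\langle S_-^m w,p\rangle=(\Gamma(w)p)_m .
\]

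\textbf{The gap is in (i).} You never prove the stated formula \eqref{eq:I_k-formula}. You even allow for replacing it by the convolution form above, which would mean proving a different statement. The formula is correct; only the stray parenthesis is a typo. However, the reconciliation you sketch (adjoint identities plus a reindexing $j\mapsto k+1-j$) cannot succeed term by term. The reason is that $\langle S_-^kw,S_-^{j-1}p\rangle=\langle S_+^{j-1}S_-^kw,p\rangle$, and $S_+^{j-1}S_-^{j-1}$ is not the identity: it kills the first $j-1$ coefficients. Concretely,
\[
\langle S_-^kw,S_-^{j-1}p\rangle=a_{k+1-j}-\sum_{i=1}^{j-1}w_{i+k+1-j}\,p_i .
\]
Using $\langle S_-^m\Gamma(w)p,q\rangle=\langle S_-^mw,pq\rangle$, you also get
\[
\langle S_-^kw,p\,S_-^{j-1}p\rangle=\langle S_-^{k+1-j}w,p^2\rangle-\sum_{i=1}^{j-1}a_{i+k+1-j}\,p_i .
\]
Hence the right side of \eqref{eq:I_k-formula} minus your expression is
\[
-\sum_{\substack{1\le j,n\le k\\ j+n\ge k+2}}\big(a_j\,w_n-w_j\,a_n\big)\,p_{j+n-k-1}.
\]
This vanishes only because the summand is antisymmetric in $(j,n)$ on a symmetric index set. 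It is a cancellation \emph{between} the two sums, and it is the missing step.

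\textbf{How to close it.} Either supply this cancellation, or argue as the paper does and skip the generating function for (i). From \eqref{eq:A_k(H)} you have $A_k(w)f=w\,\Pi^-_0(L^{k+1}f)-f\,\Pi^-_0(L^{k+1}w)$. Cancelling the product of the low modes of $w$ and $f$ gives the finite-rank form
\[
A_k(w)f=\sum_{j=1}^k\big(f_j\,S_+^{j-1}S_-^kw-w_j\,S_+^{j-1}S_-^kf\big).
\]
Take $f=\Gamma(w)p$ and pair with $p$. Then use $(\Gamma(w)p)_j=\langle S_-^jw,p\rangle$, $\langle S_+u,v\rangle=\langle u,S_-v\rangle$ and $\langle S_-^k\Gamma(w)p,q\rangle=\langle S_-^kw,pq\rangle$; this yields \eqref{eq:I_k-formula} directly. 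Your coefficient-extraction route is valid and even gives the tidier convolution formula, but on its own it does not establish (i) as stated.
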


\noindent As mentioned in the Introduction, the functions \eqref{eq:I_k} are Poisson commuting
integrals of the Hamiltonian flow of $H$ on $\HH^{1,+}_0\times\HH^{1,+}_0$ 
(respectively $\W^{1,+}_{r,0}\times\W^{1,+}_{r,0}$). This explains our choice of terminology
for the map \eqref{eq:I-momentum_map(H)}.

\begin{proof}[Proof of Proposition \ref{prop:I-formulas}]
(i) By \eqref{eq:A_k(H)}, for any $w,f\in\HH^{1,+}_0$ we have that
$A_0(w)f=0$ and, for $k\ge 1$,
\begin{align}
A_k(w)f&=f\,\big(L^{k+1}w-\Pi^-_0\big(L^{k+1}w\big)\big)
-w\,\big(L^{k+1}f-\Pi^-_0\big(L^{k+1}f\big)\big)\nonumber\\
&=w\,\Pi^-_0\big(L^{k+1}f\big)-f\,\Pi^-_0\big(L^{k+1}w\big)\label{eq:A_k-shifts_bis}\\
&=\big(\sum_{l\ge k+1}w_le_l\big)\big(\sum_{1\le j\le k}f_j e_j\big)\,e_{-(k+1)}
-\big(\sum_{l\ge k+1}f_le_l\big)\big(\sum_{1\le j\le k}w_je_j\big)\,e_{-(k+1)}\nonumber\\
&=\sum_{1\le j\le k}\big(f_jS_+^{j-1}S_-^kw-w_jS_+^{j-1}S_-^kf\big).\label{eq:A_k-shifts}
\end{align}
We then obtain from \eqref{eq:I_k} and \eqref{eq:A_k-shifts} that for $k\ge 1$,
\begin{align*}
I_k(w,p)&=\big\1 A_k(w)\big(\Pi^+_0(w\check{p})\big),p\big\2\nonumber\\
&=\sum_{1\le j\le k}(w\check{p})_j \1 S_+^{j-1}S_-^kw,p\2-
w_j\big\1 S_+^{j-1}S_-^k\Pi^+_0(w\check{p}),p\big\2\big)\nonumber\\
&=\sum_{1\le j\le k}\1 w, S_+^jp\2 \1 S_-^kw,S_-^{j-1}p\2-
w_j\1 S_-^kw,p\,S_-^{j-1}p\2\big).
\end{align*}
where we used that $(w\check{p})_j=\1 w\check{p},e_j\2=\1 w,S_+^jp\2$ as well as
\eqref{eq:S_pm-properties} and \eqref{eq:check-move} to conclude that
$\big\1 S_+^{j-1}S_-^k\Pi^+_0(w\check{p}),p\big\2=\1 S_-^kw,p\,S_-^{j-1}p\2$.
The last statement in (i) was proved earlier.
This completes the prove of item (i).

Let us now prove item (ii). 
In fact, it follows \eqref{eq:I(mu)} and Proposition \ref{prop:A-properties} that
for any $w,p\in\HH^{1,+}_0$ and $\mu\in\D$ we have 
\begin{align}\label{eq:I(mu)-formula1}
I(\mu;w,p)=\big\1 A(\mu,w)\big(\Pi^+_0(w\check{p})\big),p\big\2=
\frac{1}{\mu}\left([\Pi^+_0(w\check{p})]^2(\mu)-w(\mu)\,[\Pi^+_0(w\check{p}^2)](\mu)\right).
\end{align}
The analyticity of the map $I : \HH^{1,+}_0\times\HH^{1,+}_0\to\HH^{1,+}_0$
follows since $\HH^{1,+}_0$ is a Banach algebra.
This completes the proof of the Proposition.
\end{proof}

The Corollary below follows directly from \eqref{eq:X_F},  \eqref{eq:I_k-formula}, 
and the fact that $\HH^{1,+}_0$ is a Banach algebra (see the proof of \eqref{eq:H-gradients}).

\begin{Corollary}\label{coro:X_I_k-vector-field(H)}
For any given $k\ge 1$ the map
\[
X_{I_k} : \HH^{1,+}_0\times\HH^{1,+}_0\to\HH^{1,+}_0\times\HH^{1,+}_0
\]
is well-defined and analytic. 
\end{Corollary}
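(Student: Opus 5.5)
We will compute the two partial gradients of $I_k$ from the explicit formula \eqref{eq:I_k-formula} in Proposition \ref{prop:I-formulas} (i), and show that each lies in $\HH^{1,+}_0$ rather than merely in $\mathcal{D}^+_0$. The formula expresses $I_k$ as a finite sum over $1\le j\le k$ of products of the scalars $\1 S_-^jw,p\2$, $\1 S_-^kw,S_-^{j-1}p\2$, $w_j=\1 w,e_{-j}\2$, and the trilinear terms $\1 S_-^kw,p\,S_-^{j-1}p\2$. By the product rule it is enough to show that the $w$- and $p$-gradients of each factor are elements of $\HH^{1,+}_0$ that depend analytically on $(w,p)$.

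For the bilinear factors we use the adjoint relation $\1 S_-u,v\2=\1 u,S_+v\2$ from \eqref{eq:S_pm-properties}. This gives, for instance,
\[
\nabla_w\1 S_-^jw,p\2=S_+^jp,\qquad \nabla_p\1 S_-^jw,p\2=S_-^jw,
\]
and similarly $\nabla_w\1 S_-^kw,S_-^{j-1}p\2=S_+^kS_-^{j-1}p$ and $\nabla_p\1 S_-^kw,S_-^{j-1}p\2=S_+^{j-1}S_-^kw$. The coefficient $w_j$ has gradient $e_j\in\HH^{1,+}_0$, since $\1 e_j,\delta w\2=(\delta w)_{-j}$. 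Here some care is needed: by \eqref{eq:L^2-metric(H0)} the pairing on $\HH^{1,+}_0$ is $\sum_k w_kp_k$, so the functional $\delta w\mapsto(\delta w)_j$ is represented by $e_j$. In every case the gradient is a bounded linear image of $w$ or $p$, and hence analytic.

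For the trilinear term we use \eqref{eq:check-move}. Writing $\1 S_-^kw,p\,S_-^{j-1}p\2=\1 S_+^k(p\,S_-^{j-1}p)\ \text{paired with}\ w\2$, we get
\[
\nabla_w=S_+^k\big(p\,S_-^{j-1}p\big).
\]
For the $p$-gradient we vary $p$ in both slots. The first slot contributes $\Pi^+_0\big((S_-^kw)^{\vee}\ldots\big)$-type terms; concretely, $\1 u,\delta p\,v\2=\1 u\check v,\delta p\2$, so the contribution is $\Pi^+_0(S_-^kw\,\check{v})$ with $v=S_-^{j-1}p$. The second slot contributes $S_+^{j-1}\Pi^+_0(S_-^kw\,\check p)$. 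Because $\HH^1$ is a Banach algebra and $\Pi^+_0$, $S_\pm$ and $f\mapsto\check f$ are bounded, all of these lie in $\HH^{1,+}_0$ and depend analytically (polynomially) on $(w,p)$; this is the same mechanism as in the derivation of \eqref{eq:H-gradients}.

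The main point requiring attention is that the Riesz-type gradients are defined only as elements of $\mathcal{D}^+_0$ through the nondegenerate pairing \eqref{eq:L^2-metric(H0)}. We must therefore check that each explicit candidate $g\in\HH^{1,+}_0$ satisfies $\1 g,\delta\2=D G(\delta)$ for all $\delta\in\HH^{1,+}_0$. Once the relevant projection is inserted, this follows from \eqref{eq:check-move} and \eqref{eq:S_pm-properties}, with uniqueness supplied by the nondegeneracy of the pairing on $\mathcal{D}^+_0\times\HH^{1,+}_0$. Assembling the finite sum then gives $X_{I_k}=(\nabla_pI_k,-\nabla_wI_k)$ as a polynomial, and hence analytic, map $\HH^{1,+}_0\times\HH^{1,+}_0\to\HH^{1,+}_0\times\HH^{1,+}_0$.
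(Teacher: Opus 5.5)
Your proposal is correct and follows the paper's own argument. The paper deduces the Corollary directly from \eqref{eq:X_F}, the explicit formula \eqref{eq:I_k-formula}, and the Banach algebra property of $\HH^{1,+}_0$; your term-by-term gradient computations make this explicit, and they check out.

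One small slip: under the bilinear pairing $\langle f,g\rangle=\sum_k f_kg_k$ one has $w_j=\langle w,e_j\rangle$ and $\langle e_j,\delta w\rangle=(\delta w)_j$. With $e_{-j}$ and $(\delta w)_{-j}$ as you wrote, both would vanish on $\HH^{1,+}_0$. Your final conclusion $\nabla_w w_j=e_j$ is nevertheless right.
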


\begin{Remark}
Note that the definitions and the statements proved in this Section continue 
to hold with $\HH^{1,+}_0$ replaced by $\W^{1,+}_{r,0}$, $r>1$.
\end{Remark}

\section{Involutivity of the integrals}\label{sec:involutivity_f=0}
In this Section we prove Theorem \ref{th:involutivity} stated in the Introduction.
We start with a preparation. For a given $\mu\in\D$ consider the delta function 
$\delta_\mu : \HH^{r,+}_0\to\C$, $g\mapsto g(\mu)$.
Since $\HH^{1,+}_0$ is boundedly embedded in the space $C(\D)$ of continuous complex-valued
functions on $\D$, $\delta_\mu$ is a bounded  linear functional on $\HH^{1,+}_0$, 
$\delta_\mu\in\big(\HH^{1,+}_0\big)^*$. 
Moreover, we have that
\begin{equation}\label{eq:tilde_delta1}
\delta_\mu g=\1\tilde{\delta}_\mu,g\2,\quad\forall g\in\HH^{1,+}_0,
\end{equation}
where the holomorphic function $\tilde{\delta}_\mu : \D\to\C$,
\begin{equation}\label{eq:tilde_delta2}
\tilde{\delta}_\mu(z):=\sum_{k\ge 1}\mu^k z^k=\frac{\mu z}{1-\mu z},
\end{equation}
has a finite $\HH^1$-norm
$\|\tilde{\delta}_\mu\|_{\HH^1}=\big(\sum_{k\ge 1}|\mu|^{2k} k^2\big)^{1/2}<\infty$,
and hence
\[
\tilde{\delta}_\mu\in\HH^{1,+}_0\quad\text{for}\quad \mu\in\D.
\]
Let us now fix $\mu\in\D$ and consider the analytic function
\begin{equation}\label{eq:I(mu)-map}
I(\mu) : \HH^{1,+}_0\times\HH^{1,+}_0\to\C,\quad
(w,p)\mapsto \delta_\mu[I(w,p)]=I(\mu;w,p),
\end{equation}
where $I(w,p)$ is the map \eqref{eq:I-momentum_map(H)}.
The Lemma below follows directly from \eqref{eq:X_F} and 
\eqref{eq:I(mu)-formula}.

\begin{Lemma}\label{lem:X_I(mu)}
For any given $\mu\in\D$ we have that 
$X_{I(\mu)}=\big(\nabla_p I(\mu),-\nabla_w I(\mu)\big)$ where
\[
\nabla_w I(\mu)=\alpha_\mu\,p\,\tilde{\delta}_\mu-\beta_\mu\,\tilde{\delta}_\mu-
\gamma_\mu\,p^2\,\tilde{\delta}_\mu,\quad
\nabla_p I(\mu)=\alpha_\mu\,\Pi^+_0(w\,\check{\tilde{\delta}}_\mu)-
2\gamma_\mu\,\Pi^+_0(w\,\check{p}\,\check{\tilde{\delta}}_\mu),
\]
$\tilde{\delta}_\mu\in\HH^{1,+}_0$ is the holomorphic function \eqref{eq:tilde_delta2},
and $\alpha_\mu:=2[\Pi^+_0(w\check{p})](\mu)/\mu$, $\beta_\mu:=[\Pi^+_0(w\check{p}^2)](\mu)/\mu$,
and $\gamma_\mu:=w(\mu)/\mu$.
\end{Lemma}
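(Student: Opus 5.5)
We will compute the partial derivatives of $I(\mu;w,p)$ directly from the explicit formula \eqref{eq:I(mu)-formula1},
\[
I(\mu;w,p)=\frac{1}{\mu}\Big([\Pi^+_0(w\check{p})](\mu)^2-w(\mu)\,[\Pi^+_0(w\check{p}^2)](\mu)\Big),
\]
and then represent each directional derivative as a pairing $\1\,\cdot\,,\delta w\2$ or $\1\,\cdot\,,\delta p\2$ via \eqref{eq:tilde_delta1}. By definition \eqref{eq:X_F}, identifying these representing functions gives the gradients and hence $X_{I(\mu)}$. We first treat $\mu\ne 0$; the case $\mu=0$ follows by continuity, since both sides are holomorphic in $\mu\in\D$ (alternatively, it is trivial since $I_0=0$ and the formula is read as a limit).

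For the $w$-variation we differentiate term by term, using linearity of $\Pi^+_0$ and of evaluation at $\mu$:
\[
(D_1I(\mu))(\delta w)=\alpha_\mu[\Pi^+_0(\delta w\,\check{p})](\mu)-\beta_\mu\,\delta w(\mu)-\gamma_\mu[\Pi^+_0(\delta w\,\check{p}^2)](\mu).
\]
Each term is rewritten as $\1\tilde\delta_\mu,\Pi^+_0(\delta w\,\check g)\2$. Since $\tilde\delta_\mu\in\HH^{1,+}_0$, the projection can be dropped: the pairing of $\tilde\delta_\mu$ with the $\HH^{1,-}$ part vanishes, because $\1 f,g\2=\sum f_kg_k$ couples index $k$ with $k$. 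Then \eqref{eq:check-move} gives $\1\tilde\delta_\mu,\delta w\,\check g\2=\1\tilde\delta_\mu g,\delta w\2$, where $\check{\check g}=g$. With $g=p$ and $g=p^2$ this produces the terms $\alpha_\mu p\tilde\delta_\mu$ and $-\gamma_\mu p^2\tilde\delta_\mu$, while $\delta w(\mu)=\1\tilde\delta_\mu,\delta w\2$ gives $-\beta_\mu\tilde\delta_\mu$. The products lie in $\HH^{1,+}_0$ by the Banach algebra property, so by uniqueness of the representing element these are the gradient.

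For the $p$-variation, note that $\check{(\cdot)}$ is linear, so
\[
(D_2I(\mu))(\delta p)=\alpha_\mu[\Pi^+_0(w\,\check{\delta p})](\mu)-2\gamma_\mu[\Pi^+_0(w\check{p}\,\check{\delta p})](\mu).
\]
Writing this as $\1\tilde\delta_\mu,w\,\check h\,\check{\delta p}\2$ and moving factors with \eqref{eq:check-move} gives $\1\tilde\delta_\mu\check w h,\delta p\2$, which is not yet in $\HH^{1,+}_0$ form. The cleaner route is the symmetry $\1 f,g\2=\1\check f,\check g\2$, which yields $\1\check{\tilde\delta}_\mu w\check h,\delta p\2$. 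Since $\delta p\in\HH^{1,+}_0$, only the $\Pi^+_0$-part of the left factor contributes, giving $\1\Pi^+_0(w\check h\check{\tilde\delta}_\mu),\delta p\2$. With $h=1$ and $h=p$ we obtain the stated $\nabla_pI(\mu)$, and these lie in $\HH^{1,+}_0$.

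The only delicate point is bookkeeping: deciding at each step which factor carries the check and verifying that the dropped projections really pair to zero. This reduces to the index identity $\1 f,g\2=\sum_k f_kg_k$ together with the support of the Fourier coefficients, and I expect it to be the main (if modest) obstacle.
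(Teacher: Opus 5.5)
Your proposal is correct and follows essentially the paper's own argument: differentiate the explicit formula \eqref{eq:I(mu)-formula1}, represent evaluation at $\mu$ by pairing with $\tilde{\delta}_\mu$, drop or insert $\Pi^+_0$ using the Fourier supports, and move the checks with \eqref{eq:check-move} and $\1 f,g\2=\1\check{f},\check{g}\2$. The paper handles the first $p$-term via the symmetry \eqref{eq:H(w)-symmetric} of $\Gamma(w)$ and only asserts the $w$-gradient, which you actually carry out. One small slip: your discarded intermediate expression should read $\1\tilde{\delta}_\mu\check{w}h,(\delta p)^\vee\2$ rather than $\1\tilde{\delta}_\mu\check{w}h,\delta p\2$, but your subsequent use of $\1 f,g\2=\1\check{f},\check{g}\2$ correctly gives $\1\check{\tilde{\delta}}_\mu w\check{h},\delta p\2$.
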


\begin{proof}[Proof of Lemma \ref{lem:X_I(mu)}]
For any given $w,p\in\HH^{1,+}_0$ and $\mu\in\D$ we obtain 
from \eqref{eq:I(mu)-formula} and \eqref{eq:tilde_delta1} that
\[
I(\mu;w,p)=
\frac{1}{\mu}\Big(\Big\1[\Pi^+_0(w\check{p})]^2,\widetilde{\delta}_\mu\Big\2
-\1 w,\widetilde{\delta}_\mu\,\2\big\1\Pi^+_0(w\check{p}^2),\widetilde{\delta}_\mu\big\2\Big)\,.
\]
This implies that for any $\delta p\in\HH^{1,+}_0$ we have that
\begin{align*}
\big(D_2 I(\mu)\big)(w,p)(\delta p&)=
\frac{1}{\mu}\Big(
2[\Gamma(w)p](\mu)\,\big\1\Gamma(w)\delta p,\widetilde{\delta}_\mu\big\2
-w(\mu)\,\big\1 2\Pi^+_0\big(w\check{p}(\delta p)^\vee\big),\widetilde{\delta}_\mu\big\2\Big)\\
&=\frac{1}{\mu}\Big(
2[\Gamma(w)p](\mu)\,\big\1\Gamma(w)\widetilde{\delta}_\mu,\delta p\big\2
-2 w(\mu)\,\big\1\Gamma(w)(p\widetilde{\delta}_\mu),\delta p\big\2
\Big)\\
&=\alpha_\mu\big\1\Gamma(w)\widetilde{\delta}_\mu,\delta p\big\2
-2\gamma_\mu\big\1\Gamma(w)(p\widetilde{\delta}_\mu),\delta p\big\2
\end{align*}
where we used \eqref{eq:H(w)-symmetric} and the fact that
\[
\big\1\Pi^+_0\big(w\check{p}(\delta p)^\vee\big),\widetilde{\delta}_\mu\big\2
=\big\1 w\check{p}(\delta p)^\vee,\widetilde{\delta}_\mu\big\2
=\big\1 \check{w}p(\delta p),\widecheck{\widetilde{\delta}}_\mu\big\2
=\big\1\delta p,\Pi^+_0(w\check{p}\widecheck{\widetilde{\delta}}_\mu)\big\2
=\big\1\delta p,\Gamma(w)(p\widetilde{\delta}_\mu)\big\2\,.
\]
This implies the formula for the gradient $\nabla_pI(\mu)$ in Lemma \ref{lem:X_I(mu)}.
The formula for $\nabla_wI(\mu)$ can be proved in the same way.
\end{proof}

\noindent Note that $\alpha_\mu$, $\beta_\mu$, and $\gamma_\mu$ do not have a singularity 
at $\mu=0$. Since $\HH^{1,+}_0$ is a Banach algebra and since the projection 
$\Pi^+_0 : \HH^1_r\to\HH^{1,+}_0$ is continuous, we have the following version
of Corollary \ref{coro:X_I_k-vector-field(H)}.

\begin{Corollary}\label{coro:X_I(mu)-vector_field(H)}
For any given $\mu\in\D$ the map
\[
X_{I(\mu)} : \HH^{1,+}_0\times\HH^{1,+}_0\to\HH^{1,+}_0\times\HH^{1,+}_0
\]
is well-defined and analytic. 
\end{Corollary}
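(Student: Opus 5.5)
The plan is to read the statement off Lemma \ref{lem:X_I(mu)}, which gives explicit formulas for both components of $X_{I(\mu)}=\big(\nabla_p I(\mu),-\nabla_w I(\mu)\big)$. We fix $\mu\in\D$ and show that each component is a well-defined analytic map $\HH^{1,+}_0\times\HH^{1,+}_0\to\HH^{1,+}_0$. The scalar coefficients $\alpha_\mu$, $\beta_\mu$, $\gamma_\mu$ are, up to the factor $1/\mu$, evaluations at $\mu$ of $\Pi^+_0(w\check p)$, $\Pi^+_0(w\check p^2)$ and $w$.

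First we treat these coefficients. For $\mu\ne 0$ we write $\alpha_\mu=\frac{2}{\mu}\1\tilde\delta_\mu,\Pi^+_0(w\check p)\2$ using \eqref{eq:tilde_delta1}, and similarly for $\beta_\mu$ and $\gamma_\mu$. Here $\tilde\delta_\mu\in\HH^{1,+}_0$ and the pairing \eqref{eq:L^2-metric(H0)} is bounded. The maps $(w,p)\mapsto w\check p$ and $(w,p)\mapsto w\check p^2$ are continuous polynomial maps $\HH^{1,+}_0\times\HH^{1,+}_0\to\HH^1$, because $\HH^1$ is a Banach algebra and $f\mapsto\check f$ is an isometry. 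Composing with the bounded projection $\Pi^+_0$ and the bounded functional $\1\tilde\delta_\mu,\cdot\2$, the coefficients are continuous polynomials in $(w,p)$, hence analytic.

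For $\mu=0$ we avoid division by using the series instead. Since each of the three functions lies in $\HH^{1,+}_0$, it vanishes at $0$, so $g(\mu)/\mu=\sum_{k\ge1}g_k\mu^{k-1}$; at $\mu=0$ this equals $g_1$, the first Fourier coefficient. The map $g\mapsto g_1$ is again a bounded linear functional on $\HH^{1,+}_0$, so the case $\mu=0$ is handled the same way.

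Next we treat the vector-valued parts. The function $\nabla_w I(\mu)$ is a combination of the products $p\tilde\delta_\mu$, $\tilde\delta_\mu$ and $p^2\tilde\delta_\mu$ with the analytic scalar coefficients above. Since $\HH^{1,+}_0$ is a Banach algebra containing $\tilde\delta_\mu$, each product lies in $\HH^{1,+}_0$ and depends polynomially and continuously on $p$. For $\nabla_p I(\mu)$, the functions $w\check{\tilde\delta}_\mu$ and $w\check p\check{\tilde\delta}_\mu$ lie in $\HH^1$ and are continuous polynomials in $(w,p)$. Applying the bounded projection $\Pi^+_0:\HH^1\to\HH^{1,+}_0$ puts them in $\HH^{1,+}_0$. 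Products and compositions of analytic maps between Banach spaces are analytic, so $X_{I(\mu)}$ is analytic with values in $\HH^{1,+}_0\times\HH^{1,+}_0$.

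The only delicate point is that the gradients are a priori only elements of $\mathcal{D}^+_0$. This is resolved by Lemma \ref{lem:X_I(mu)}: by uniqueness of the Riesz representative, the gradients coincide with the explicit expressions listed there, and these were just shown to lie in $\HH^{1,+}_0$.
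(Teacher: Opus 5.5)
Your proposal is correct and takes the same route as the paper: the corollary is read off from the explicit gradients in Lemma \ref{lem:X_I(mu)}, using that $\alpha_\mu,\beta_\mu,\gamma_\mu$ have no singularity at $\mu=0$, the Banach algebra property of $\HH^1$ and $\HH^{1,+}_0$, and the boundedness of $\Pi^+_0$. You just spell these steps out in more detail, for instance by writing the coefficients as bounded linear functionals applied to $\Pi^+_0(w\check p)$, $\Pi^+_0(w\check p^2)$ and $w$.
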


It follows from \eqref{eq:L2-pairing(W)}, \eqref{eq:tilde_delta1}, 
and \eqref{eq:tilde_delta2}, that for any given $\mu,\nu\in\D$, $\mu\ne\nu$, and 
$g\in\HH^1$ we have
\begin{align}\label{eq:cycle1}
\1 g,\tilde{\delta}_\mu\,\tilde{\delta}_\nu\2=\1 g-g_0,\tilde{\delta}_\mu\,\tilde{\delta}_\nu\2
=\frac{1}{2\pi i}\oint_{|z|=1}\frac{\mu\nu\,\big(g(z)-g_0\big)}{(z-\mu)(z-\nu)}\,\frac{dz}{z}
=\frac{\mu g(\nu)}{\nu-\mu}+\frac{\nu g(\mu)}{\mu-\nu}
\end{align}
where we used the residue formula and the fact that $\big(g(z)-g_0\big)/z$ is holomorphic 
for $z\in\D$.

\begin{proof}[Proof of Theorem \ref{th:involutivity}]
The Theorem follows easily from Lemma \ref{lem:X_I(mu)}. In fact, take 
$\mu,\nu\in D$, $\mu\ne\nu$.
It then follows from Lemma \ref{lem:X_I(mu)}, \eqref{eq:check-move}, and \eqref{eq:cycle1}, 
that for any $w,p\in\HH^{1,+}_0$ we have
\begin{align}\label{eq:cycle2}
\big\1\nabla_p I(\mu),\nabla_w I(\nu)\big\2&=
\alpha_\mu\alpha_\nu\1\check{\tilde{\delta}}_\mu w,\tilde{\delta}_\nu p\2
-\alpha_\mu\beta_\nu\1\check{\tilde{\delta}}_\mu w,\tilde{\delta}_\nu\2
-\alpha_\mu\gamma_\nu\1\check{\tilde{\delta}}_\mu w,\tilde{\delta}_\nu p^2\2\nonumber\\
&-2\gamma_\mu\alpha_\nu\1\check{\tilde{\delta}}_\mu w\check{p},\tilde{\delta}_\nu p\2
+2\gamma_\mu\beta_\nu\1\check{\tilde{\delta}}_\mu w\check{p},\tilde{\delta}_\nu\2
+2\gamma_\mu\gamma_\nu\1\check{\tilde{\delta}}_\mu w\check{p},\tilde{\delta}_\nu p^2\2\nonumber\\
&=\Big(\alpha_\mu\alpha_\nu\1 w\check{p},\tilde{\delta}_\mu\,\tilde{\delta}_\nu\2
+2\gamma_\mu\gamma_\nu\1 w\check{p}^3,\tilde{\delta}_\mu\,\tilde{\delta}_\nu\2\Big)\nonumber\\
&-\alpha_\mu\beta_\nu\Big(\gamma_\nu\frac{\mu\nu}{\nu-\mu}+\gamma_\mu\frac{\nu\mu}{\mu-\nu}\Big)
-\alpha_\mu\gamma_\nu
\Big(\beta_\nu\frac{\mu\nu}{\nu-\mu}+\beta_\mu\frac{\nu\mu}{\mu-\nu}\Big)\nonumber\\
&-2\gamma_\mu\alpha_\nu
\Big(\beta_\nu\frac{\mu\nu}{\nu-\mu}+\beta_\mu\frac{\nu\mu}{\mu-\nu}\Big)
+\gamma_\mu\beta_\nu
\Big(\alpha_\nu\frac{\mu\nu}{\nu-\mu}+\alpha_\mu\frac{\nu\mu}{\mu-\nu}\Big)\nonumber\\
&=\Big(\alpha_\mu\alpha_\nu\1 w\check{p},\tilde{\delta}_\mu\,\tilde{\delta}_\nu\2
+2\gamma_\mu\gamma_\nu\1 w\check{p}^3,\tilde{\delta}_\mu\,\tilde{\delta}_\nu\2\Big)\nonumber\\
&+\frac{2\mu\nu}{\nu-\mu}\big(\alpha_\nu\beta_\mu\gamma_\mu-\alpha_\mu\beta_\nu\gamma_\nu\big)
+\frac{\mu\nu}{\nu-\mu}\big(\alpha_\mu\beta_\mu\gamma_\nu-\alpha_\nu\beta_\nu\gamma_\mu\Big)
\end{align}
where we also used the definition of $\alpha_\mu$, $\beta_\mu$, and $\gamma_\mu$, $\mu\in\D$.
Since the right side in \eqref{eq:cycle2} is symmetric with respect to the variables
$\mu$ and $\nu$, we conclude from \eqref{eq:Poisson_bracket} that
\[
\{I(\mu),I(\nu)\}=
\big\1\nabla_p I(\mu),\nabla_w I(\nu)\big\2-\big\1\nabla_p I(\nu),\nabla_w I(\mu)\big\2=0
\quad\forall\mu,\nu\in\D.
\]
(The case when $\mu=\nu$ is trivial.)
Similarly, it follows from Lemma \ref{lem:X_I(mu)} and \eqref{eq:H-gradients} that
for any given $\mu\in\D$ and $w,p\in\HH^{1,+}_0$ we have
\begin{align*}
\{I(\mu),H\}&=\alpha_\mu\1\check{\tilde{\delta}}_\mu w,p^2\2
-2\gamma_\mu\1\check{\tilde{\delta}}_\mu w \check{p},p^2\2
-2\alpha_\mu\1 w\check{p},\tilde{\delta}_\mu p\2
+2\beta_\mu\1 w\check{p},\tilde{\delta}_\mu\2
+2\gamma_\mu\1 w\check{p},\tilde{\delta}_\mu p^2\2\\
&=-\alpha_\mu\1\tilde{\delta}_\mu,w\check{p}^2\2
+2\beta_\mu\1\tilde{\delta}_\mu,w\check{p}\2=\mu\alpha_\mu\beta_\mu+\mu\beta_\mu\alpha_\mu=0
\end{align*}
where we used that 
$\1\tilde{\delta}_\mu,w\check{p}^2\2=[\Pi^+_0(w\check{p}^2)](\mu)=\mu\beta_\mu$
and 
$2\1\tilde{\delta}_\mu,w\check{p}\2=2[\Pi^+_0(w\check{p})](\mu)=\mu\alpha_\mu$.
It follows from and \eqref{eq:I(mu)} and Proposition \ref{prop:A-operator(H)} (iv) that 
the series
\[
[I(\mu)](w,p)=I(\mu;w,p)=\sum_{k\ge 1} I_k(w,p)\,\mu^k
\]
converges uniformly on compact subsets of $\mu\in\D$ and uniformly on bounded sets of
$w,p\in\HH^{1,+}_0\times\HH^{1,+}_0$. This implies that for any given 
$w,p\in\HH^{1,+}_0\times\HH^{1,+}_0$ the series
\[
\{I(\mu),I(\nu)\}=\sum_{k,l\ge 1}\{I_k,I_l\}\,\mu^k\nu^l\quad\text{and}\quad
\{I(\mu),H\}=\sum_{k\ge 1}\{I_k,H\}\,\mu^k
\]
converge absolutely and uniformly on compact sets of $\mu,\nu\in\D$.
This implies that
\[
\{I_k,I_l\}=0\quad\text{and}\quad\{I_k,H\}=0
\]
for any $k,l\ge 1$.
This completes the proof of Theorem \ref{th:involutivity}.
\end{proof}

\begin{Remark}
In the case of the space $\W^{1,+}_{r,0}$, $r>1$, we take $\mu\in\D_{1/r}$ and note
that for any $g\in\W^{1,+}_{r,0}$ the delta function $\delta_\mu : \W^{1,+}_{r,0}\to\C$ satisfies 
$\delta_\mu g=\1 g,\widetilde\delta_\mu\2$ where $\widetilde\delta_\mu$ is given by 
\eqref{eq:tilde_delta2} and $\widetilde\delta_\mu\in\W^{1,+}_{r,0}$. With this modification, 
the statements proved in this Section continue to hold with $\HH^{1,+}_0$ replaced 
by $\W^{1,+}_{r,0}$.
\end{Remark}

\section{Systems with potential}\label{sec:potentials}
In this Section we consider the case of potentials and prove Theorem \ref{th:J_k-involutivity}.
As in the Introduction, for any given $f\in\HH^1$ and $w\in\mathcal{O}$ consider the potential
\begin{equation}\label{eq:U^f}
V_f(w)=\1 1,f/w\2=\int_0^1\frac{f(y)}{w(y)}\,dy
\end{equation}
where $\mathcal{O}=\big\{w\in\HH^{1,+}_0\,\big|\,w(y)\ne 0\,\forall y\in\T\big\}$ is an open
and dense set in $\HH^{1,+}_0$. The map
\begin{equation}\label{eq:U^f-map}
V_f : \mathcal{O}\to\C,\quad w\mapsto V_f(w),
\end{equation}
is well-defined and analytic by the following Lemma.

\begin{Lemma}\label{lem:1/w-analytic}
The map $\OO\to\HH^1$, $w\mapsto 1/w$, is well-defined and analytic.
In particular, the map \eqref{eq:U^f-map} is analytic for any choice of $f\in\HH^1$.
\end{Lemma}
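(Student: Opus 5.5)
The map $w\mapsto 1/w$ will be realized as the restriction of the inversion map of the Banach algebra $\HH^1$, whose analyticity is standard. First I will show that $\OO$ is open in $\HH^{1,+}_0$. The embedding $\HH^1\hookrightarrow C(\T,\C)$ is continuous, and $\T$ is compact. So for $w\in\OO$ we have $m:=\min_{y\in\T}|w(y)|>0$. Any $\tilde w\in\HH^{1,+}_0$ with $\|\tilde w-w\|_{\HH^1}<m/C$ satisfies $\sup_{\T}|\tilde w-w|<m$, where $C$ is the embedding constant, and hence $\tilde w$ does not vanish on $\T$. (Density, asserted earlier, is not needed for the Lemma.) Next, by Lemma \ref{lem:1/w} (iii), each $w\in\OO\subseteq\HH^1$ is invertible in the unital commutative Banach algebra $\HH^1$, since $\HH^1$ contains constants. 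Thus $1/w\in\HH^1$ and the map $\OO\to\HH^1$, $w\mapsto 1/w$, is well-defined.

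For analyticity I will use the Neumann series locally. Fix $w\in\OO$ and set $u:=1/w\in\HH^1$. For $h\in\HH^{1,+}_0$ we have $w+h=w(1+uh)$. Let $C_0$ be the constant in $\|fg\|_{\HH^1}\le C_0\|f\|_{\HH^1}\|g\|_{\HH^1}$. Whenever $C_0\|u\|_{\HH^1}\|h\|_{\HH^1}<1/C_0$, the series
\[
\frac{1}{w+h}=u\sum_{n\ge 0}(-uh)^n
\]
converges absolutely in $\HH^1$, because $\|(uh)^n\|_{\HH^1}\le C_0^{n-1}(C_0\|u\|\,\|h\|)^n$. Each term $h\mapsto u(-uh)^n$ is a bounded homogeneous polynomial of degree $n$ in $h$. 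A uniformly convergent series of such polynomials on a ball is analytic, so the inversion map is analytic near every point of $\OO$. The sum equals $1/(w+h)$ by multiplying by $w+h$ and telescoping, and by uniqueness of inverses.

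Finally, $V_f(w)=\1 1,f\cdot(1/w)\2$ is a composition of three maps. The first is the analytic map $w\mapsto 1/w$. The second is the bounded linear map $g\mapsto fg$ on $\HH^1$, which is bounded by the Banach algebra property. The third is the bounded linear functional $g\mapsto\1 1,g\2=g_0$. Hence $V_f$ is analytic on $\OO$.

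The only genuine input is invertibility in $\HH^1$, which is supplied by Lemma \ref{lem:1/w} (iii); the remaining steps are routine. The one point to handle with care is that the non-unit multiplication constant $C_0$ must be tracked in the radius of convergence.
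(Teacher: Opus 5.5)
Your proposal is correct and follows essentially the paper's argument: invertibility of $w$ in $\HH^1$ via Lemma~\ref{lem:1/w}~(iii), a local Neumann series $1/(w_0+g)=(1/w_0)\sum_{k\ge 0}(-1)^k[(1/w_0)g]^k$ around each point of $\OO$, and the Banach algebra property of $\HH^1$ for $V_f$. Your explicit smallness condition $C_0^2\|u\|_{\HH^1}\|h\|_{\HH^1}<1$ is more careful than the paper's. The paper's bound $\|(1/w_0)g\|_{\HH^1}<1/2$ closes the estimate $\|a^k\|_{\HH^1}\le C^{k-1}\|a\|_{\HH^1}^k$ only when the multiplication constant $C$ is at most $2$, but $\|e_1^2\|_{\HH^1}=2\|e_1\|_{\HH^1}^2$ already shows that $C\ge 2$.
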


\begin{proof}[Proof of Lemma \ref{lem:1/w-analytic}]
The Lemma follows from Lemma \ref{lem:1/w} (iii) and the Banach algebra property 
of $\HH^1$. In fact, since $\OO$ is open in $\HH^1$ and since $1/w\in\HH^1$ for $w\in\OO$,
for any given $w_0\in\OO$ we can choose $\kappa>0$ such that for any $g\in\HH^1$ with 
$\|g\|_{\HH^1}<\kappa$ we have that $w_0+g\in\OO$ and
$\kappa<1/(2C\|1/w_0\|_{\HH^1})$ where $C>0$ is the Banach algebra constant
in $\HH^1$. Then, for any $g\in\HH^1$ with $\|g\|_{\HH^1}<\kappa$ we have that
\[
\frac{1}{w_0+g}=\frac{1}{w_0}\left(\frac{1}{1+(1/w_0)g}\right)=
(1/w_0)\sum_{k=0}^\infty(-1)^k[(1/w_0)g]^k
\]
where the series converges in $\HH^1$ uniformly in $\|g\|_{\HH^1}<\kappa$ since
\[
\|(1/w_0)g\|_{\HH^1}\le C\|1/w_0\|_{\HH^1}\|g\|_{\HH^1}<1/2.
\]
This completes the proof of the first statement of the Lemma.
The second statement follows from the Banach algebra property of $\HH^1$.
\end{proof}

It follows from \eqref{eq:U^f} that 
$(D_1 V_f)(\delta w)=-\1 1,(f/w^2)\delta w\2=-\1\check{f}/\check{w}^2,\delta w\2$
for any $w\in\OO$ and $\delta w\in\HH^{1,+}_0$. This implies that
\begin{equation}\label{eq:U^f-gradient}
\nabla_w V_f=-\Pi^+_0\big(\check{f}/\check{w}^2\big)
\end{equation}
for any $w\in\OO$. It then follows from \eqref{eq:U^f-gradient} and 
Lemma \ref{lem:1/w-analytic} that $\nabla_w V_f\in\HH^{1,+}_0$ and that the map
\begin{equation}\label{eq:U^f-gradient_map}
\OO\to\HH^{1,+}_0,\quad w\mapsto\nabla_w V_f,
\end{equation}
is analytic.
As in the Introduction, we now fix $f\in\HH^1$ and consider the Hamiltonian
\begin{equation}\label{eq:H_f}
H_f(w,p)=H(w,p)+V_f(w),\quad (w,p)\in\OO\times\HH^{1,+}_0\,.
\end{equation}
It follows from Lemma \ref{lem:1/w-analytic} and the analyticity of $H$ that
the function $H_f : \mathcal{O}\times\HH^{1,+}_0\to\C$ is analytic. 
We will now prove that the Hamiltonian flow of $H_f$ admits an infinite family of 
integrals in involution on $\mathcal{O}\times\HH^{1,+}_0$.
The proof is based on the following Lemma. As in the Introduction, for any $k\ge 1$ and 
$w\in\OO$ consider the individual potential
\begin{equation}\label{eq:U_k}
U_k^f(w)=-\big(w\,\Pi^+_0(f/w)\big)_{k+1}=-\sum_{1\le j\le k}w_j\big\1 f/w,e_{k+1-j}\big\2\,.
\end{equation}
It follows from from \eqref{eq:U_k} and Lemma \ref{lem:1/w-analytic} that the map,
\begin{equation}\label{eq:U_k-map} 
\OO\to\C,\quad w\mapsto U_k^f(w),
\end{equation}
is analytic. We have that
\begin{equation}\label{eq:gradient U_k}
\big\1\nabla U_k^f(w),\delta w\big\2=\sum_{1\le j\le k}\big\1(f/w^2)(\delta w),e_{k+1-j}\big\2\,w_j
-\big\1 f/w,e_{k+1-j}\big\2\,\delta w_j,
\end{equation}
and hence, 
$\nabla_w U_k^f(w)=
\sum_{1\le j\le k}w_j\Pi^+_0\big((\check{f}/\check{w}^2)\,e_{k+1-j}\big)
-\big\1 f/w,e_{k+1-j}\big\2\,e_j$ .
This implies that $\nabla_w U_k^f$ belongs to $\HH^{1,+}_0$ and the map
\begin{equation}\label{eq:U_k-gradient_map}
\OO\to\HH^{1,+}_0,\quad w\mapsto\nabla_w U_k^f,
\end{equation}
is analytic for any $k\ge 1$.
Recall that $A_k(w)^T$ denotes the adjoint operator to $A_k(w)$ with
respect to the pairing \eqref{eq:L^2-metric(H0)} in $\HH^{1,+}_0$
(cf. \eqref{eq:A_k-adjoint}).

\begin{Lemma}\label{lem:U_k}
For any given $f\in\HH^1$ and $k\ge 1$ and the function 
$U_k^f : \OO\to\C$ is analytic and
\begin{equation}\label{eq:U_k-property}
A_k(w)^T(\nabla_w V_f)=\nabla_w U_k^f,\quad w\in\OO,
\end{equation}
where $(a)_k$ is the $k$-th Fourier efficient of $a\in\HH^1$.
\end{Lemma}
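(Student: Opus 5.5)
The identity \eqref{eq:U_k-property} is an equality between elements of $\HH^{1,+}_0$ (or of $\mathcal{D}^+_0$). Such an element is determined by its pairings $\1\cdot,\delta w\2$ with all $\delta w\in\HH^{1,+}_0$, for instance with $\delta w=e_l$, $l\ge 1$. So the plan is to fix $w\in\OO$ and $\delta w\in\HH^{1,+}_0$ and show that
\[
\big\1 A_k(w)^T(\nabla_w V_f),\delta w\big\2=\big\1\nabla_w U_k^f(w),\delta w\big\2 .
\]
The analyticity of $U_k^f$ was already recorded in \eqref{eq:U_k-map}, via Lemma \ref{lem:1/w-analytic}, so only the identity needs proof.

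By the definition \eqref{eq:A_k-adjoint} of the adjoint, the left side equals $\1\nabla_w V_f,A_k(w)\delta w\2=(D_1V_f)(A_k(w)\delta w)$. Since $A_k(w)\delta w\in\HH^{1,+}_0$, the computation preceding \eqref{eq:U^f-gradient} gives
\[
(D_1V_f)(A_k(w)\delta w)=-\big\1 1,(f/w^2)\,A_k(w)\delta w\big\2 .
\]
Next I substitute the shifted form \eqref{eq:A_k-shifts_bis} of the geodesic operator,
\[
A_k(w)\delta w=w\,\Pi^-_0\big(L^{k+1}\delta w\big)-\delta w\,\Pi^-_0\big(L^{k+1}w\big),
\]
together with the finite expansions $\Pi^-_0(L^{k+1}\delta w)=\sum_{1\le j\le k}\delta w_j\,e_{j-k-1}$ and $\Pi^-_0(L^{k+1}w)=\sum_{1\le j\le k}w_j\,e_{j-k-1}$. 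Both hold because $w$ and $\delta w$ have no nonpositive Fourier modes. Substituting, the left side becomes
\[
-\sum_{1\le j\le k}\delta w_j\,\big\1 1,(f/w)\,e_{j-k-1}\big\2+\sum_{1\le j\le k}w_j\,\big\1 1,(f/w^2)\,\delta w\,e_{j-k-1}\big\2 .
\]

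It remains to use the elementary identity $\1 1,g\,e_{-m}\2=(g)_m=\1 g,e_m\2$ for $g\in\HH^1$. This follows from \eqref{eq:L2-pairing(H)}, since $\1 1,h\2=h_0$ and $\1 g,e_m\2=g_m$. Applying it with $m=k+1-j$, the two sums become
\[
-\sum_{1\le j\le k}\big\1 f/w,e_{k+1-j}\big\2\,\delta w_j+\sum_{1\le j\le k}w_j\big\1(f/w^2)\delta w,e_{k+1-j}\big\2,
\]
which is exactly the right side of \eqref{eq:gradient U_k}, i.e.\ $\1\nabla_wU_k^f(w),\delta w\2$. Since $\delta w$ is arbitrary, \eqref{eq:U_k-property} follows. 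In particular $A_k(w)^T(\nabla_w V_f)$ lies in $\HH^{1,+}_0$ and depends analytically on $w$.

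The step that needs the most care is the index and sign bookkeeping. The $e_{-(k+1)}$ factor coming from $L^{k+1}$ must be matched correctly against the convention $\1 g,e_l\2=g_l$. It is also necessary to check that the $w\,\Pi^-_0(L^{k+1}\delta w)$ part of $A_k(w)\delta w$ produces the $\delta w_j$ terms of \eqref{eq:gradient U_k}, and the $\delta w\,\Pi^-_0(L^{k+1}w)$ part produces the $w_j$ terms, each with the correct sign. Every other step is formal.
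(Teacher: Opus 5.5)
Your proposal is correct and follows essentially the same route as the paper. Both arguments pass to $\1\nabla_w V_f,A_k(w)\delta w\2$, substitute the shifted form \eqref{eq:A_k-shifts_bis} with the finite expansion over $1\le j\le k$, and match the result term by term with \eqref{eq:gradient U_k}. The only cosmetic difference is that you write the pairing as $-\1 1,(f/w^2)\,\cdot\,\2$, whereas the paper writes $-\1\,\cdot\,,\check{f}/\check{w}^2\2$; these are equal by \eqref{eq:check-move}.
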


\begin{proof}[Proof of Lemma \ref{lem:U_k}]
Take $f\in\HH^1$ and $k\ge 1$.
For any $w\in\mathcal{O}$ and $\delta w\in\HH^{1,+}_0$ we then 
obtain from \eqref{eq:A_k-shifts_bis} and \eqref{eq:U^f-gradient} that
\begin{align*}
\big\1 A_k(w)^T(\nabla_w V_f),\delta w\big\2&=\big\1\nabla_w V_f,A_k(w)\delta w\2
=-\big\1w\,\Pi^-_0\big(L^{k+1}\delta w\big)-\delta w\,\Pi^-_0\big(L^{k+1} w\big),
\check{f}/\check{w}^2\big\2\nonumber\\
&=\sum_{1\le j\le k}
\big\1(\delta w)\,w_j\,e_{j-k-1},\check{f}/\check{w}^2\big\2
-\big\1 w\,(\delta w_j)\,e_{j-k-1},\check{f}/\check{w}^2\big\2\\
&=\sum_{1\le j\le k}
\big\1(f/w^2)(\delta w),e_{k+1-j}\big\2\,w_j
-\big\1 f/w,e_{k+1-j}\big\2\,\delta w_j\\
&=\1\nabla_w U_k^f,\delta w\2
\end{align*}
where we used \eqref{eq:gradient U_k} to conclude the last equality.
This completes the proof of the Lemma.
\end{proof}

\noindent As in the Introduction, for any given $k\ge 1$ and 
$(w,p)\in\mathcal{O}\times\HH^{1,+}_0$ consider the expression
\begin{equation}\label{eq:J_k}
J_k^f(w,p)=I_k(w,p)+U_k^f(w)\,.
\end{equation}
It follows from Proposition \ref{prop:I-formulas} and the analyticity of \eqref{eq:U_k-map}
that the map $J_k^f : \mathcal{O}\times\HH^{1,+}_0\to\C$ is 
analytic for any $k\ge 1$. By summarizing the above, be obtain the following analog of 
Corollary \ref{coro:X_I_k-vector-field(H)}.

\begin{Corollary}\label{coro:X_J_k-vector-field(H)}
Take $f\in\HH^1$. Then we have:
\begin{itemize}
\item[(i)] The functions $H_f, J_k^f : \OO\times\HH^{1,+}_0\to\C$ are
analytic for any $k\ge 1$.
\item[(ii)] The maps 
$X_{H_f}, X_{J_k^f} : \OO\times\HH^{1,+}_0\to\HH^{1,+}_0\times\HH^{1,+}_0$
are well-defined and analytic. 
\end{itemize}
\end{Corollary}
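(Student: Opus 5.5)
Both parts reduce to gradient computations already carried out above. For (i), I will use the decompositions $H_f=H+V_f$ and $J_k^f=I_k+U_k^f$ (see \eqref{eq:H_f} and \eqref{eq:J_k}). Analyticity of $H$ on $\HH^{1,+}_0\times\HH^{1,+}_0$ is immediate from the Banach algebra property. Analyticity of $I_k$ comes from Proposition \ref{prop:I-formulas}(i). Analyticity of $V_f$ is Lemma \ref{lem:1/w-analytic}, and analyticity of $U_k^f$ is the analyticity of \eqref{eq:U_k-map}. Restricting to the open set $\OO\times\HH^{1,+}_0$ then gives analyticity of $H_f$ and $J_k^f$.

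For (ii), I will use \eqref{eq:X_F}, which gives
\[
X_{H_f}=\big(\nabla_pH,\,-\nabla_wH-\nabla_wV_f\big),\qquad X_{J_k^f}=\big(\nabla_pI_k,\,-\nabla_wI_k-\nabla_wU_k^f\big).
\]
This holds because $V_f$ and $U_k^f$ do not depend on $p$. The terms $\nabla_pH=2\Pi^+_0(w\check p)$ and $\nabla_wH=p^2$ (see \eqref{eq:H-gradients}) are analytic maps into $\HH^{1,+}_0$. The components of $X_{I_k}$ are analytic into $\HH^{1,+}_0$ by Corollary \ref{coro:X_I_k-vector-field(H)}. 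The potential contributions $\nabla_wV_f$ and $\nabla_wU_k^f$ are analytic maps $\OO\to\HH^{1,+}_0$ by \eqref{eq:U^f-gradient_map} and \eqref{eq:U_k-gradient_map}. Sums of analytic maps are analytic, and composing with the analytic projection $\OO\times\HH^{1,+}_0\to\OO$ preserves analyticity, so both vector fields are well-defined analytic maps into $\HH^{1,+}_0\times\HH^{1,+}_0$.

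The one point requiring care is the term $\nabla_wV_f=-\Pi^+_0(\check f/\check w^2)$. I need that $w\mapsto\check f/\check w^2$ is analytic from $\OO$ into $\HH^1$. This holds because $w\mapsto 1/w$ is analytic into $\HH^1$ (Lemma \ref{lem:1/w-analytic}), the involution $g\mapsto\check g$ is a linear isometry of $\HH^1$, and multiplication in $\HH^1$ is continuous. Applying the bounded projection $\Pi^+_0$ then lands in $\HH^{1,+}_0$. The same reasoning covers the explicit formula for $\nabla_wU_k^f$ given after \eqref{eq:gradient U_k}. That formula is a finite sum of terms $w_j\Pi^+_0\big((\check f/\check w^2)e_{k+1-j}\big)$ and $\langle f/w,e_{k+1-j}\rangle e_j$. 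Each term is analytic in $w$, since the coefficient maps $w\mapsto w_j$ and $w\mapsto\langle f/w,e_{k+1-j}\rangle$ are analytic (the latter because $\langle\cdot,\cdot\rangle$ is a bounded bilinear form on $\HH^1$).
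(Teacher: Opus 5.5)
Your proposal is correct and follows the paper's proof essentially verbatim. Part (i) is assembled from the previously established analyticity of $H$, $I_k$, $V_f$, and $U_k^f$. Part (ii) uses the splittings $X_{H_f}=(2\Gamma(w)p,\,-p^2-\nabla_wV_f)$ and $X_{J_k^f}=(\nabla_pI_k,\,-\nabla_wI_k-\nabla_wU_k^f)$, together with Corollary \ref{coro:X_I_k-vector-field(H)} and the analyticity of the gradient maps \eqref{eq:U^f-gradient_map} and \eqref{eq:U_k-gradient_map}. Your extra check that $w\mapsto\Pi^+_0(\check f/\check w^2)$ and the explicit formula for $\nabla_wU_k^f$ are analytic just spells out what the paper asserts in one line before the Corollary.
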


\begin{proof}[Proof of Corollary \ref{coro:X_J_k-vector-field(H)}]
Item (i) is proved above.
Let us prove (ii). For any given $k\ge 1$ and $(w,p)\in\OO\times\HH^{1,+}_0$
we obtain from \eqref{eq:H_f}, \eqref{eq:J_k}, and  \eqref{eq:H-gradients}, that
\begin{align}\label{eq:X_H_f}
X_{H_f}(w,p)&=\big(\nabla_p H_f,-\nabla_w H_f\big)
=\big(2\Gamma(w)p,-p^2-\nabla_w V_f\big)
\end{align}
\begin{align}\label{eq:X_J_k}
X_{J_k^f}(w,p)&=\big(\nabla_p J_k^f,-\nabla_w J_k^f\big)
=\big(\nabla_p I_k,-\nabla_w I_k-\nabla_w U_k^f\big)\,.
\end{align}
Item (ii) then follows from \eqref{eq:X_H_f}, \eqref{eq:X_J_k},
Corollary \ref{coro:X_I_k-vector-field(H)},
and the analyticity of the maps \eqref{eq:U^f-gradient_map}
and \eqref{eq:U_k-gradient_map}.
\end{proof}
 
\noindent Let us now prove Theorem \ref{th:J_k-involutivity} stated in the Introduction.

\begin{proof}[Proof of Theorem \ref{th:J_k-involutivity}]
The Theorem follows from Theorem \ref{th:involutivity} and Lemma \ref{lem:U_k}.
In fact, take $f\in\HH^1$. Then, for any $k\ge 1$ and 
$(w,p)\in\OO\times\HH^{1,+}_0$ we obtain from Theorem \ref{th:involutivity}
and Lemma \ref{lem:U_k} that
\begin{align*}
\{H_f,J_k^f\}&=\{H_f+V_f,I_k+U_k^f\}=\{H_f,U_k^f\}-\{I_k,V_f\}
=\1\nabla_p H_f,\nabla_w U_k^f\2-\1\nabla_p I_k,\nabla_w V_f\2\\
&=2\1\Gamma(w)p,A_k(w)^T\nabla_w V_f\2-2\1 A_k(w)\Gamma(w)p,\nabla_w V_f\2=0
\end{align*}
where we used \eqref{eq:A_k-adjoint} and the fact that, by 
\eqref{eq:A(w)H(w)-symmetric} and \eqref{eq:I_k},
\begin{equation}\label{eq:D_pI_k}
\big(\nabla_p I_k\big)(w,p)=2 A_k(w)\Gamma(w)p
\end{equation}
for any $k\ge 1$ and $(w,p)\in\OO\times\HH^{1,+}_0$.
Similarly, for any $k,l\ge 1$ and $(w,p)\in\OO\times\HH^{1,+}_0$ we have that
\begin{align*}
\{J_k^f,J_l^f\}&=\{I_k+U_k^f,I_l+U_l^f\}=\{I_k,U_l^f\}-\{I_l,U_k^f\}
=\1\nabla_p I_k,\nabla_w U_l^f\2-\1\nabla_p I_l,\nabla_w U_k^f\2\\
&=2\1A_k(w)\Gamma(w)p,A_l(w)^T\nabla_w V^f\2-2\1 A_l(w)\Gamma(w)p,A_k(w)^T\nabla_w V^f\2=0
\end{align*}
in view of the commutativity of the operators $A_k(w)$ and $A_l(w)$ 
(cf. Proposition \ref{prop:A-properties_k}).
This completes the proof of the Theorem.
\end{proof}

Let us now turn to the momentum map of the integrals $J^f_k$, $k\ge 1$.
It follows from \eqref{eq:I(mu)}, \eqref{eq:U_k}, \eqref{eq:J_k}, and 
Proposition \ref{prop:I-formulas} (ii), that for any 
$(w,p)\in\OO\times\HH^{1,+}_0$ and $\mu\in\D$ we have
\begin{align}\label{eq:J(mu)}
J^f(\mu;w,p)&:=\sum_{k\ge 1}J_k^f(w,p)\,\mu^k
=\sum_{k\ge 1}\big(I_k(\mu;w,p)+U_k^f(w)\big)\,\mu^k\nonumber\\
&=\1 A(\mu,w)\Gamma(w) p,p\2+[U^f(w)](\mu)/\mu
\end{align}
where we set
\[
[U^f(w)](\mu):=-[w\,\Pi^+_0(f/w)](\mu)\,,
\]
and the series converge absolutely.
(In fact, since $w\,\Pi^+_0(f/w)\in\HH^{1,+}_0$,
the convergence is uniform on the closure of $\D$.)
In particular, by \eqref{eq:I(mu)-formula}, we have that for $\mu\in\D$,
\begin{align}\label{eq:J(mu)-formula}
J^f(\mu;w,p)=\frac{1}{\mu}\left([\Pi^+_0(w\check{p})]^2-w\,\Pi^+_0(w\check{p}^2)
-w\,\Pi^+_0(f/w)\right)(\mu)\,.
\end{align}
In the same way as in Proposition \ref{prop:I-formulas} (ii),
the right side in \eqref{eq:J(mu)-formula} defines a map
\begin{equation}\label{eq:J-momentum_map(H)}
J^f : \OO\times\HH^{1,+}_0\to\HH^{1,+}_0,\quad 
(w,p)\mapsto J^f(w,p),
\end{equation}
where 
\begin{equation}\label{eq:J(w,p)}
J^f(w,p)=S_-\left([\Pi^+_0(w\check{p})]^2-w\,\Pi^+_0(w\check{p}^2)
-w\,\Pi^+_0(f/w)\right)\,.
\end{equation}
This is exactly the momentum map \eqref{eq:J^f-introduction} considered in the Introduction.
By construction, the components of \eqref{eq:J(w,p)} are the integrals
$J_k^f(w,p)$, $k\ge 1$.
In view of \eqref{eq:J(w,p)} and the Banach algebra property of $\HH^{1,+}_0$
we obtain Proposition \ref{prop:J^f-introduction} stated in the Introduction.
In the same way as in \eqref{eq:I(mu)-map}, for any given $\mu\in\D$, 
we obtain an analytic function
\begin{equation}\label{eq:J(mu)-map}
J^f(\mu) : \OO\times\HH^{1,+}_0\to\C,\quad (w,p)\mapsto\delta_\mu[J^f(w,p)],
\end{equation}
where $\delta_\mu : \HH^{1,+}_0\to\C$, $g\mapsto g(\mu)$, is the delta function.
As in Section \ref{sec:involutivity_f=0}, for any given $\mu\in\D$
the function $\tilde{\delta}_\mu : \D\to\C$ given by \eqref{eq:tilde_delta2} belongs to 
$\HH^{1,+}_0$ and satisfies
\begin{equation}\label{eq:tilde_delta3}
\delta_\mu g=\1 \tilde{\delta}_\mu,g\2,\quad\forall g\in\HH^{1,+}_0\,.
\end{equation}
By arguing in the same way as in the proof of Lemma \ref{lem:X_I(mu)} we then
obtain from \eqref{eq:J(mu)-map} and \eqref{eq:tilde_delta3} the following
analog of Corollary \ref{coro:X_I(mu)-vector_field(H)}.

\begin{Corollary}\label{coro:X_J(mu)-vector_field(H)}
For any given $\mu\in\D$ the map
\[
X_{J^f(\mu)} : \OO\times\HH^{1,+}_0\to\HH^{1,+}_0\times\HH^{1,+}_0
\]
is well-defined and analytic. 
\end{Corollary}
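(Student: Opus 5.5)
We need to show that for each $\mu\in\D$ the map $X_{J^f(\mu)}=\big(\nabla_p J^f(\mu),-\nabla_w J^f(\mu)\big)$ takes values in $\HH^{1,+}_0\times\HH^{1,+}_0$ and depends analytically on $(w,p)\in\OO\times\HH^{1,+}_0$. The plan is to split $J^f(\mu)=I(\mu)+U^f(\mu)$, where
\[
[U^f(\mu)](w):=-\frac{1}{\mu}\big[w\,\Pi^+_0(f/w)\big](\mu)=-\frac{1}{\mu}\big\1\tilde{\delta}_\mu,w\,\Pi^+_0(f/w)\big\2 .
\]
This identity comes from \eqref{eq:J(mu)-formula}, \eqref{eq:I(mu)-formula} and \eqref{eq:tilde_delta3}. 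The part $I(\mu)$ is already covered by Corollary \ref{coro:X_I(mu)-vector_field(H)}: its Hamiltonian vector field, given explicitly in Lemma \ref{lem:X_I(mu)}, is analytic on all of $\HH^{1,+}_0\times\HH^{1,+}_0$, and hence on $\OO\times\HH^{1,+}_0$. Since $U^f(\mu)$ does not depend on $p$, we have $\nabla_p J^f(\mu)=\nabla_p I(\mu)$. It therefore remains to show that $\nabla_w U^f(\mu)$ lies in $\HH^{1,+}_0$ and depends analytically on $w\in\OO$.

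First, the singularity at $\mu=0$ is harmless. Since $w\,\Pi^+_0(f/w)\in\HH^{1,+}_0$ vanishes at $0$, the factor $1/\mu$ is removable. Concretely, we can write $[U^f(\mu)](w)=-\1 \tilde{\delta}_\mu/\mu,\,w\,\Pi^+_0(f/w)\2$, where $\tilde{\delta}_\mu(z)/\mu=z/(1-\mu z)$ belongs to $\HH^{1,+}_0$ for every $\mu\in\D$.

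Next we differentiate in $w$ in a direction $\delta w\in\HH^{1,+}_0$. The derivative of $w\mapsto 1/w$ is $-\delta w/w^2$, which is analytic by Lemma \ref{lem:1/w-analytic}. Setting $\eta_\mu:=\tilde{\delta}_\mu/\mu$, this gives
\[
\big(D_1U^f(\mu)\big)(\delta w)=-\big\1\eta_\mu,\delta w\,\Pi^+_0(f/w)\big\2+\big\1\eta_\mu,w\,\Pi^+_0(f\,\delta w/w^2)\big\2 .
\]
The next step is to move $\delta w$ to one side of the pairing, using \eqref{eq:check-move} and the fact that $\Pi^+_0$ is the transpose of $\Pi^-_0$-type projections under $\1\cdot,\cdot\2$. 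Precisely, $\1 a,\Pi^+_0 b\2=\1 \Pi^+ \check{}\,\ldots\2$ amounts to $\1 a,\Pi^+_0b\2=\1 \check{(\Pi^+_0\check a)}, b\2$, by comparing Fourier coefficients: $\1 a,b\2=\sum_k a_kb_{-k}$. Applying this to each term, we expect
\[
\nabla_w U^f(\mu)=-\Pi^+_0\big(\check{\eta}_\mu\,\widecheck{\Pi^+_0(f/w)}\big)+\Pi^+_0\Big(\frac{\check f}{\check w^2}\,\Pi^-_0\big(\check{\eta}_\mu\check w\big)\Big),
\]
up to checks on where the checks and projection indices land.

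Each ingredient in this expression is analytic in $w\in\OO$ with values in $\HH^1$. Indeed, $1/w$ and $1/w^2$ are analytic by Lemma \ref{lem:1/w-analytic}, the involution $\check{\ }$ and the projections \eqref{eq:Pi(H)} are bounded, and $\HH^1$ is a Banach algebra. Hence $\nabla_w U^f(\mu)\in\HH^{1,+}_0$ depends analytically on $w$. Combined with Corollary \ref{coro:X_I(mu)-vector_field(H)}, this yields the Corollary.

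The main obstacle is the bookkeeping in the transposition step: getting the correct projection ($\Pi^+_0$ versus $\Pi^-_0$, and the zero mode) after moving $\Pi^+_0$ across the pairing. This matters only for the explicit formula, not for membership in $\HH^{1,+}_0$. As an alternative route, one can bypass the explicit formula altogether. The functional $\delta w\mapsto (D_1U^f(\mu))(\delta w)$ is a finite sum of compositions of bounded operators on $\HH^1$ with pairing against the fixed element $\eta_\mu\in\HH^1$. Its Riesz representative is therefore the image of $\eta_\mu$ under the transposed bounded operators, which lies in $\HH^1$, and its projection lies in $\HH^{1,+}_0$. Analyticity of the representative follows from the analytic dependence of these operators on $w$, via \cite[Theorem 3.12]{Kato} as in the proof of Lemma \ref{lem:A-operator}.
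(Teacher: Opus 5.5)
Your strategy is the paper's. The paper obtains this corollary by writing $J^f(\mu)$ as a pairing with $\tilde\delta_\mu$ via \eqref{eq:tilde_delta3} and computing the gradients as in Lemma \ref{lem:X_I(mu)}. Splitting off $I(\mu)$ (covered by Corollary \ref{coro:X_I(mu)-vector_field(H)}) and using $\nabla_pJ^f(\mu)=\nabla_pI(\mu)$ is a sensible way to organize that.

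The problem is the one step with real content: the transposition that exhibits $\nabla_w U$ as an element of $\HH^{1,+}_0$. You used $\1 a,b\2=\sum_k a_kb_{-k}$, but in the paper $\1 a,b\2=\sum_k a_kb_k=\int_0^1 a\,\check b\,dy$. Hence $\Pi^+_0$ is its own transpose, $\1 a,\Pi^+_0 b\2=\1\Pi^+_0a,b\2$. Multiplications are moved by \eqref{eq:check-move}, and $\1 a,\delta w\2=\1\Pi^+_0a,\delta w\2$ for $\delta w\in\HH^{1,+}_0$. With $\eta_\mu=\tilde\delta_\mu/\mu$, your (correct) expression for $D_1U$ then gives
\[
\nabla_w U=-\Pi^+_0\Big(\eta_\mu\,\widecheck{\Pi^+_0(f/w)}\Big)+\Pi^+_0\Big(\Pi^+_0(\eta_\mu\check w)\,\frac{\check f}{\check w^2}\Big).
\]
Since $\eta_\mu=\sum_{k\ge 0}\mu^k e_{k+1}$, its $\mu^k$-coefficient is exactly the gradient of $U^f_k$ obtained from \eqref{eq:gradient U_k}. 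Your formula is not a harmless variant of this. With $\check\eta_\mu$ in place of $\eta_\mu$, its first term vanishes identically, because $\check\eta_\mu\,\widecheck{\Pi^+_0(f/w)}\in\HH^{1,-}_0$.

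More seriously, your claim that the bookkeeping ``matters only for the explicit formula, not for membership in $\HH^{1,+}_0$'' gets the logic backwards. Because the Poisson structure is weak, a bounded functional on $\HH^{1,+}_0$ in general has its gradient only in $\mathcal{D}^+_0$. That the gradient lies in $\HH^{1,+}_0$ is exactly what the corollary asserts.

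For the same reason, your fallback argument is false as stated. The transpose with respect to $\1\cdot,\cdot\2$ of a bounded operator on $\HH^1$ need not preserve $\HH^1$. For example, the rank-one operator $b\mapsto(\sum_{k\ge1}b_k)\,e_1$ is bounded on $\HH^{1,+}_0$, but its transpose sends $\eta$ to $\eta_1\sum_{k\ge1}e_k\notin\HH^1$.

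What saves the argument is the specific structure of $D_1U$. It is built only from multiplications by elements of $\HH^1$ and the projection $\Pi^+_0$. With respect to $\1\cdot,\cdot\2$, their transposes are multiplication by the reflected element (by \eqref{eq:check-move}) and $\Pi^+_0$ itself, and both are bounded on $\HH^1$. With that observation, or simply with the corrected formula above, analyticity follows as you say. It comes from Lemma \ref{lem:1/w-analytic}, boundedness of the projections and of $g\mapsto\check g$, and the Banach algebra property.
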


\noindent Note that it follows from \eqref{eq:A(w)H(w)-symmetric} and \eqref{eq:J(mu)} 
that for any given $\mu\in\D$ we have that
\begin{equation}\label{eq:nabla_pJ(mu)}
\nabla_p J^f(\mu)=2 A(\mu,w)\Gamma(w)p
\end{equation}
for any $(w,p)\in\OO\times\HH^{1,+}_0$.
Since $U^f(w)\in\HH^{1,+}_0$ for $w\in\OO$ we have that
$U^f(w)=\sum_{k\ge 1} U_k^f(w)\,e_k$ where $e_k$, $k\ge 1$, 
is the standard basis in $\HH^{1,+}_0$ and the series converges in $\HH^{1,+}_0$. 
In view of the analyticity of the map $\HH^{1,+}_0\to\HH^{1,+}_0$, $w\mapsto U^f(w)$, 
we then conclude that
$\1\nabla_w U^f,\delta w\2=\sum_{k\ge 1}\1\nabla_w U_k^f,\delta w\2\,e_k$
for any given $w\in\OO$ and $\delta w\in\HH^{1,+}_0$ where 
the series converges in $\HH^{1,+}_0$. In particular, we obtain that
$\1\nabla_w [U^f(w)](\mu),\delta w\2=\sum_{k\ge 1} \1\nabla_w U_k^f,\delta w\2\,\mu^k$
for any given $w\in\OO$ and $\delta w\in\HH^{1,+}_0$ where the series converges 
uniformly in $\mu\in\D$.
This implies that for any given $w\in\OO$ we have that
\[
\{H_f,J^f(\mu)\}=\sum_{k\ge 1}\{H_f,J_l^f\}\,\mu^k\,
\quad\text{and}\quad
\{J^f(\mu),J^f(\nu)\}=\sum_{k\ge 1}\sum_{l\ge 1}\{J_k^f,J_l^f\}\,\mu^k\nu^l
\]
for any $\mu,\nu\in\D$. By combining this with Theorem \ref{th:J_k-involutivity} we obtain 
the following addition to Theorem \ref{th:J_k-involutivity}.
 
\begin{Theorem}\label{th:J(mu)-involutivity}
Take $f\in\HH^1$. Then we have:
\begin{itemize}
\item[(i)] For any given $\mu\in\D$ the analytic function
\eqref{eq:J(mu)-map} is an integral of the Hamiltonian flow of $H_f$.
\item[(ii)] $\{J^f(\mu),J^f(\nu)\}=\{J^f(\mu),J_k^f\}=0$ for any $\mu,\nu\in\D$ and $k\ge 1$.
\end{itemize}
\end{Theorem}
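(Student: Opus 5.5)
The plan is to deduce Theorem \ref{th:J(mu)-involutivity} from Theorem \ref{th:J_k-involutivity} by expanding in powers of $\mu$ and $\nu$. The two termwise expansions stated just before the theorem have to be justified, and that justification is the real content.

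First, fix $(w,p)\in\OO\times\HH^{1,+}_0$ and write down the gradients of $J^f(\mu)$. By \eqref{eq:nabla_pJ(mu)} and the expansion \eqref{eq:A-expansion(H)} of Proposition \ref{prop:A-operator(H)} (iii),(iv),
\[
\nabla_p J^f(\mu)=2A(\mu,w)\Gamma(w)p=\sum_{k\ge 1}2A_k(w)\Gamma(w)p\,\mu^k=\sum_{k\ge1}\nabla_pJ^f_k\,\mu^k .
\]
Here I use \eqref{eq:D_pI_k} and $A_0(w)=0$, and the series converges absolutely in $\HH^{1,+}_0$ for $\mu\in\D$. For the $w$-gradient, split $J^f(\mu)=I(\mu)+[U^f(w)](\mu)/\mu$. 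Lemma \ref{lem:X_I(mu)} and the explicit formula show that $\nabla_wI(\mu)\in\HH^{1,+}_0$ depends analytically on $\mu\in\D$; its Taylor coefficients are $\nabla_wI_k$, because $I(\mu)=\sum_k I_k\mu^k$ converges uniformly on bounded sets of $(w,p)$, so derivatives in $w$ may be taken termwise. For the potential part, the paper has already shown that $\1\nabla_w[U^f(w)](\mu),\delta w\2=\sum_k\1\nabla_wU^f_k,\delta w\2\mu^k$.

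Second, I compute the brackets. Since all Hamiltonian fields take values in $\HH^{1,+}_0$ (Corollaries \ref{coro:X_J_k-vector-field(H)} and \ref{coro:X_J(mu)-vector_field(H)}), the bracket \eqref{eq:Poisson_bracket} $\{F,G\}=dG(X_F)$ is a genuine pairing. This gives
\[
\{H_f,J^f(\mu)\}=dJ^f(\mu)\big(X_{H_f}\big)=\sum_{k\ge1}dJ^f_k\big(X_{H_f}\big)\mu^k=\sum_{k\ge1}\{H_f,J^f_k\}\mu^k=0
\]
by Theorem \ref{th:J_k-involutivity} (i). Here I use that $\delta w\mapsto\sum_k dJ^f_k(\delta w,\delta p)\mu^k$ converges for each fixed direction, which is pointwise convergence of the $\mu$-expansion of the differential. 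In the same way, $\{J^f(\mu),J^f_k\}=dJ^f_k(X_{J^f(\mu)})$. I expand $X_{J^f(\mu)}=\sum_l X_{J^f_l}\mu^l$, which is convergent in $\HH^{1,+}_0\times\HH^{1,+}_0$ by the first step, and use continuity of $dJ^f_k$ to get $\sum_l\{J^f_l,J^f_k\}\mu^l=0$. Finally, $\{J^f(\mu),J^f(\nu)\}=dJ^f(\nu)(X_{J^f(\mu)})$: expand $X_{J^f(\mu)}$ in $\mu$, then expand $dJ^f(\nu)$ in $\nu$ on each fixed vector. The result is the absolutely convergent double series $\sum_{k,l}\{J^f_k,J^f_l\}\mu^k\nu^l=0$.

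The main obstacle is to show that the vector field $X_{J^f(\mu)}$ really equals $\sum_k X_{J^f_k}\mu^k$ with convergence in $\HH^{1,+}_0$, and not only weakly. The $p$-component is handled by Proposition \ref{prop:A-operator(H)}. For the $w$-component, I would use that $\nabla_wJ^f(\mu)$ is an analytic $\HH^{1,+}_0$-valued function of $\mu\in\D$, given explicitly as in Lemma \ref{lem:X_I(mu)} plus the $U^f$ term. By Cauchy estimates its Taylor series converges in norm on compact subsets of $\D$, and its coefficients are identified with $\nabla_wJ^f_k$ by pairing with arbitrary $\delta w$. With this in hand, all interchanges of sums and pairings are justified by continuity of the bilinear form \eqref{eq:L^2-metric(H0)}.
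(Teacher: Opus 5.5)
Your proposal is correct and follows essentially the same route as the paper: you expand $J^f(\mu)$, its gradients, and the brackets in powers of $\mu,\nu$, then reduce term by term to Theorem \ref{th:J_k-involutivity}. Your extra step of proving that $X_{J^f(\mu)}=\sum_k X_{J^f_k}\mu^k$ converges in norm (via the vector-valued analyticity of $\nabla_wJ^f(\mu)$ and Cauchy estimates) makes rigorous an interchange of sums and pairings that the paper only sketches through a weak, coefficient-wise expansion of $\nabla_wU^f$.
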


\begin{Remark}
Note that it follows from Lemma \ref{lem:U_k}, Proposition \ref{prop:A-operator(H)} (ii),
and the expansion in $\mu\in\D$ for the gradient $\nabla_w U^f$ above, that
\begin{equation*}
A(\mu,w)^T\big(\nabla_w V_f\big)=\nabla_w U^f(\mu)
\end{equation*}
for any given $\mu\in\D$  and $w\in\OO$. 
\end{Remark}

It follows from Corollary \ref{coro:X_J_k-vector-field(H)},
Corollary \ref{coro:X_J(mu)-vector_field(H)}, and the theorem on the existence and
uniqueness of solutions of ODEs in Banach spaces (see, e.g., \cite[Ch. IV, \S 1]{Lang})
that the Hamiltonian flows with Hamiltonians $H_f$, $J^f_k$, $k\ge 1$, and $J^f(\mu)$
where $f\in\HH^1$ and $\mu\in\D$ are locally uniquely defined on $\OO\times\HH^{1,+}_0$.
More specifically, we have the following Theorem.

\begin{Theorem}\label{th:existence}
For any given $(w_\bullet,p_\bullet)\in\OO\times\HH^{1,+}_0$ there exists 
an open neighborhood $\mathcal{V}$ of $(w_\bullet,p_\bullet)$ in $\OO\times\HH^{1,+}_0$ and $T>0$ 
such that for any initial data $(w_0,p_0)\in\mathcal{V}$ there exists a unique solution 
\begin{equation}\label{eq:(w,p)-solution}
(w,p)\in C^\ell\big([0,T),\OO\times\HH^{1,+}_0\big),\quad\forall\ell\ge 1,
\end{equation}
of the Hamiltonian equation with Hamiltonian $H_f$ such that $(w,p)|_{t=0}=(w_0,p_0)$.
The solution depends continuously on the initial data $(w_0,p_0)$ in the sense that
the data-to-solution map 
\[
\OO\times\HH^{1,+}_0\to C^\ell\big([0,T),\OO\times\HH^{1,+}_0\big),\quad
(w_0,p_0)\mapsto(w,p),
\]
is continuous.
The same holds with $H_f$ replaced by $J^f_k$, $k\ge 2$, and $J^f(\mu)$.
\end{Theorem}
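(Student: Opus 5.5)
The plan is to reduce Theorem \ref{th:existence} to the standard local existence and uniqueness theorem for ODEs in Banach spaces with analytic (hence locally Lipschitz) right-hand side, as in \cite[Ch. IV, \S 1]{Lang}, applied on the open set $\OO\times\HH^{1,+}_0$ of the Banach space $\HH^{1,+}_0\times\HH^{1,+}_0$.

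First, by Corollary \ref{coro:X_J_k-vector-field(H)} (ii) and Corollary \ref{coro:X_J(mu)-vector_field(H)}, each of the maps $X_{H_f}$, $X_{J^f_k}$, $k\ge 1$, and $X_{J^f(\mu)}$, $\mu\in\D$, is well-defined and analytic as a map $\OO\times\HH^{1,+}_0\to\HH^{1,+}_0\times\HH^{1,+}_0$. Hence the Hamiltonian equation $(\dot w,\dot p)=X(w,p)$ is a genuine ODE on the open subset $\OO\times\HH^{1,+}_0$; the weak character of the Poisson structure plays no role here. Since $\OO$ is open in $\HH^{1,+}_0$ (it is defined by a nonvanishing condition on $\T$, and $\HH^1$ embeds continuously into $C(\T,\C)$), we can pick a closed ball $\bar B$ of radius $2\rho$ around $(w_\bullet,p_\bullet)$ contained in $\OO\times\HH^{1,+}_0$. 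Shrinking $\rho$ if needed, analyticity gives that $X$ and its derivative $dX$ are bounded on $\bar B$, say by $M$ and by $K$. By the mean value inequality, $X$ is then Lipschitz on $\bar B$ with constant $K$.

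Set $\mathcal{V}$ to be the open ball of radius $\rho$ around $(w_\bullet,p_\bullet)$ and $T:=\rho/M$. For $(w_0,p_0)\in\mathcal{V}$, the Picard map
\[
\Phi(u)(t)=(w_0,p_0)+\int_0^t X(u(s))\,ds
\]
preserves the set of continuous curves $[0,T']\to\bar B$ for every $T'<T$, because the ball of radius $\rho$ around $(w_0,p_0)$ lies in $\bar B$. We will use the Picard map with a weighted sup norm $\sup_t e^{-2Kt}\|u(t)\|$; in this norm $\Phi$ is a contraction, so it has a unique fixed point. Uniqueness among all solutions that stay in $\OO\times\HH^{1,+}_0$ follows from Gronwall's inequality together with the local Lipschitz property. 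Regularity $C^\ell$ for every $\ell$ is obtained by bootstrapping: if $u\in C^\ell$, then $\dot u=X\circ u\in C^\ell$ because $X$ is analytic, so $u\in C^{\ell+1}$.

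For continuous dependence, Gronwall gives $\|u(t)-\tilde u(t)\|\le e^{Kt}\|u(0)-\tilde u(0)\|$, which is continuity in $C^0$. For the higher derivatives we use $u^{(j)}=(X\circ u)^{(j-1)}$, which is a polynomial expression in $d^iX(u)$ and lower derivatives of $u$. These derivatives of $X$ are uniformly continuous and bounded on a slightly smaller ball, by analyticity and Cauchy estimates. Induction on $j$ then yields continuity of the data-to-solution map into $C^\ell$. This induction is the only step needing care, namely obtaining uniform bounds on $d^iX$ on a neighborhood. We handle it by shrinking $\rho$ so that $X$ extends to a bounded analytic map on a ball of radius $3\rho$, and then applying Cauchy's estimates on the ball of radius $2\rho$. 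Working on the half-open interval $[0,T)$, and on compact subintervals of it, causes no issue.

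The same argument applies verbatim to $J^f_k$ and $J^f(\mu)$, since only the analyticity of the corresponding vector field on the open set $\OO\times\HH^{1,+}_0$ was used.
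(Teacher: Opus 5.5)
Your proposal is correct and follows the same route as the paper. The paper obtains the theorem directly from the analyticity of $X_{H_f}$, $X_{J^f_k}$ and $X_{J^f(\mu)}$ on the open set $\OO\times\HH^{1,+}_0$ (Corollaries \ref{coro:X_J_k-vector-field(H)} and \ref{coro:X_J(mu)-vector_field(H)}) together with the standard existence and uniqueness theorem for ODEs in Banach spaces \cite[Ch. IV, \S 1]{Lang}; you simply write out the Picard iteration, Gronwall and Cauchy-estimate details that the paper leaves to that citation.
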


\begin{Remark}\label{rem:existence}
Note that the time of existence $T>0$ in Theorem \ref{th:existence} is independent of
the choice of the regularity exponent $\ell\ge 1$.
In fact, since the corresponding Hamiltonian vector fields are analytic, 
the open neighborhood $\mathcal{V}$ and the time of existence $T>0$ can be chosen so that 
the solution \eqref{eq:(w,p)-solution} extends to an analytic map $\D_T\to\OO\times\HH^{1,+}_0$
(cf., e.g., \cite{Goursat}).
\end{Remark}

\begin{Remark}
In the case of the space $\W^{1,+}_{r,0}$, $r>1$, we consider the open set 
$\OO:=\big\{w\in\W^{1,+}_{r,0}\,\big|\,w(\mu)\ne 0\,\,\text{\rm for }\,1/r\le|\mu|\le r\big\}$
in $\W^{1,+}_{r,0}$. Then, by Proposition \ref{lem:1/w} (ii), for any choice of $f\in\W^1_r$ 
the potential $V_f(w):=\1 1,f/w\2$ is well-defined for any $w\in\OO$ and the map 
$V_f :\OO\to\C$, $w\mapsto V_f(w)$, is analytic. With this modification, 
the statements proved in this Section continue  to hold with $\HH^{1,+}_0$ replaced 
by $\W^{1,+}_{r,0}$.
\end{Remark}

\section{Finite dimensional dynamics and geodesic equivalence}\label{sec:finite_dynamics}
In this Section we consider a class of initial data $(w_0,p_0)\in\OO\times\HH^{1,+}_0$ 
(cf. Theorem \ref{th:existence}) that lead to a finite dimensional dynamics of 
the Hamiltonian system with Hamiltonian $H_f$, $f\in\HH^1$, and show that it is
related to the Hamiltonian flow of a (pseudo) Riemannian metric (with potential) 
of a special type.

\noindent{\em The reduced system}. 
Take $N\ge 2$ and consider the affine plane $\mathcal{F}_N$ in 
the configuration space $\HH^{1,+}_0$,
\[
\mathcal{F}_N:=\big\{w=w_1 e_1+...+w_N e_N+e_{N+1}\,\big|\,w_k\in\C,\,k=1,...,N\big\},
\]
as well as the submanifolds
\[
\mathcal{PF}_N:=(\OO\cap\mathcal{F}_N)\times
\big\{p=p_1 e_1+...+p_N e_N\,\big|\,\,p_k\in\C,\,k=1,...,N\big\}
\]
and
\[
\mathcal{HF}_N:=(\OO\cap\mathcal{F}_N)\times\HH^{1,+}_0
\]
in the phase space $\OO\times\HH^{1,+}_0$. 
Here $e_k$, $k\in\Z$, denotes denotes the basis \eqref{eq:e_l} in $\HH^1$.
Clearly, we have that $\mathcal{PF}_N\subseteq\mathcal{HF}_N$. We will also need the projection
\begin{equation}\label{eq:pi_N}
\pi_N : \mathcal{HF}_N\to\mathcal{PF}_N,\quad
(w,p)\mapsto(w,\Pi^+_{\le N}p),
\end{equation}
where $\Pi^+_{\le N} : \HH^{1,+}_0\to\HH^{1,+}_0$, $p\mapsto \sum_{k=1}^N p_k e_k$,
is the projection onto the first $N$ components of $\HH^{1,+}_0$.
We equip the $2N$ (complex) dimensional manifold $\mathcal{PF}_N$ with 
the canonical Poisson structure
\begin{equation}\label{eq:canonical_poisson_N}
\{w_k,w_l\}_N=\{p_k,p_l\}_N=0,\quad\{p_l,w_k\}_N=\delta_{lk},
\quad 1\le k,l\le N\,.
\end{equation}
We will also need the restriction of the Hamiltonian 
\begin{equation}\label{eq:H_f-tilde}
\widetilde{H}_f:=H_f\big|_{\mathcal{PF}_N}
\end{equation}
and the integrals 
\begin{equation}\label{eq:J_f-tilde}
\widetilde{J}_k^f:=J_k^f\big|_{\mathcal{PF}_N},\quad 1\le k\le N,
\end{equation}
to the submanifold $\mathcal{PF}_N$ (cf. \eqref{eq:J_k} for the definitions of the integrals).
One has the following Lemma.

\begin{Lemma}\label{lem:finite_dynamics}
Take $f\in\HH^1$ and $N\ge 2$. Then we have:
\begin{itemize}
\item[(i)] The submanifold $\mathcal{HF}_N$ is invariant with respect to the Hamiltonian flow
of $H_f$ and the integrals $J_k^f$, $k\ge 1$. Equivalently, we have that
$\{H_f,w_l\}|_{(w,p)}=\{J_k^f,w_l\}|_{(w,p)}=0$ for any $(w,p)\in\mathcal{HF}_N$, $k\ge 1$, 
and $l\ge N+1$.
\item[(ii)] For any $(w,p)\in\mathcal{HF}_N$ we have that
\[
(d_{(w,p)}\pi_N)\big(X_{H_f}\big)=X_{\widetilde{H}_f}\big(\pi_N(w,p)\big)
\quad\text{and}\quad
(d_{(w,p)}\pi_N)\big(X_{J_k^f}\big)=X_{\widetilde{J}_k^f}\big(\pi_N(w,p)\big)
\]
for any $k\ge 1$ where $\pi_N$ is the projection \eqref{eq:pi_N} and
$X_{\widetilde{H}_f}$ and $X_{\widetilde{J}_k^f}$ are the Hamiltonian vector fields on 
$\mathcal{PF}_N$ with respect to the canonical Poisson structure \eqref{eq:canonical_poisson_N}.
\item[(iii)] For any $(w,p)\in\mathcal{HF}_N$ we have that
\[
\{H_f,J_k^f\}|_{(w,p)}=\{\widetilde{H}_f,\widetilde{J}_k^f\}_N(\pi_N(w,p))
\quad\text{and}\quad
\{J_k^f,J_l^f\}|_{(w,p)}=\{\widetilde{J}_k^f,\widetilde{J}_l^f\}_N(\pi_N(w,p))
\]
for any $1\le k,l\le N$. In particular, the Hamiltonian $\widetilde{H}_f$ and
the functions $\widetilde{J}_k^f$, $1\le k\le N$, are in involution with respect to the 
Poisson structure \eqref{eq:canonical_poisson_N} on $\mathcal{PF}_N$.
\end{itemize}
\end{Lemma}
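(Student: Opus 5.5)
The plan is to exploit one structural fact: for $w\in\OO\cap\mathcal{F}_N$ the Hankel operator $\Gamma(w)$ only sees the first $N$ modes of $p$, and every $p$-gradient involved is a polynomial of degree at most $N$. Throughout I use that the $k$-th Fourier coefficient of $\nabla_p F$ (resp.\ $\nabla_w F$) equals $\partial F/\partial p_k$ (resp.\ $\partial F/\partial w_k$), since the pairing \eqref{eq:L^2-metric(H0)} is $\1 a,b\2=\sum a_kb_k$. Consequently the Hamiltonian vector field for the canonical structure \eqref{eq:canonical_poisson_N} is $\dot w_k=\partial F/\partial p_k$, $\dot p_k=-\partial F/\partial w_k$, consistent with \eqref{eq:X_F}.

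\emph{Step 1 (degree bounds, item (i)).} Take $w=\sum_{j\le N}w_je_j+e_{N+1}$. The $l$-th coefficient of $w\check p$ is $\sum_{m\ge1}w_{l+m}p_m$. This vanishes for $l\ge N+1$, so $\nabla_pH_f=2\Gamma(w)p\in\mathrm{span}\{e_1,\dots,e_N\}$. For $J_k^f$ we have $\nabla_pJ_k^f=\nabla_pI_k=2A_k(w)\Gamma(w)p$ by \eqref{eq:D_pI_k}, because $U^f_k$ does not depend on $p$. I then use the shift formula \eqref{eq:A_k-shifts},
\[
A_k(w)g=\sum_{1\le j\le k}\big(g_jS_+^{j-1}S_-^kw-w_jS_+^{j-1}S_-^kg\big),
\]
with $g=\Gamma(w)p$ of degree $\le N$. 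The function $S_-^kw$ has degree $\le N+1-k$ and $S_-^kg$ has degree $\le N-k$. After applying $S_+^{j-1}$ with $j\le k$, both terms have degree $\le N$. Hence the $w$-velocity has no components with index $\ge N+1$; in particular the coefficient $w_{N+1}=1$ is preserved, which is exactly $\{H_f,w_l\}=\{J_k^f,w_l\}=0$ for $l\ge N+1$. I expect this degree bookkeeping for $A_k$ to be the main point needing care, since it is the only place where the specific form of the geodesic operators enters.

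\emph{Step 2 (reduction identity, item (ii)).} The same coefficient computation gives $\Gamma(w)e_m=\Pi^+_0(we_{-m})=\sum_{l\ge1}w_{l+m}e_l=0$ for $m\ge N+1$ and $w\in\mathcal{F}_N$. By the symmetry \eqref{eq:A(w)H(w)-symmetric}, the bilinear forms $(p,q)\mapsto\1\Gamma(w)p,q\2$ and $(p,q)\mapsto\1A_k(w)\Gamma(w)p,q\2$ vanish whenever one argument lies in $\overline{\mathrm{span}}\{e_m:m>N\}$. Hence $F(w,p)=F(w,\Pi^+_{\le N}p)$ for all $w\in\OO\cap\mathcal{F}_N$ and all $p$, for $F\in\{H_f,J^f_k\}$; the potentials $V_f,U^f_k$ depend on $w$ only. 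The directions $e_1,\dots,e_N$ in $w$ are tangent to $\mathcal{F}_N$, and those in $p$ are unaffected by $\Pi^+_{\le N}$. Differentiating the identity therefore gives
\[
\partial F/\partial w_k(w,p)=\partial\widetilde F/\partial w_k(\pi_N(w,p)),\qquad \partial F/\partial p_k(w,p)=\partial\widetilde F/\partial p_k(\pi_N(w,p)),\qquad k\le N.
\]
The differential $d\pi_N$ keeps the $w$-velocity, which by Step 1 lies in $\mathrm{span}\{e_1,\dots,e_N\}$, and truncates the $p$-velocity to its first $N$ components. Combining this with the displayed equalities yields (ii).

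\emph{Step 3 (brackets, item (iii)).} In $\{F,G\}=\1\nabla_pF,\nabla_wG\2-\1\nabla_wF,\nabla_pG\2$, Step 1 shows that $\nabla_pF$ and $\nabla_pG$ are supported on the indices $1,\dots,N$. Therefore only the components $k\le N$ of $\nabla_wG$ and $\nabla_wF$ enter the two pairings. By Step 2 every component that enters equals the corresponding partial derivative of $\widetilde F$ or $\widetilde G$ at $\pi_N(w,p)$. The bracket thus becomes $\sum_{k\le N}\big(\partial_{p_k}\widetilde F\,\partial_{w_k}\widetilde G-\partial_{w_k}\widetilde F\,\partial_{p_k}\widetilde G\big)=\{\widetilde F,\widetilde G\}_N$. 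Involutivity of $\widetilde H_f$ and the $\widetilde J^f_k$, $1\le k\le N$, then follows from Theorem \ref{th:J_k-involutivity}, because every point of $\mathcal{PF}_N$ is of the form $\pi_N(w,p)$, e.g.\ with $p=\Pi^+_{\le N}p$.
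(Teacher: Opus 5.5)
Your proposal is correct and follows essentially the paper's route: on $\mathcal{HF}_N$ the functions $H_f$ and $J^f_k$ factor through $\pi_N$, and since $w_l,p_l$ are canonical coordinates, $(\nabla_pF)_l=\partial_{p_l}F$ and $(\nabla_wF)_l=\partial_{w_l}F$, which gives (ii) and (iii). The differences are cosmetic. The paper reads off the independence from $p_l$, $l\ge N+1$, directly from the explicit formula \eqref{eq:I_k-formula}, whereas you obtain it from $\Gamma(w)e_m=0$ for $m>N$ together with the symmetry \eqref{eq:A(w)H(w)-symmetric}; and the paper gets (i) as $\{J^f_k,w_l\}=\partial_{p_l}J^f_k=0$, so your separate degree count in Step 1 already follows from Step 2.
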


Lemma \ref{lem:finite_dynamics} implies the following characterization of
the Hamiltonian flow of $H_f$ with initial data on $\mathcal{HF}_N$.

\begin{Proposition}\label{prop:finite_dynamics}
Take $f\in\HH^1$, $N\ge 2$, and let $(w,p)\in C^\ell\big([0,T),\OO\times\HH^{1,+}_0\big)$,
$\ell\ge 2$, $T>0$, be the solution \eqref{eq:(w,p)-solution} of the Hamiltonian equation with 
Hamiltonian $H_f$ and initial data $(w_0,p_0)\in\mathcal{HF}_N$ (cf. Theorem \ref{th:existence}).
Then, the $w$-component of the solution belongs to $\mathcal{F}_N$,
$w\in C^\ell\big([0,T),\mathcal{F}_N\big)$, and the curve
\[
(w,\Pi^+_{\le N}p)\in C^\ell\big([0,T),\mathcal{PF}_N\big)
\]
is a solution of the Hamiltonian equation on $\mathcal{PF}_N$ with Hamiltonian 
$\widetilde{H}_f$ and initial data $(w_0,\Pi^+_{\le N}p_0)$. The flow of $\widetilde{H}_f$
on $\mathcal{PF}_N$ has $N$ integrals in involution.
These statements also hold with $H_f$ replaced by $J_k^f$, $1\le k\le N$.
\end{Proposition}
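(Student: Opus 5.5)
The plan is to deduce the Proposition directly from Lemma \ref{lem:finite_dynamics}, using the uniqueness part of Theorem \ref{th:existence}. Let $(w,p)$ be the solution of the Hamiltonian equation of $H_f$ with initial data $(w_0,p_0)\in\mathcal{HF}_N$.

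\emph{Step 1: $w(t)$ stays in $\mathcal{F}_N$.} For $l\ge N+1$ the equation gives $\dot w_l=\{H_f,w_l\}$, where $w_l=\langle w,e_l\rangle$ is a linear coordinate function. Lemma \ref{lem:finite_dynamics} (i) says this bracket vanishes only on $\mathcal{HF}_N$, so it cannot be integrated along a trajectory not yet known to lie there. I would avoid this circularity as follows.

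\begin{itemize}
\item[(a)] Regard the $w_l$, $l\ge N+1$, as coordinates transversal to $\mathcal{HF}_N$, which is a closed affine submanifold (intersected with $\OO$) of the phase space.
\item[(b)] Lemma \ref{lem:finite_dynamics} (i) then says that $X_{H_f}$ is tangent to $\mathcal{HF}_N$ at every point of it.
\item[(c)] Hence the restriction of $X_{H_f}$ to $\mathcal{HF}_N$ is an analytic vector field on the Banach manifold $\mathcal{HF}_N\cong(\OO\cap\mathcal{F}_N)\times\HH^{1,+}_0$. Analyticity follows from Corollary \ref{coro:X_J_k-vector-field(H)}.
\item[(d)] By the local existence theorem for ODEs, this restricted field has a solution starting at $(w_0,p_0)$ that remains in $\mathcal{HF}_N$.
\item[(e)] That solution is also a solution of the full equation, so by uniqueness in Theorem \ref{th:existence} it coincides with $(w,p)$ on its interval of existence.
\item[(f)] A standard continuation argument covers all of $[0,T)$, because $\mathcal{HF}_N$ is relatively closed in $\OO\times\HH^{1,+}_0$. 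Concretely, let $t^*$ be the supremum of times up to which the trajectory stays in $\mathcal{HF}_N$. By continuity and closedness the point at $t^*$ lies in $\mathcal{HF}_N$, and restarting from it contradicts maximality unless $t^*=T$.
\end{itemize}

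The regularity $w\in C^\ell([0,T),\mathcal{F}_N)$ is inherited from \eqref{eq:(w,p)-solution}.

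\emph{Step 2: the projected curve solves the reduced system.} Set $\gamma(t):=\pi_N(w(t),p(t))$. Since $\pi_N$ is the restriction of a bounded linear map, we have $\dot\gamma=(d_{(w,p)}\pi_N)(X_{H_f}(w,p))$. By Lemma \ref{lem:finite_dynamics} (ii) this equals $X_{\widetilde H_f}(\gamma)$. Thus $\gamma$ solves Hamilton's equations on $\mathcal{PF}_N$ for the canonical structure \eqref{eq:canonical_poisson_N}, with initial data $(w_0,\Pi^+_{\le N}p_0)$, and is $C^\ell$.

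\emph{Step 3: integrals.} Lemma \ref{lem:finite_dynamics} (iii) gives that $\widetilde H_f,\widetilde J^f_1,\dots,\widetilde J^f_N$ Poisson commute on $\mathcal{PF}_N$. Each of them is therefore an integral of $\widetilde H_f$, which yields the $N$ integrals in involution.

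\emph{Replacing $H_f$ by $J^f_k$.} Steps 1 and 2 repeat verbatim for $J^f_k$, $1\le k\le N$, using the parts of Lemma \ref{lem:finite_dynamics} (i), (ii) that concern $J^f_k$, together with the existence and uniqueness for $J^f_k$ in Theorem \ref{th:existence}. Step 3 then follows from the relation $\{\widetilde J^f_k,\widetilde J^f_l\}_N=0$ in Lemma \ref{lem:finite_dynamics} (iii).

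The main obstacle is Step 1: converting the pointwise tangency in Lemma \ref{lem:finite_dynamics} (i) into genuine invariance of the trajectory. Everything else is a direct reading of the Lemma.
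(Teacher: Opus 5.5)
Your proposal is correct and is essentially the paper's argument: the paper gives no separate proof and simply asserts that the Proposition follows from Lemma \ref{lem:finite_dynamics}, exactly as in your Steps 2 and 3. Your Step 1 usefully spells out the passage from pointwise tangency to invariance, which the paper compresses into the word ``Equivalently'' in Lemma \ref{lem:finite_dynamics}~(i): you restrict the analytic field $X_{H_f}$ to the relatively closed submanifold $\mathcal{HF}_N$ and invoke ODE uniqueness. One small citation fix: for $J^f_1$ you should use the general ODE theorem together with Corollary \ref{coro:X_J_k-vector-field(H)}, since Theorem \ref{th:existence} is literally stated only for $k\ge 2$.
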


\begin{Remark}
Proposition \ref{prop:finite_dynamics} suggests the following procedure for obtaining 
the solution of $X_{H_f}$ with initial data $(w_0,p_0)\in\mathcal{HF}_N$: We first obtain
the solution of $X_{\widetilde{H}_f}$ on $\mathcal{PF}_N$ with initial data $(w_0,\Pi^+_{\le N}p_0)$. 
We then substitute the $w$-component $t\mapsto w(t)$, $[0,T)\to\mathcal{F}_N$, 
of the obtained solution into $X_{H_f}$ and obtain the ordinary differential equation 
in $\HH^{1,+}_0$,
\[
\dot{p}=-\big(\nabla_w H_f\big)(w(t),p),\quad p|_{t=0}=p_0,
\]
on the $p$-component of the solution of $X_{H_f}$.
In this sense, we can say that the solutions of $X_{H_f}$
exhibit a finite dimensional dynamics on the submanifold $\mathcal{HF}_N$.
The above also holds with $H_f$ replaced by $J_k^f$,$1\le k\le N$.
Lemma \ref{lem:finite_dynamics} (and Proposition \ref{prop:finite_dynamics}) can be 
considered as an instance of a Hamiltonian reduction in infinite dimensions.
\end{Remark}

\begin{proof}[Proof of Lemma \ref{lem:finite_dynamics}]
We will prove the Lemma in the case of the integrals. The case of the Hamiltonian $H_f$
follows in the same way. Take $f\in\HH^1$ and $N\ge 2$ and recall from \eqref{eq:I_k-formula}, 
\eqref{eq:U_k}, and \eqref{eq:J_k}, that for $(w,p)\in\OO\times\HH^{1,+}_0$ and $k\ge 1$,
\[
J_k^f(w,p)=I_k(w,p)+U_k^f(w)
\]
where
\[
I_k(w,p)=\sum_{1\le j\le k}\1 S_-^j w, p\2 \1 S_+^{j-1}S_-^k w,p\2-
w_j\1 S_-^kw,p\,S_-^{j-1}p\2\big)\quad\text{and}\quad
U_k^f(w)=-\big\1 w\Pi^+_0(f/w),e_{k+1}\big\2\,.
\]
Now, assume that $w\in\OO\cap\mathcal{F}_N$ and choose $k\ge 1$. 
Then, one sees from the expressions above 
that $J_k^f(w,p)$ does not depend on the variables $p_j$, $j\ge N+1$, or equivalently,
\begin{equation}\label{eq:J_k-fiber}
J_k^f(w,p)=\widetilde{J}^f_k(\pi_N(w,p))\quad\text{for}\quad(w,p)\in\mathcal{HF}_N.
\end{equation}
It follows easily from the definition of the Poisson structure \eqref{eq:Poisson_bracket} that
the coordinates $w_l, p_l, l\ge 1$, on $\HH^{1,+}_0\times\HH^{1,+}_0$ are canonical,
\[
\{w_m,w_l\}=\{p_m,p_l\}=0,\quad\{p_l,w_m\}=\delta_{lm},\quad m,l\ge 1.
\]
Denote by $\partial_{w_l}$ and $\partial_{p_l}$ the partial derivatives with
respect to the coordinates $w_l, p_l, l\ge 1$, on $\HH^{1,+}_0\times\HH^{1,+}_0$.
We have that
\begin{equation}\label{eq:J_k-fiber_bis}
L_{X_{J_k^f}} w_l=\{J_k^f,w_l\}=\partial_{p_l} J_k^f=0,\quad l\ge N+1,
\end{equation}
where we used \eqref{eq:J_k-fiber} to conclude that the partial derivative of $J_k^f$ with 
respect to the variable $p_l$ vanishes and where $L_X$ denotes the directional derivative
in the direction of a vector field $X$.
This proves item (i). Similarly, for $1\le l\le N$,
\begin{equation}\label{eq:J_k-fiber_ter}
L_{X_{J_k^f}} w_l=\{J_k^f,w_l\}=
\partial_{p_l} J_k^f,\quad L_{X_{J_k^f}} p_l=\{J_k^f,p_l\}=
-\partial_{w_l}J_k^f,
\end{equation}
and
\begin{equation}\label{eq:J_k-fiber_quater}
L_{X_{\widetilde{J}_k^f}} w_l=\{\widetilde{J}_k^f,w_l\}_N=\partial_{p_l}\widetilde{J}_k^f
=\partial_{p_l} J_k^f,\quad
L_{X_{\widetilde{J}_k^f}} p_l=\{\widetilde{J}_k^f,p_l\}_N=-\partial_{w_l}\widetilde{J}_k^f
=-\partial_{w_l} J_k^f\,.
\end{equation}
Items (ii) and (iii) now follow from \eqref{eq:J_k-fiber_bis}, \eqref{eq:J_k-fiber_ter},
\eqref{eq:J_k-fiber_quater}, and \eqref{eq:J_k-fiber}.
\end{proof}

\medskip

\noindent{\em The structure of the momentum map on $\mathcal{HF}_N$.}
Let us now study the structure of the momentum map \eqref{eq:J(w,p)} on $\mathcal{HF}_N$ and
its dependence on the choice of $f\in\HH^1$. Take $f\in\HH^1$ and $(w,p)\in\OO\times\HH^{1,+}_0$. 
Then, by \eqref{eq:I-momentum_map(H)} and \eqref{eq:J(w,p)}, we have that
\begin{equation}\label{eq:J(w,p)_bis}
J^f(w,p)=I(w,p)-S_-\left(w\,\Pi^+_0(f/w)\right)
\end{equation}
where $I(w,p)=S_-\left([\Pi^+_0(w\check{p})]^2-w\,\Pi^+_0(w\check{p}^2)\right)$
are the integrals in the potential-free case (cf. Section \ref{sec:integrals_f=0}).
We rewrite the term depending on $f\in\HH^1$ in \eqref{eq:J(w,p)_bis} as follows
\begin{align}
w\,\Pi^+_0(f/w)&=w\,\big(f/w-\Pi^-(f/w)\big)=f-w\,\Pi^-(f/w)\nonumber\\
&=\big(\Pi^+_0 f-\Pi^+_0\big(w\,\Pi^-(f/w)\big)\big)+
\big(\Pi^- f-\Pi^-\big(w\,\Pi^-(f/w)\big)\big)\nonumber\\
&=\Pi^+_0 f-\Pi^+_0\big(w\,\Pi^-(f/w)\big)\,.\label{eq:equality_integrals}
\end{align}
The term $\Pi^- f-\Pi^-\big(w\,\Pi^-(f/w)\big)$ vanishes since
$w\,\Pi^+_0(f/w)$ and $\Pi^+_0 f-\Pi^+_0\big(w\,\Pi^-(f/w)\big)$ belong to $\HH^{1,+}_0$.
By combining this with \eqref{eq:J(w,p)_bis} we conclude that
\begin{equation}\label{eq:J(w,p)_ter}
J^f(w,p)=I(w,p)+S_-\left(\Pi^+_0\big(w\,\Pi^-(f/w)\big)-\Pi^+_0f\right)\,.
\end{equation}
For later use, we also note that the term $w\,\Pi^+_0(f/w)$, and hence,
by \eqref{eq:equality_integrals}, the term $\Pi^+_0 f-\Pi^+_0\big(w\,\Pi^-(f/w)\big)$, 
are spanned on the vectors $e_k$, $k\ge 2$. We have the following Lemma.

\begin{Lemma}\label{lem:J(w,p)-decomposition}
Assume that $(w,p)\in\mathcal{HF}_N$. Then for any choice of $f\in\HH^1$ we have that
\[
J^f(w,p)=\sum_{k=1}^N J_k^f(w,p)\,e_k+\sum_{k\ge N+1} f_{k+1}\,e_k
\]
where the integrals $J_k^f(w,p)$, $1\le k\le N$, are defined in \eqref{eq:J_k}.
If in addition, the roots of the polynomial $w(z)=w_1z+...+w_N z^N+z^{N+1}$ are contained in 
the unit disk $\D$ then the integrals $J_k^f(w,p)$, $1\le k\le N$, do not
depend on $f_k$ with $k\le N+1$.
\end{Lemma}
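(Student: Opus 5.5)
The plan is to start from the representation \eqref{eq:J(w,p)_ter},
\[
J^f(w,p)=I(w,p)+S_-\Big(\Pi^+_0\big(w\,\Pi^-(f/w)\big)-\Pi^+_0f\Big),
\]
and to do Fourier-support bookkeeping on each term, using that for $(w,p)\in\mathcal{HF}_N$ the position $w=w_1e_1+\dots+w_Ne_N+e_{N+1}$ is a trigonometric polynomial with modes in $\{1,\dots,N+1\}$. Since $S_-$ shifts modes down by one, the $k$-th component of $J^f(w,p)$ is $I_k(w,p)$ plus the $(k+1)$-th coefficient of the $f$-dependent bracket. It then suffices to show two things: $I_k(w,p)=0$ for $k\ge N+1$, and the bracket has no modes $\ge N+2$ other than those of $\Pi^+_0 f$.

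\emph{Vanishing of $I_k$ for $k\ge N+1$.} I will use $I_k=\1 A_k(w)\Gamma(w)p,p\2$ together with \eqref{eq:A_k-shifts_bis},
\[
A_k(w)g=w\,\Pi^-_0(L^{k+1}g)-g\,\Pi^-_0(L^{k+1}w).
\]
\begin{itemize}
\item First, $g:=\Gamma(w)p=\Pi^+_0(w\check p)$ has modes only in $\{1,\dots,N\}$: $w$ has modes $\le N+1$ and $\check p$ has modes $\le -1$.
\item If $k+1\ge N+2$, then $L^{k+1}w$ and $L^{k+1}g$ have only negative modes. Hence $\Pi^-_0$ acts as the identity on both.
\item Consequently $A_k(w)g=w\,L^{k+1}g-g\,L^{k+1}w=0$, since $L$ is multiplication by $e_{-1}$.
\end{itemize}
So $I_k(w,p)=0$ for all $k\ge N+1$.

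\emph{Support of the $f$-term.} The function $\Pi^-(f/w)$ has modes $\le 0$ and $w$ has modes $\le N+1$. Hence $\Pi^+_0\big(w\,\Pi^-(f/w)\big)$ is supported on modes $1,\dots,N+1$, and after $S_-$ it lives on modes $\le N$. For $k\ge N+1$ the $k$-th component of $J^f(w,p)$ therefore reduces to the contribution of $-S_-\Pi^+_0 f$, namely $\mp f_{k+1}$. The sign has to be tracked carefully through \eqref{eq:J(w,p)_bis} and \eqref{eq:equality_integrals} and reconciled with the sign convention of $U^f_k$ in \eqref{eq:U_k}. I expect this sign bookkeeping to be the only delicate point, and I will re-check \eqref{eq:equality_integrals} there. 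For $k\le N$ the components are, by construction, $J^f_k(w,p)$, which gives the decomposition.

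\emph{Independence of low modes of $f$.} Assume the roots of $w(z)=w_1z+\dots+z^{N+1}$ lie in $\D$. The $f$-dependence of $J^f$ is linear in $f$, so it suffices to show that the bracket $\Pi^+_0(w\,\Pi^-(f/w))-\Pi^+_0 f$ vanishes whenever $f=\sum_{k\le N+1}f_ke_k$.
\begin{itemize}
\item The nonpositive modes of $f$ are harmless in any case, since they drop out under $\Pi^+_0 f$ and they stay in $\Pi^-$ after division. Even so, it is cleanest to treat all modes $\le N+1$ at once.
\item Write $f=z^{-m}P(z)$ with $P$ a polynomial of degree $\le N+1+m$. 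Then $f/w$ is holomorphic on $|z|>1$, because $w$ has no zeros there, and it is bounded at $\infty$, because $\deg w=N+1$.
\item Hence $f/w\in\HH^{1,-}$, which gives $\Pi^-(f/w)=f/w$ and so $w\,\Pi^-(f/w)=f$.
\item The bracket is then $\Pi^+_0 f-\Pi^+_0 f=0$. The terms $J^f_k$, $k\le N$, therefore depend on $f$ only through $f_k$ with $k\ge N+2$.
\end{itemize}
To justify the membership $f/w\in\HH^{1,-}$ rigorously, I will note that $1/w$ extends holomorphically across $|z|=1$ to the exterior. Its Laurent expansion at $\infty$ then converges on a neighbourhood of $|z|\ge 1$, so its coefficients decay geometrically.
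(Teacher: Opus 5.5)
Your route is essentially the paper's. Both start from \eqref{eq:J(w,p)_ter}, show $I_k(w,p)=0$ for $k\ge N+1$, and use that $w\,\Pi^-(f/w)$ has modes $\le N+1$. For the second statement, both use that under the root condition $f/w$ has only nonpositive modes when $f=\sum_{k\le N+1}f_ke_k$. The only variation is how you kill $I_k$. The paper reads it off \eqref{eq:I_k-formula}, where every term carries the factor $S_-^kw=0$. You instead note that $w$ and $\Gamma(w)p$ both have modes $\le N+1$, so $A_k(w)\Gamma(w)p=0$ by \eqref{eq:A_k-shifts_bis}. Both work.

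The point you defer as ``sign bookkeeping'' has a definite answer, and it disagrees with the statement. For $k\ge N+1$ we have $J_k^f=I_k+U_k^f=U_k^f=-(w\,\Pi^+_0(f/w))_{k+1}$, and by \eqref{eq:equality_integrals}
\[
(w\,\Pi^+_0(f/w))_{k+1}=f_{k+1}-\big(\Pi^+_0(w\,\Pi^-(f/w))\big)_{k+1}=f_{k+1}.
\]
So the $k$-th component is $-f_{k+1}$, which is exactly the contribution of $-S_-\Pi^+_0f$ you identified. Nothing in \eqref{eq:J(w,p)_bis}, \eqref{eq:equality_integrals} or \eqref{eq:U_k} can flip this sign. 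The displayed identity in the Lemma therefore has a typo and should read $J^f(w,p)=\sum_{k=1}^NJ_k^f(w,p)\,e_k-\sum_{k\ge N+1}f_{k+1}\,e_k$. The paper's own proof also concludes from \eqref{eq:J(w,p)_ter}, so it yields this minus sign too, without comment. Rather than writing $\mp$ and promising to ``reconcile'' with the stated $+$ sign, which cannot be done, commit to the minus sign and record the correction. The second statement of the Lemma is unaffected.

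One minor point concerns writing $f=z^{-m}P(z)$. That only covers Laurent polynomials, while $\sum_{k\le N+1}f_ke_k$ may have infinitely many negative modes. You can close this in either of two ways:
\begin{itemize}
\item Rely on your separate treatment of the nonpositive modes. That argument uses the root condition through $1/w\in\HH^{1,-}$ and the algebra property of $\HH^{1,-}$.
\item Write $f=e_{N+1}h$ with $h\in\HH^{1,-}$, and use $e_{N+1}/w=\prod_{j=1}^N(1-z_j/z)^{-1}\in\HH^{1,-}$, with $z_j$ as in \eqref{eq:w-polynomial}. This is the factorization \eqref{eq:f/w} used in the paper.
\end{itemize}
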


\begin{proof}[Proof of Lemma \ref{lem:J(w,p)-decomposition}]
The first statement of the Lemma follows from \eqref{eq:J(w,p)_ter}.
In fact, since $w\in\mathcal{F}_N$, the term $w\,\Pi^-(f/w)$ is spanned
on the vectors $e_k$, $k\le N+1$. This implies that
$S_-\left(\Pi^+_0\big(w\,\Pi^-(f/w)\big)\right)$
is spanned on $e_k$, $0\le k\le N$, and hence,
\[
S_-\Big(\Pi^+_0\big(w\,\Pi^-(f/w)\big)-\sum_{k=1}^{N+1}f_k\,e_k\Big)\in
\text{\rm Span}_\C\big(e_1,...e_N\big)\,.
\]
On the other side, since $S_-^kw=0$ for $w\in\mathcal{F}_N$ and $k\ge N+1$,
we obtain from \eqref{eq:I_k-formula} that $I_k(w,p)=0$ for any $k\ge N+1$,
$w\in\mathcal{F}_N$, and $p\in\HH^{1,+}_0$.
By combining this with \eqref{eq:J(w,p)_ter} we conclude the proof
of the first statement of the Lemma.
Let us now prove the second statement.
Assume that the roots of the polynomial $w(z)=w_1 z+...+w_N z^N+z^{N+1}$ are 
contained in the unit disk $\D$. Then,
\begin{equation}\label{eq:w-polynomial}
w(z)=z(z-z_1)\cdots(z-z_N),\quad z_1,...,z_N\in\D,
\end{equation}
and hence, for $z$ on the unit circle $z\in\mathbb{S}$,
\begin{equation}\label{eq:f/w}
\frac{f(z)}{w(z)}=\frac{f(z)}{z^{N+1}}\,\prod_{j=1}^N\frac{1}{1-z_j/z}
=\frac{f(z)}{z^{N+1}}\,\prod_{j=1}^N\left(\sum_{k\ge 0}\Big(z_j/z\Big)^k\right)
=\frac{f(z)}{z^{N+1}}\,g(z)
\end{equation}
where, in view of the identification \eqref{eq:f<->f(1/z)}, \eqref{eq:f(z)}, 
$g(z)$ is an element of $\HH^{1,-}$.
It now follows from \eqref{eq:f/w} and the fact that $g\in\HH^{1,-}$ that
\[
\Pi^+_0(f/w)=0\quad\text{for}\quad f=\sum_{k\le N+1} f_k\,e_k\,.
\]
By combining this with \eqref{eq:J(w,p)_bis} we then conclude the proof
of the second statement of the Lemma.
\end{proof}

\medskip

\noindent{\em The range of the $Q$-transform on $\mathcal{HF}_N$.}
Here we discuss the range of the map
\begin{equation}\label{eq:Q_f}
\OO\to\HH^{1,+},\quad w\mapsto Q_f(w):=\Pi^+(f/w^2),
\end{equation}
in the case when it is restricted to the submanifold $\OO\cap\mathcal{F}_N$, $N\ge 2$.
(The map \eqref{eq:Q_f} is related to the $Q$-transform \eqref{eq:Q+} studied in 
Section \ref{sec:PDEs}.)
Take $f\in\HH^1$ and assume that $w\in\OO\cap\mathcal{F}_N$ so that the zeros of 
the polynomial $w(z)=w_1 z+...+w_N z^N+z^{N+1}$ are contained in the unit disk $\D$.
As in the proof of Lemma \ref{lem:J(w,p)-decomposition}, we then see that
for $z\in\mathbb{S}$,
\begin{equation}\label{eq:f/w^2}
\frac{f(z)}{w(z)^2}=\frac{f(z)}{z^{2N+2}}\,\prod_{j=1}^N\Big(\frac{1}{1-z_j/z}\Big)^2
=\frac{f(z)}{z^{2N+2}}\,g(z),
\end{equation}
where $g\in \HH^{1,-}$ and
\[
g(z)=1+2\,\big(\sum_{j=1}^N z_j\big)\,\frac{1}{z}+O\Big(\frac{1}{z^2}\Big),\quad z\to\infty.
\]
In view of \eqref{eq:w-polynomial} we have that $\sum_{j=1}^N z_j=-w_N$.

\begin{Definition}\label{def:deg}
For $f\in\HH^1$ and an integer $d\in\mathbb{Z}$ we write that $\deg f= d$ if 
$f=\sum_{k\le d} f_k e_k$ and $f_d\ne 0$. We set $\deg f=\infty$ if 
for any integer $\ell\ge 1$ there exists an index $n_\ell\ge\ell$ such that
$f_{n_\ell}\ne 0$.
\end{Definition}

We have the following Lemma. 

\begin{Lemma}\label{lem:Q_on_F_N}
Take $f\in\HH^1$ and assume that $w\in\mathcal{F}_N$ so that the zeros of 
the polynomial $w(z)=w_1 z+...+w_N z^N+z^{N+1}$ are contained in the unit disk $\D$.
Then we have:
\begin{itemize}
\item [(i)] If $\deg f\le 2N+1$ then $Q_f(w)=0$. More generally, 
$Q_f(w)$ does not depend on $f_k$, $k\le 2N+1$.
\item[(ii)] If $\deg f=2N+3$ then
\[
Q_f(w)=(f_{2N+2}-2f_{2N+3}w_N)+f_{2N+3}\,z\,.
\]
More generally, for $2N+2\le\deg f\not=\infty$ then $Q_f(w)$ is a polynomial of degree
$\deg f-2N-2$ with coefficients depending on $w_1,...,w_N$ and $f_k$, $2N+2\le k\le\deg f$.
\end{itemize}
\end{Lemma}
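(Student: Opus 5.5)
The plan is to reduce everything to the factorization \eqref{eq:f/w^2}. Since the zeros $z_1,\dots,z_N$ of $w(z)=z(z-z_1)\cdots(z-z_N)$ lie in $\D$, on the unit circle we can write $f/w^2=f\,e_{-(2N+2)}\,g$. Here $g=\prod_{j=1}^N(1-z_j/z)^{-2}\in\HH^{1,-}$ has the expansion $g(z)=\sum_{m\ge0}g_m z^{-m}$ with $g_0=1$ and $g_1=2\sum_j z_j=-2w_N$. The coefficients $g_m$ are symmetric polynomials in $z_1,\dots,z_N$, and hence polynomials in $w_1,\dots,w_N$ by Vieta. This expansion converges absolutely and uniformly on $\mathbb{S}$, because $\max_j|z_j|<1$. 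We then compute the Fourier coefficients of the product directly:
\[
\big(f/w^2\big)_n=\sum_{m\ge0} g_m\, f_{n+2N+2+m},\qquad n\in\Z,
\]
which is justified since $f\in\HH^1$ has summable coefficients and $g$ has geometrically decaying ones. Applying $\Pi^+$ keeps the coefficients with $n\ge0$, so $Q_f(w)=\sum_{n\ge0}\big(\sum_{m\ge0}g_m f_{n+2N+2+m}\big)e_n$.

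Item (i) follows by inspecting indices. For $n\ge0$ and $m\ge0$ we have $n+2N+2+m\ge 2N+2$, so only the coefficients $f_k$ with $k\ge2N+2$ enter. Hence $Q_f(w)$ does not depend on $f_k$ for $k\le 2N+1$, and it vanishes when $\deg f\le 2N+1$. Equivalently, by linearity in $f$, if $f=\sum_{k\le 2N+1}f_ke_k$ then $f/w^2=e_{-1}\cdot(\text{element of }\HH^{1,-})$ has no nonnegative modes.

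For item (ii), let $d=\deg f<\infty$ with $d\ge 2N+2$. Then $f_{n+2N+2+m}=0$ once $n+2N+2+m>d$, so the coefficient of $e_n$ vanishes for $n>d-2N-2$. For $n=d-2N-2$ the coefficient is $g_0f_d=f_d\ne0$. Therefore $Q_f(w)$ is a polynomial in $z$ of degree exactly $d-2N-2$, and its coefficients $\sum_{m=0}^{d-2N-2-n}g_m f_{n+2N+2+m}$ are finite sums that depend only on $w_1,\dots,w_N$ and on $f_k$ with $2N+2\le k\le d$. In the case $d=2N+3$ this gives the two coefficients $g_0f_{2N+3}=f_{2N+3}$ (for $n=1$) and $g_0f_{2N+2}+g_1f_{2N+3}=f_{2N+2}-2w_Nf_{2N+3}$ (for $n=0$), which is exactly the stated formula.

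The main point needing care is the legitimacy of the coefficient convolution, that is, interchanging the Fourier coefficient with the infinite product expansion of $g$. The step I would check first is that $g\in\HH^{1,-}$ has $|g_m|\le C m^{2N}\rho^m$ with $\rho=\max|z_j|<1$, so that the Cauchy product with the $\ell^1$ sequence $(f_k)$ converges absolutely. Everything else is index bookkeeping together with the identification $g_1=-2w_N$ already noted after \eqref{eq:f/w^2}.
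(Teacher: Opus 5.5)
Your proposal is correct and takes the same route as the paper. The paper omits the proof, saying only that it follows directly from the factorization \eqref{eq:f/w^2}. Your coefficient formula $(f/w^2)_n=\sum_{m\ge0}g_m f_{n+2N+2+m}$, with $g_0=1$ and $g_1=-2w_N$, is exactly the bookkeeping the paper leaves implicit, and your justification of the absolute convergence via geometric decay of $g_m$ is sound.
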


\noindent The proof of this Lemma follows directly from \eqref{eq:f/w^2} and thus will be omitted.
By combining Lemma \ref{lem:Q_on_F_N} with Proposition \ref{prop:finite_dynamics} we obtain 
the following Corollary.

\begin{Corollary}\label{coro:finite_image}
Take $f\in\HH^1$, $N\ge 2$, and let $(w,p)\in C^\ell\big([0,T),\OO\times\HH^{1,+}_0\big)$, 
$\ell\ge 2$, $T>0$, be the solution \eqref{eq:(w,p)-solution} of the Hamiltonian equation with 
Hamiltonian $H_f$ and initial data $(w_0,p_0)\in\mathcal{HF}_N$. Then, if $2N+2\le\deg f\not=\infty$ we 
have that $q_f(t)=Q_f(w(t))$ is a polynomial of degree $\le \deg f-2N-2$ for any $t\in[0,T)$. 
The same holds with $H_f$ replaced by $J_k^f$, $1\le k\le N$.
\end{Corollary}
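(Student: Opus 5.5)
The plan is to combine Proposition \ref{prop:finite_dynamics} with Lemma \ref{lem:Q_on_F_N} pointwise in time. One point needs care. Lemma \ref{lem:Q_on_F_N} assumes that the zeros of $w(z)=w_1z+\dots+w_Nz^N+z^{N+1}$ lie in $\D$, and the Corollary as stated does not impose this. I will therefore read the Corollary as carrying the same hypothesis at each time $t$, and then show that this hypothesis propagates from $t=0$ along the flow.

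First, by Proposition \ref{prop:finite_dynamics}, the $w$-component of the solution satisfies $w\in C^\ell([0,T),\mathcal{F}_N)$. So for every $t$ we can write $w(t)=w_1(t)e_1+\dots+w_N(t)e_N+e_{N+1}$, and the same holds for the flows of $J^f_k$, $1\le k\le N$. Moreover $w(t)\in\OO$, so $w(t)$ has no zeros on the circle $\mathbb{S}$.

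Next comes the propagation step. The number of zeros of $w(t)$ inside $\D$, counted with multiplicity, is $\frac{1}{2\pi i}\oint_{|z|=1}w'(t,z)/w(t,z)\,dz$. This integral depends continuously on $t$, because $w(t)$ never vanishes on $\mathbb{S}$ and the coefficients $w_k(t)$ are continuous in $t$. Being integer-valued on the connected interval $[0,T)$, it is constant. At $t=0$ all $N+1$ zeros (including $z=0$) lie in $\D$ by assumption, so the same is true for every $t$. This is the step that justifies using Lemma \ref{lem:Q_on_F_N} at all times. It is also the main subtle point: without it the expansion \eqref{eq:f/w^2} of $1/w^2$ in negative powers of $z$ fails.

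Finally, apply Lemma \ref{lem:Q_on_F_N} (ii) at each fixed $t$ with $w=w(t)$. Since $2N+2\le\deg f<\infty$, the lemma gives that $Q_f(w(t))$ is a polynomial of degree at most $\deg f-2N-2$, with coefficients depending on $w_1(t),\dots,w_N(t)$ and on $f_k$, $2N+2\le k\le\deg f$. The degree may drop, for instance when $\deg f-2N-2$ would come from a vanishing leading combination, which is why the bound is stated with $\le$. The argument is identical for $H_f$ replaced by $J^f_k$, $1\le k\le N$, because Proposition \ref{prop:finite_dynamics} covers those flows as well.
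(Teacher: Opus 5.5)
Your proposal is correct and follows the paper's route, which only combines Proposition~\ref{prop:finite_dynamics} (giving $w(t)\in\mathcal{F}_N$) with Lemma~\ref{lem:Q_on_F_N} at each fixed $t$. The paper silently omits the hypothesis that the zeros of $w$ lie in $\D$, and you are right that it is genuinely needed: a zero $a$ with $|a|>1$ generally contributes the non-polynomial principal part of $f/w^2$ at $a$ to $\Pi^+(f/w^2)$. Your argument-principle step, which uses $w(t)\in\OO$ to propagate this hypothesis from $t=0$, is exactly what makes the lemma applicable for all $t\in[0,T)$.

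One small correction: by \eqref{eq:f/w^2} and $g=1+O(1/z)$, the top coefficient of $Q_f(w(t))$ is $f_{\deg f}\neq 0$. So the degree is exactly $\deg f-2N-2$, no cancellation can lower it, and the ``$\le$'' in the statement is simply a weaker (still true) bound.
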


\medskip

\noindent{\em Relation to geodesic equivalence.}
Here we will restrict ourselves to the case when $(w_1,...,w_N)$ and
$(p_1,...,p_N)$ are {\em real} and $f=0$.
In view of \eqref{eq:canonical_poisson_N} we can then identify the Poisson manifold 
$(\mathcal{PF}_N,\{\cdot,\cdot\}_N)$ with
the co-tangent bundle $T^*\R^N$ where $\{(w_1,...,w_N)\}$ are the coordinates on $\R^N$,
$\{(p_1,...,p_N)\}$ are the corresponding impulses, and $\omega_N:=\sum_{1\le j\le N}dp_j\wedge dw_j$ is
the canonical symplectic structure on $T^*\R^N$. The restriction of 
the Hamiltonian \eqref{eq:Hamiltonian} to $\mathcal{PF}_N$ takes the form
\begin{equation}\label{eq:H_N(w,p)}
H_N(w,p)=\sum_{1\le j,k\le N} h^{jk}(w)p_jp_k
\end{equation}
where the $N\times N$-matrix $\Gamma_N(w)=(h^{jk}(w))_{1\le j,k\le N}$ equals the Hankel matrix
\begin{equation}\label{eq:H_N(w)}
\Gamma_N(w)= \left(\begin{array}{ccccc}
     w_2 & w_3 & \dots & w_N & 1  \\
     w_3 &  & \iddots & 1 & 0  \\
     \vdots & \iddots & \iddots & \iddots &  \vdots\\
     w_N & 1 & \iddots &  & 0 \\ 
    1 & 0 & \dots & 0 & 0 \\ 
\end{array}\right).
\end{equation}
Note that $\Gamma_N(w)$ is a non-degenerate matrix with determinant equal to plus or minus one.
By applying the Legendre transform corresponding to the Hamiltonian $\frac{1}{2}\,H_N(w,p)$,
\begin{equation}\label{eq:LF_H_N}
FL_{H_N} : T^*\R^N\to T\R^N,\quad 
(w,p)\mapsto(w,v),\quad v=\frac{1}{2}\frac{\partial H_N}{\partial p}(w,p)=
\Gamma_N(w)\,p,
\end{equation}
we obtain from \eqref{eq:H_N(w,p)} a (pseudo) Riemannian metric on $\R^N$,
\begin{equation}\label{eq:g}
dg_N^2:=\sum_{1\le j,k\le N}g_{ik}(w)\,dw_j dw_k,
\end{equation}
whose metric tensor is given by the inverse of \eqref{eq:H_N(w)}.
Consider in addition the symmetric with respect to \eqref{eq:g} tensor field $M_N(w)$ of type $(1,1)$ 
on $\R^N$ that, in coordinates, is given by the matrix
\begin{equation}\label{eq:M-matrix}
M_N(w):= \left(\begin{array}{ccccc}
     0 & 0 & \dots & 0& w_1  \\
     -1 & 0 & \dots & 0 & w_2 \\
     0 & -1 & \dots & 0 & w_3 \\
     \vdots & \vdots & \ddots & \vdots &  \vdots\\
     0 & 0 & \dots & -1 & w_N \\ 
\end{array}\right)\,.
\end{equation}
One easily sees that the determinant of \eqref{eq:M-matrix} is equal to $w_1$.
Let us now assume that $w_1\ne 0$ and define a second metric on $\R^N$ by the formula
\begin{equation}\label{eq:g-bar}
\bar{g}_N(v,v):=\frac{1}{\det M_N(w)}\,g_N\big(M_N(w)^{-1}v,v\big),\quad v\in T\R^N\,.
\end{equation}
Recall that two Riemannian (or pseudo Riemannian) metrics $g$ and $\bar{g}$ on 
a smooth manifold $X$ of dimension $n\ge 2$ are called {\em geodesically equivalent} (or, equivalently, 
{\em projectively equivalent}) if they have the same unparametrized geodesics, i.e.,
if $\gamma\in C^\infty\big((a,b),X\big)$ is a geodesic of $g$ then 
$\gamma\circ\phi\in C^\infty\big((a_1,b_1),X\big)$, where $\phi : (a_1,b_1)\to(a,b)$ is 
a $C^\infty$-diffeomorphism, is a geodesic of $\bar{g}$, and vise versa. 
The following Lemma is well known and can be proved by a direct computation.

\begin{Lemma}
The metric \eqref{eq:g} and \eqref{eq:g-bar} are geodesically equavalent.
\end{Lemma}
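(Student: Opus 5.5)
The natural route is the classical Levi-Civita/Sinjukov criterion for geodesic equivalence, rather than comparing geodesics directly. Two metrics $g$ and $\bar g$ on an $N$-dimensional manifold, $N\ge 2$, are geodesically equivalent if and only if the $(1,1)$-tensor
\[
L:=\Big|\frac{\det\bar g}{\det g}\Big|^{1/(N+1)}\bar g^{-1}g
\]
is $g$-self-adjoint and satisfies
\[
\nabla_u L=\tfrac12\big(u\otimes d\,\mathrm{tr}L+(u\otimes d\,\mathrm{tr}L)^{*}\big)
\]
for all vectors $u$, where ${}^*$ denotes the $g$-adjoint. Equivalently, for every $g$-geodesic $\gamma$ the function $g(L\dot\gamma,\dot\gamma)$ twisted by $\det L$ yields the quadratic integral $\det(L)\,g(L^{-1}\dot\gamma,\dot\gamma)$. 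I will show that $L=M_N(w)$.

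\emph{Step 1: identify $L$.} By \eqref{eq:g-bar}, $\bar g=(\det M_N)^{-1}g M_N^{-1}$. Hence $\det\bar g/\det g=(\det M_N)^{-N}(\det M_N)^{-1}=(\det M_N)^{-(N+1)}$. Also $\bar g^{-1}g=(\det M_N)M_N$. Therefore $L=(\det M_N)^{-1}\cdot(\det M_N)M_N=M_N(w)$, up to a sign if $\det M_N=w_1<0$; a constant sign does not affect the equation. The $g$-self-adjointness of $M_N$ was asserted in the paper. To confirm it, I will check that $M_N\Gamma_N(w)$ is symmetric, which means $M_N$ is self-adjoint for the cometric $\Gamma_N$, and hence for $g=\Gamma_N^{-1}$. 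This is a direct computation: $M_N$ is the companion-type matrix and $\Gamma_N$ is the Hankel matrix, and their product is again Hankel.

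\emph{Step 2: verify the Sinjukov equation.} Here $\mathrm{tr}\,M_N=w_N$, so $d\,\mathrm{tr}L=dw_N$. Rather than computing Christoffel symbols, I will use the equivalent Hamiltonian formulation. The equation holds if and only if, for every $\mu$, the function
\[
\det(L-\mu)\,g^{-1}\big((L-\mu)^{-1}p,p\big)
\]
Poisson-commutes with $H_N=\langle\Gamma_N(w)p,p\rangle$ on $T^*\R^N$. In the Topalov–Matveev formulation, it suffices that these functions commute with $H_N$ for all $\mu$.

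The needed integrals are already available. By Lemma \ref{lem:finite_dynamics}(iii) with $f=0$, the restrictions $\widetilde I_k$, $1\le k\le N$, Poisson-commute with $\widetilde H_0=H_N$. So the plan is to show that the generating function $\sum_k\widetilde I_k\mu^{k}$, suitably reindexed and normalized, equals $\det(M_N-\mu)\,\langle\Gamma_N(M_N-\mu)^{-1}p,p\rangle$, up to an affine change in $\mu$ and adding multiples of $H_N$.

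To obtain this identity, I will compute from \eqref{eq:I(mu)-formula} for polynomial $w\in\mathcal F_N$. The characteristic polynomial of $M_N$ is $\pm\mu^{-1}w(\mu)$, up to reversal of coefficients, because $M_N$ is a companion matrix of $z^N-w_Nz^{N-1}-\dots$ up to signs. This matches the factor $w(\mu)/\mu$ appearing in $\gamma_\mu$. The quadratic form $A(\mu,w)\Gamma(w)$ restricted to $\mathrm{span}(e_1,\dots,e_N)$ should then coincide with the adjugate $\mathrm{adj}(M_N-\mu)\Gamma_N$, after using the variable $1/\mu$ if necessary.

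\emph{Main obstacle.} The hard part is this matrix identity $A(\mu,w)\Gamma(w)\big|_{N}=c(\mu)\,\mathrm{adj}(\lambda(\mu)-M_N)\Gamma_N$. My plan is to verify it on the basis vectors $e_j$ using the shift formula \eqref{eq:A_k-shifts}. Since $M_N$ acts as multiplication by $z$ modulo $w(z)/z$ in the basis $e_j$, the resolvent is a divided difference $\frac{w(z)\cdot-\,w(\mu)\cdot}{z-\mu}$, which is exactly the structure of \eqref{eq:A-operator}. If the Hamiltonian route becomes messy, the fallback is a direct check of the linear PDE system for $L$ in the flat coordinates $w$. This should be feasible because $g^{-1}=\Gamma_N(w)$ and $M_N(w)$ are both affine in $w$, so all Christoffel-type terms reduce to constant-coefficient matrix identities.
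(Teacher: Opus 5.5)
You take a genuinely different route from the paper, which only asserts that the lemma follows by direct computation. The first half of your argument is sound. Step 1 is correct, with one slip: $M_N\Gamma_N(w)$ is symmetric but not Hankel. For $N=3$ it equals $\mathrm{diag}(w_1)\oplus\bigl(\begin{smallmatrix}-w_3&-1\\-1&0\end{smallmatrix}\bigr)$, but symmetry is all you need. Matching the generating function of the $\widetilde I_k$ with $\mu\det(\mu+M_N)\langle(\mu+M_N)^{-1}\Gamma_N p,p\rangle$ is exactly the paper's identity \eqref{eq:adjoint_operators_equality}. Their commutation with $H_N$ then follows from Theorem \ref{th:involutivity} and Lemma \ref{lem:finite_dynamics}.

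\textbf{The gap is the final implication.} The Matveev--Topalov theorem quoted in the paper goes only one way: geodesic equivalence $\Rightarrow$ the $I_g(\mu)$ are integrals. The converse you invoke, ``the Sinjukov equation holds iff every $I_\mu$ Poisson-commutes with $H_N$'', is false in dimension $\ge 3$. For a counterexample, take $SU(2)\cong S^3$ with its bi-invariant metric and a left-invariant orthonormal frame $X_1,X_2,X_3$. Set $L=\sum_i\ell_i\,X_i\otimes X_i^\flat$ with distinct nonzero constants $\ell_i$.
\begin{itemize}
\item The $X_i$ are Killing, so $I_\mu=\sum_i\prod_{m\ne i}(\ell_m-\mu)\,\langle X_i,\xi\rangle^2$ is an integral for every $\mu$.
\item But $\mathrm{tr}\,L$ is constant, so the Sinjukov equation would force $\nabla L=0$.
\item In fact $(\nabla_{X_k}L)X_j=\tfrac12(\ell_j-L)[X_k,X_j]\neq0$. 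Equivalently, $\bar g$ is a non-round left-invariant metric, so by Beltrami it is not projectively equivalent to the round one.
\end{itemize}
The pointwise reason is as follows. At a point of simple spectrum, the Killing conditions for all $\mu$ determine $\nabla L$ only up to a ``twist''. In the eigenframe, the $e_i$-component of $(\nabla_{e_k}L)e_j$ is $(\ell_i-\ell_j)c_{ijk}$ for distinct $i,j,k$, with $c$ totally antisymmetric. For $N=2$ no such twist exists and your criterion is valid. Your ``equivalently'' with $I_0$ alone also fails for $N\ge3$.

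\textbf{How to close it.} For $N\ge 3$ you need an argument specific to $M_N$ that kills $c_{ijk}$. Any one of the following works.
\begin{itemize}
\item The Killing conditions also give $d\ell_i=e_i(\ell_i)\,e^i$. Hence $0=d(d\ell_i)(e_j,e_k)=-2\,e_i(\ell_i)\,c_{ijk}$, up to signs in indefinite signature. The eigenvalues of $M_N$ are, up to sign, the roots of $w(\mu)/\mu=w_1+\dots+w_N\mu^{N-1}+\mu^N$, and the $w_k$ are coordinates. So $d\ell_i\neq0$ on the dense set of simple spectrum (complexify if needed), giving $c=0$ there. The Sinjukov equation is polynomial in $w$, so it then holds everywhere.
\item Use that $M_N$ has vanishing Nijenhuis torsion: it is a companion-form operator whose characteristic-polynomial coefficients are the coordinates. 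At simple-spectrum points, vanishing torsion together with $\mathrm{tr}(L)g-gL$ being Killing also excludes the twist.
\item Carry out your fallback: verify the Sinjukov equation directly for $g=\Gamma_N(w)^{-1}$ and $L=M_N(w)$. This is what the paper means by ``direct computation''.
\end{itemize}
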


\noindent The geodesic flows of geodesically equivalent metrics have integrals in involution.
More specifically, we have the following Theorem.
For a given metric $g$ on $X$ consider the Legendre transform $FL_g : TX\to T^*X$, 
$v\mapsto g(v,\cdot)$, and denote by $\omega$ the canonical symplectic form on $T^*X$.

\begin{Theorem}[\cite{MT1},\cite{T1}]
Assume that the Riemannian (or pseudo Riemannian) metrics $g$ and $\bar{g}$ are geodesically 
equivalent on a smooth manifold $X$ of dimension $n\ge 2$ and consider the endomorphism
$M$ of the tangent bundle $TX$ that, in coordinates, is given by\footnote{For repeated indexes we 
follow the Einstein summation convention.}
\begin{equation}\label{eq:M-operator_finite}
M^j_k:=\left|\frac{\det\bar{g}}{\det g}\right|^{\frac{1}{n+1}}\bar{g}^{jl}g_{lk},
\end{equation}
and let $\text{\rm Id}$ be the identity endomorphism of $TX$.
Then, for any given value of the parameter $\mu\in\R$ the function on the tangent bundle 
\begin{equation}\label{eq:I(mu)-finite}
I_g(\mu)(v):=\det(\mu\,\text{\rm Id}+M)\,g\big(\big(\mu\,\text{\rm Id}+M\big)^{-1}v,v\big),\quad v\in TX,
\end{equation}
is an integral of the geodesic flow of 
the metric $g$. Moreover, for any $v\in TX$ we have the expansion in the parameter $\mu\in\R$,
\begin{equation}\label{eq:I_k-finite}
I_g(\mu)(v)=I_{g,1}(v)+I_{g,2}(v)\,\mu+...+I_{g,n}(v)\,\mu^{n-1},
\end{equation}
where the functions $I_{g,k} : TX\to\R$, $1\le k\le n$, are in involution with respect to
to the symplectic structure $\omega_g:=FL_g^*\omega$ on $TX$.
\end{Theorem}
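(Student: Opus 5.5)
We prove the Theorem by Matveev–Topalov's standard route: show that $I_g(\mu)$ is a conserved quantity for every $\mu$ and that $\{I_g(\mu),I_g(\nu)\}=0$ for all $\mu,\nu$. Since $I_g(\mu)$ is polynomial in $\mu$, comparing coefficients then yields the conservation and involutivity of the $I_{g,k}$.

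\emph{Step 1: the geodesic equivalence as a PDE on $M$.} By the Levi-Civita/Sinjukov characterization, $g$ and $\bar g$ are geodesically equivalent if and only if the $g$-symmetric tensor $L:=M$ from \eqref{eq:M-operator_finite} satisfies
\[
\nabla_k L_{ij}=\tfrac12\big(g_{ik}\lambda_{,j}+g_{jk}\lambda_{,i}\big),\qquad \lambda=\operatorname{tr}L,
\]
where $\nabla$ is the Levi-Civita connection of $g$. The normalization factor $|\det\bar g/\det g|^{1/(n+1)}$ in \eqref{eq:M-operator_finite} is exactly what makes this equation linear. We derive it by comparing Christoffel symbols: $\bar\Gamma^i_{jk}-\Gamma^i_{jk}=\delta^i_j\phi_{,k}+\delta^i_k\phi_{,j}$ with $\phi=\frac{1}{2(n+1)}\log|\det\bar g/\det g|$, and then translating this into the equation for $L$. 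We can also quote it from \cite{MT1}.

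\emph{Step 2: conservation of $I_g(\mu)$.} For $\mu$ outside the spectrum of $-M$, set $K(\mu):=\det(\mu\,\mathrm{Id}+M)(\mu\,\mathrm{Id}+M)^{-1}$, which is the adjugate matrix and hence polynomial in $\mu$. We show that $K(\mu)_{ij}$ is a Killing tensor, i.e.\ the symmetrization of $\nabla_k K_{ij}$ vanishes. By the Cayley–Hamilton trick it suffices to show that $\nabla$ applied to $\det(\mu+L)(\mu+L)^{-1}$ is symmetric of Killing type. This reduces to the following computation using Step 1:
\begin{itemize}
\item[(a)] $\nabla_k(\mu+L)^{-1}=-(\mu+L)^{-1}(\nabla_kL)(\mu+L)^{-1}$;
\item[(b)] $\nabla_k\det(\mu+L)=\det(\mu+L)\,\operatorname{tr}\big((\mu+L)^{-1}\nabla_kL\big)=\det(\mu+L)\,\big((\mu+L)^{-1}\big)^{j}{}_{k}\lambda_{,j}$.
\end{itemize}
Contracting with $v^iv^jv^k$, the two terms cancel exactly. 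Hence $\frac{d}{dt}I_g(\mu)(\dot\gamma)=0$ along geodesics. By polynomiality in $\mu$ and density, this extends to all $\mu$.

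\emph{Step 3: involutivity.} This is the main obstacle. We want $\{I_g(\mu),I_g(\nu)\}=0$, i.e.\ that the Schouten bracket $[K(\mu),K(\nu)]$ vanishes. We first try the algebraic route. From Step 2 we know $[g^{-1},K(\mu)]=0$ for every $\mu$. We then use the identity
\[
(\nu-\mu)\,K(\mu)K(\nu)\sim \det(\mu+L)\,K(\nu)-\det(\nu+L)\,K(\mu),
\]
which follows from the resolvent identity. We combine this with the fact that the first integrals, viewed as functions on $T^*X$, are all built from $g^{-1}$ and the commuting family $(\mu+L)^{-1}$. At a point where $L$ has simple eigenvalues, we pass to coordinates in which $L$ and $g$ are simultaneously diagonal; Step 1 forces these to be Levi-Civita/Liouville coordinates. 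In such coordinates, $I_g(\mu)=\sum_i\prod_{j\neq i}(\mu+\rho_j)\,\frac{p_i^2}{\prod_{j\neq i}(\rho_i-\rho_j)}$, up to the factors $\rho_i=\rho_i(x_i)$, and the vanishing of the brackets becomes the classical Stäckel computation.

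The genericity assumption (simple eigenvalues) is removed by analyticity/continuity in the open dense set where it holds. If eigenvalues are constant on an open set, we use the splitting argument of \cite{T1} instead. Finally, we transport the result to $TX$ via $FL_g$, which is a symplectomorphism onto $(T^*X,\omega)$ by the definition $\omega_g=FL_g^*\omega$, and compare the coefficients of $\mu^k\nu^l$.
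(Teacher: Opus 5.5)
Steps 1 and 2 are fine. Your formulas (a) and (b), combined with Sinjukov's equation, give the Killing identity pointwise for every signature. (The paper itself does not prove this theorem; it only quotes it from \cite{MT1}, \cite{T1}.) Step 3, however, has a genuine gap.

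\textbf{The gap.} Levi-Civita coordinates, and hence the Stäckel form you write down, exist only near points where $M$ has $n$ pairwise distinct \emph{real} eigenvalues. That set is open but in general not dense.
\begin{itemize}
\item A multiple eigenvalue can persist on an open set. For $\bar g=c\,g$ one has $M=\mathrm{const}\cdot\mathrm{Id}$ everywhere, and there are non-trivial examples with blocks of constant multiple eigenvalues.
\item In the pseudo-Riemannian case, which the statement explicitly includes, $M$ can have complex eigenvalues or non-trivial Jordan blocks on open sets. There no real coordinates diagonalize $M$ and $g$ simultaneously.
\end{itemize}
So "continuity from the open dense set" does not reach these regions, and they are exactly what you hand off to "the splitting argument of \cite{T1}". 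Note that constancy of an eigenvalue is not the obstruction, since a simple constant eigenvalue is harmless in Levi-Civita's form; multiplicity and non-diagonalizability are. Your "algebraic route" also does not work as stated. The correct identity is $(\nu-\mu)K(\mu)K(\nu)=\det(\nu\,\mathrm{Id}+M)K(\mu)-\det(\mu\,\mathrm{Id}+M)K(\nu)$ (your sign is reversed), and in any case it concerns compositions of endomorphisms. By itself it says nothing about the Schouten bracket.

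\textbf{The fix.} The missing idea is to use the resolvent identity \emph{inside} the Poisson bracket, just as your Step 2 does for $H$; this needs no normal form. Put $R_\mu=(\mu\,\mathrm{Id}+M)^{-1}$, $D_\mu=\det(\mu\,\mathrm{Id}+M)$ and $\xi=g^{-1}p$. Then $I_g(\mu)(p)=D_\mu\,g(R_\mu\xi,\xi)$ and $K(\mu)p=D_\mu R_\mu\xi$. Your (a), (b) and Sinjukov's equation give, for every vector $u$,
\[
\big(\nabla_uK(\mu)\big)(p,p)=D_\mu\big(d\lambda(R_\mu u)\,g(R_\mu\xi,\xi)-g(u,R_\mu\xi)\,d\lambda(R_\mu\xi)\big).
\]
The Christoffel terms cancel after antisymmetrization, so up to the sign convention
\[
\{I_g(\mu),I_g(\nu)\}=2\big[(\nabla_{K(\mu)p}K(\nu))(p,p)-(\nabla_{K(\nu)p}K(\mu))(p,p)\big].
\]
Using $R_\mu R_\nu=R_\nu R_\mu$ and the $g$-self-adjointness of $R_\mu$, this equals
\[
2D_\mu D_\nu\Big(d\lambda(R_\mu R_\nu\xi)\,g\big((R_\nu-R_\mu)\xi,\xi\big)-g(R_\mu\xi,R_\nu\xi)\,d\lambda\big((R_\nu-R_\mu)\xi\big)\Big).
\]
Since $R_\nu-R_\mu=-(\nu-\mu)R_\mu R_\nu$, the two terms become $\mp(\nu-\mu)\,d\lambda(R_\mu R_\nu\xi)\,g(R_\mu\xi,R_\nu\xi)$ and cancel. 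This holds wherever $D_\mu D_\nu\ne0$, hence identically in $(\mu,\nu)$ by polynomiality, for any signature and any spectrum. Comparing coefficients of $\mu^{k-1}\nu^{l-1}$ then gives $\{I_{g,k},I_{g,l}\}=0$. This is the same "bracket symmetric in $\mu,\nu$" mechanism the paper uses to prove Theorem~\ref{th:involutivity}.
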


\noindent The integral $I_{g,n}(v)$ in \eqref{eq:I_k-finite} equals $g(v,v)$.
When written on the co-tangent bundle, the one-parameter family of integrals 
\eqref{eq:I(mu)-finite} takes the form
\begin{equation}\label{eq:I(mu)-finite'}
I_g(\mu)(p)=\det(\mu\,\text{\rm Id}+M)\,\big\1\big(\mu\text{\rm Id}+M\big)^{-1} g^{-1} p,p\big\2_X,
\quad\mu\in\R,\quad p\in T^*X,
\end{equation}
where $\1\cdot,\cdot\2_X$ is the standard pairing between vectors and co-vectors and
$g^{-1} : T^*X\to TX$ denotes the inverse of the Legendre transform $FL_g : TX\to T^*X$.

Let us now return to the particular case of the metrics \eqref{eq:g} and \eqref{eq:g-bar}.
A direct computation shows that the tensor field \eqref{eq:M-operator_finite} is then given by
the matrix \eqref{eq:M-matrix}. This allows us to compute the integrals \eqref{eq:I_k-finite}.
To this end, consider the adjoint matrix of $\mu\,\text{\rm Id}+M_N(w)$,
\begin{equation}\label{eq:A_N(mu)}
A_N(\mu,w):=\det(\mu\,\text{\rm Id}+M_N(w))\,
\big(\mu\,\text{\rm Id}+M_N(w)\big)^{-1},\quad\mu\in\R,\quad w\in\R^N.
\end{equation}
We first note that
\[
\det(\mu\,\text{\rm Id}+M_N(w))=w_1+w_2\,\mu+...+w_N\,\mu^{N-1}+\mu^N\,.
\]
A direct computations shows that
\begin{equation}\label{eq:A_{N,k}}
A_N(\mu,w)=A_{N,1}(w)+A_{N,2}(w)\,\mu+...+A_{N,N}(w)\,\mu^{N-1}
\end{equation}
where
\begin{equation*}\label{eq:A_{N,1}-matrix}
A_{N,1}(w)= \left(\begin{array}{cccccc}
     w_2 & -w_1 & 0 & \dots & 0& 0  \\
     w_3 & 0 & -w_1 & \dots & 0 & 0 \\
     w_4 & 0 & 0 &\dots & 0 & 0 \\
     \vdots & \vdots & \ddots & \vdots &  \vdots\\
     w_N & 0 & 0 & \dots & 0 & -w_1 \\
     1 & 0 & 0 &\dots & 0 & 0 \\ 
\end{array}\right),
\end{equation*}
\begin{equation*}\label{eq:A_{N,2}-matrix}
A_{N,2}(w)= \left(\begin{array}{ccccccc}
     w_3 & 0 & -w_1 & 0 & \dots & 0& 0  \\
     w_4 & w_3 & -w_2 & -w_1& \dots & 0 & 0 \\
     w_5 & w_4 & 0 &-w_2 & \dots & 0 & 0 \\
     \vdots & \vdots & \vdots & \vdots &  \ddots & \ddots & \vdots\\
     w_N & w_{N-1} & 0 & 0 &\dots & -w_2 & -w_1 \\
     1 & w_N & 0 & 0 &\dots & 0 & -w_2 \\
     0 & 1 & 0 & 0 &\dots & 0 & 0 \\ 
\end{array}\right),
\end{equation*}
and $A_{N,N}(w)$ is the identity $N\times N$-matrix. 
In particular, we obtain from \eqref{eq:I(mu)-finite'}, \eqref{eq:A_N(mu)},
and \eqref{eq:A_{N,k}}, the following formula for the integrals of \eqref{eq:g},
\begin{equation}\label{eq:I_{N,k}}
I_{N,k}(w,p)=\big\1 A_{N,k}(p)\Gamma_N(w) p,p\big\2_{\R^N},\quad 1\le k\le N,
\end{equation}
where $w,p\in \R^N$ and $\1 p,v\2_{\R^N}:=\sum_{1\le j\le N} p_j v_j$ for $v,p\in\R^N$.
By comparing the formulas for $A_{N,k}(w)$, $1\le k\le N$, with \eqref{eq:A_k-shifts} we 
conclude that
\begin{equation}\label{eq:adjoint_operators_equality_k}
A_k(w)v=A_{N,k}(w)v,\quad 1\le k\le N,
\end{equation}
for any $w\in\mathcal{F}_N$ and $v=v_1 e_1+...+v_N e_N\in\HH^{1,+}_0$ where we
identify $\mathcal{F}_N$ with $\R^N=\{(w_1,...,w_N)\}$.
Equivalently, \eqref{eq:adjoint_operators_equality_k} can be written as
\begin{equation}\label{eq:adjoint_operators_equality}
A(\mu,w)v=\mu\,A_N(\mu,w)v
\end{equation}
for any $w\in\mathcal{F}_N$, $v=v_1 e_1+...+v_N e_N\in\HH^{1,+}_0$, and $\mu\in\R$.

\begin{Remark}
The equality \eqref{eq:adjoint_operators_equality} implies that the geodesic operator
\eqref{eq:A-expansion(H)} is an infinite dimensional analog of the adjoint matrix 
\eqref{eq:A_N(mu)}. Note that we do {\em not} have an analog of the metric \eqref{eq:g}
and the linear transformation \eqref{eq:M-matrix} on $\HH^{1,+}_0$. 
The reason is that in contrast to the Legendre transform \eqref{eq:LF_H_N} on $T^*\R^N$ 
the Legendre transform \eqref{eq:Legendre_transform} on $\HH^{1,+}_0\times\HH^{1,+}_0$ is 
not invertible (see Remark \ref{rem:Legendre_transform}).
\end{Remark}

\noindent In particular, we obtain the following geometric interpretation of 
the Hamiltonian flow with Hamiltonian $\widetilde{H}_f$ on $\mathcal{PF}_N$ 
(see Proposition \ref{prop:finite_dynamics}).

\begin{Corollary}\label{coro:geodesic_equivalence}
Assume that $f=0$.
The Hamiltonian flow of $\widetilde{H}_f$ on $\mathcal{PF}_N$ coincides with
the geodesic flow of the metric \eqref{eq:g} when written on the co-tangent bundle
$T^*\R^N$. Moreover, the integrals \eqref{eq:J_f-tilde} coincide with the
integrals \eqref{eq:I(mu)-finite'}.
\end{Corollary}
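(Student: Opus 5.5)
The plan is to reduce both claims to the finite-dimensional computations already recorded, using Proposition \ref{prop:finite_dynamics} for the dynamics and identity \eqref{eq:adjoint_operators_equality_k} for the integrals.

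\emph{Step 1: the Hamiltonian.} With $f=0$ we have $H_f=H$, since $V_0=0$ and $U_k^0=0$. Take $w\in\mathcal{F}_N$ and $p=\sum_{j\le N}p_je_j$. Then
\[
H(w,p)=\1 w,p^2\2=\sum_{j,k\le N}w_{j+k}\,p_jp_k,\qquad w_{N+1}=1,\quad w_l=0\ \text{for } l\ge N+2,
\]
and the coefficient matrix is exactly the Hankel matrix \eqref{eq:H_N(w)}. Hence $\widetilde H_0=H_N$ from \eqref{eq:H_N(w,p)}, which is the cometric $\sum h^{jk}p_jp_k$ inverse to $g_N$. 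The Hamiltonian flow of $\widetilde H_0$ with respect to the canonical bracket \eqref{eq:canonical_poisson_N} is therefore the cotangent form of the geodesic flow of $g_N$, up to the harmless factor-$2$ time rescaling coming from $\tfrac12 H_N$. By Proposition \ref{prop:finite_dynamics} this is also the reduced $H_f$-flow.

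\emph{Step 2: the integrals.} Here $\widetilde J_k^0=I_k|_{\mathcal{PF}_N}=\1 A_k(w)\Gamma(w)p,p\2$. For $p$ supported on $e_1,\dots,e_N$ and $w\in\mathcal{F}_N$, only the first $N$ components of $\Gamma(w)p=\Pi^+_0(w\check p)$ enter the pairing, since $p$ has no higher modes. If $A_k(w)$ preserves $\mathrm{Span}(e_1,\dots,e_N)$, these components equal $\Gamma_N(w)p$. I would check this preservation directly from \eqref{eq:A_k-shifts}: $S_-^kw$ has degree at most $N+1-k$, and applying $S_+^{j-1}$ with $j\le k$ lands in degree at most $N$. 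So
\[
\widetilde J_k^0=\big\1\Pi^+_{\le N}A_k(w)\Pi^+_{\le N}\,\Gamma_N(w)p,\,p\big\2 .
\]
By \eqref{eq:adjoint_operators_equality_k} this equals $\1 A_{N,k}(w)\Gamma_N(w)p,p\2_{\R^N}=I_{N,k}$ from \eqref{eq:I_{N,k}}. Note that the $A_{N,k}(p)$ there should read $A_{N,k}(w)$.

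\emph{Step 3: identification with the geodesic-equivalence integrals.} Formula \eqref{eq:I(mu)-finite'} is $\1 A_N(\mu,w)g^{-1}p,p\2$ with $g^{-1}=\Gamma_N(w)$. This uses the stated fact that the tensor \eqref{eq:M-operator_finite} built from $g_N,\bar g_N$ equals $M_N(w)$, so that \eqref{eq:A_N(mu)} is the relevant adjugate. Expanding in $\mu$ via \eqref{eq:A_{N,k}} shows that the coefficients are precisely $I_{N,k}$. Equivalently, by \eqref{eq:adjoint_operators_equality}, $\widetilde J^0(\mu)=\mu\,I_{g}(\mu)$.

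\emph{Main obstacle.} I expect the main difficulty to be the bookkeeping in Step 2, namely justifying that truncating $\Gamma(w)p$ to its first $N$ modes commutes with $A_k(w)$ inside the pairing. The potentially delicate terms are the components $(\Gamma(w)p)_m$ with $m\le N$ that feed $A_k(w)$. My first attempt is to observe that $\Gamma(w)p$ is already supported in modes $\le N$: a product $w_{l}\bar p$ contributes to mode $l-j$, and $l-j\le N$ because $l\le N+1$ and $j\ge1$. That makes the truncation trivial. With this in hand, the remaining verification is the direct check of \eqref{eq:adjoint_operators_equality_k}, which the paper states can be read off from \eqref{eq:A_k-shifts}.
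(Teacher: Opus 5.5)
Your proposal is correct and follows the same route as the paper. You identify $\widetilde H_0$ with $H_N$ via the Hankel matrix, then use $\Gamma(w)p=\Gamma_N(w)p$ on $\mathcal{PF}_N$ together with $A_k(w)v=A_{N,k}(w)v$ to conclude $\widetilde J^0_k=I_{N,k}$. Your observation that $\Gamma(w)p$ is already supported in modes $\le N$ is exactly the fact the paper invokes, and it makes the tentative truncation reasoning at the start of your Step 2 unnecessary; that reasoning would not stand on its own, since $A_k(w)$ lowers modes.
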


\begin{proof}[Proof of Corollary \ref{coro:geodesic_equivalence}]
The first statement of the Corollary follows from the discussion above
and the fact that $\widetilde{H}_f(w,p)=H_N(w,p)$ for 
$(w,p)\in\mathcal{PF}_N$. Let us prove the second statement.
It follows from \eqref{eq:I_k} and \eqref{eq:adjoint_operators_equality_k} that 
for any $(w,p)\in\mathcal{PF}_N$ we have that
\[
I_k(w,p)=\big\1 A_k(w)\Gamma(w) p,p\big\2=\big\1 A_{N,k}(w)\Gamma_N(w) p,p\big\2_{\R^N}
=I_{N,k}(w,p),\quad 1\le k\le N,
\]
where we also used \eqref{eq:I_{N,k}} and the fact that $\Gamma(w)p=\Gamma_N(w)p$
for $(w,p)\in\mathcal{PF}_N$. This completes the proof of the Corollary.
\end{proof}

\noindent For the case when $f\ne 0$ we refer, e.g., to \cite{T2}.
We will not include the details here.

\begin{Remark}
Note that the definitions and the statements proved in this Section continue 
to hold with $\HH^{1,+}_0$ replaced by $\W^{1,+}_{r,0}$.
\end{Remark}

\section{$Q$-transforms and a class of PDEs}\label{sec:PDEs}
In this Section we construct two nonlinear transforms (the $Q$-transforms) that map the flows 
generated by the Hamiltonian $H_f$ and $J^f(\mu)$ for given $f\in\HH^1$ and $\mu\in\D$
onto solutions of a class of evolution PDE's.
To this end, we will first derive an equation on the evolution of the $w$-component
of the Hamiltonian flow of $H_f$ for a fixed value of the momentum map \eqref{eq:J-momentum_map(H)}. 
Take $f\in\HH^1$ and recall from Theorem \ref{th:existence} that
for any given $(w_\bullet,p_\bullet)\in\OO\times\HH^{1,+}_0$ there exists 
an open neighborhood $\mathcal{V}$ of $(w_\bullet,p_\bullet)$ in $\OO\times\HH^{1,+}_0$ and $T>0$ 
such that for any initial data $(w_0,p_0)\in\mathcal{V}$ there exists a unique solution 
\begin{equation}\label{eq:(w,p)-solution_bis}
(w,p)\in C^\ell\big([0,T),\OO\times\HH^{1,+}_0\big),\quad\forall\ell\ge 2,
\end{equation}
of the Hamiltonian equation with Hamiltonian $H_f$ such that $(w,p)|_{t=0}=(w_0,p_0)$.
Recall from \eqref{eq:Q_f} that for $w\in\OO$ we set $Q_f(w)=\Pi^+(f/w^2)$.
We have the following Theorem.

\begin{Theorem}\label{th:w-evolution}
Take $f\in\HH^1$ and let $(w,p)\in C^\ell\big([0,T),\OO\times\HH^{1,+}_0\big)$, $T>0$, $\ell\ge 2$,
be the solution of the Hamiltonian equation with Hamiltonian $H_f$ with 
initial data $(w_0,p_0)\in\OO\times\HH^{1,+}_0$. Then we have that
\begin{equation}\label{eq:w-evolution}
w\ddot{w}-\frac{1}{2}\dot{w}^2=
-2 w^2\,Q_f(w)-2 S_+J^f(w_0,p_0),\quad Q_f(w)=\Pi^+(f/w^2),
\end{equation}
where $J^f$ is the momentum map \eqref{eq:J-momentum_map(H)}, 
$S_+ : \HH^{1,+}_0\to\HH^{1,+}_0$ is the right shift \eqref{eq:S_pm}, and 
$\Pi^+ : \HH^1\to\HH^{1,+}$ is the projection \eqref{eq:Pi(H)}.
\end{Theorem}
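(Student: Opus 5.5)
The plan is to differentiate the $w$-equation of the Hamiltonian flow of $H_f$ once more in time. I would then express everything through the conserved momentum map \eqref{eq:J(w,p)}. By \eqref{eq:X_H_f}, \eqref{eq:H-gradients} and \eqref{eq:U^f-gradient}, the solution satisfies
\[
\dot w=2\Pi^+_0(w\check p)=2\Gamma(w)p,\qquad \dot p=-p^2+\Pi^+_0\big(\check f/\check w^2\big).
\]
Since $\ell\ge 2$, and multiplication and projections are bounded on the Banach algebra $\HH^1$, we may differentiate $\dot w$ in $C^{\ell-1}$. Using $(\Pi^+_0 g)^\vee=\Pi^-_0\check g$, this gives
\[
\ddot w=2\Pi^+_0(\dot w\check p)-2\Pi^+_0(w\check p^2)+2\Pi^+_0\big(w\,\Pi^-_0(f/w^2)\big).
\]

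Next I simplify the three terms separately. For the first, write $\Pi^+_0(w\check p)=w\check p-\Pi^-(w\check p)$. The product $\Pi^-(w\check p)\,\check p$ has only nonpositive Fourier modes, so $\Pi^+_0(\dot w\check p)=2\Pi^+_0(w\check p^2)=2\Gamma(w)(p^2)$. Hence the first two terms combine to $2\Gamma(w)(p^2)$. For the potential term, write $\Pi^-_0(f/w^2)=f/w^2-\Pi^+(f/w^2)$. Because $w\in\HH^{1,+}_0$, the product $w\,\Pi^+(f/w^2)$ already lies in $\HH^{1,+}_0$. Therefore
\[
\Pi^+_0\big(w\,\Pi^-_0(f/w^2)\big)=\Pi^+_0(f/w)-w\,Q_f(w).
\]
Altogether,
\[
\ddot w=2\Gamma(w)(p^2)+2\Pi^+_0(f/w)-2w\,Q_f(w).
\]

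On the other side, by Theorem \ref{th:J(mu)-involutivity} (equivalently Theorem \ref{th:J_k-involutivity}), $J^f(w(t),p(t))=J^f(w_0,p_0)$ for all $t$. Set $u:=[\Gamma(w)p]^2-w\Gamma(w)(p^2)-w\Pi^+_0(f/w)$, so that $J^f=S_-u$ by \eqref{eq:J(w,p)}. Each summand of $u$ is a product of two elements of $\HH^{1,+}_0$, so $u_1=0$. Then \eqref{eq:S_pm-properties} gives $S_+J^f(w_0,p_0)=S_+S_-u=u$. Using $[\Gamma(w)p]^2=\dot w^2/4$, the right side of \eqref{eq:w-evolution} becomes
\[
-2w^2Q_f(w)-\tfrac12\dot w^2+2w\Gamma(w)(p^2)+2w\Pi^+_0(f/w).
\]
This is exactly $w\ddot w-\frac12\dot w^2$ by the formula for $\ddot w$ above.

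The only delicate step is the projection bookkeeping. This includes the check/projection commutation, the sign in $\nabla_wV_f$, and the two vanishing claims (the term $\Pi^+_0(\Pi^-(w\check p)\check p)=0$ and $u_1=0$). I would verify these first on Fourier modes.
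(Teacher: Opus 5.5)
Your proposal is correct and follows essentially the same route as the paper. You differentiate $\dot w=2\Pi^+_0(w\check p)$ and simplify with the same projection identities to reach $\ddot w=2\Gamma(w)(p^2)+2\Pi^+_0(f/w)-2wQ_f(w)$, then multiply by $w$ and identify the result through \eqref{eq:J(w,p)} and the conservation of $J^f$. Your explicit check that $u_1=0$, so that $S_+S_-u=u$, fills in a step the paper leaves implicit.
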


\begin{proof}[Proof of Theorem \ref{th:w-evolution}]
It follows from \eqref{eq:U^f-gradient} and \eqref{eq:X_H_f} that 
the Hamiltonian equation of $H_f$ is
\begin{equation}\label{eq:X_H-tilde}
\left\{
\begin{array}{l}
\dot{w}=2\Pi^+_0(w\check{p}),\\
\dot{p}=-p^2+\Pi^+_0(\check{f}/\check{w}^2).
\end{array}
\right.
\end{equation}
We then obtain from \eqref{eq:X_H-tilde} that
\begin{align*}
\ddot{w}&=2\Pi^+_0(\dot{w}\check{p})+2\Pi^+_0\big(w\,(\dot{p})^\vee\big)=
4\Pi^+_0\big(\check{p}\Pi^+_0(w\check{p})\big)+
2\Pi^+_0\Big(-w\check{p}^2+w\big(\Pi^+_0(\check{f}/\check{w}^2)\big)^\vee\Big)\\
&=4\Pi^+_0\big(\check{p}\big(w\check{p}-\Pi^-(w\check{p})\big)\big)+
2\Pi^+_0\Big(-w\check{p}^2+w\big(\Pi^-(f/w^2)-\1 f/w^2,1\2\big)\Big)\\
&=2\Pi^+_0(w\check{p}^2)+2\Pi^+_0\big(w\Pi^-(f/w^2)\big)-2\1 f/w^2,1\2\,w\\
&=2\Pi^+_0(w\check{p}^2)+2\Pi^+_0\big(w\big(f/w^2-\Pi^+_0(f/w^2)\big)\big)-2\1 f/w^2,1\2\,w\\
&=\Big(2\Pi^+_0(w\check{p}^2)+2\Pi^+_0(f/w)\Big)-2w\Pi^+(f/w^2)
\end{align*}
where we used that 
$\big(\Pi^+_0a\big)^\vee=\Pi^-\check{a}-\1 a,1\2$ for any $a\in L^2(\T)$.
This together with \eqref{eq:J(w,p)} implies that
\begin{align}\label{eq:w''}
w\ddot{w}&=2\Big(w\Pi^+_0(w\check{p}^2)+w\Pi^+_0(f/w)\Big)-2w^2\Pi^+(f/w^2)\nonumber\\
&=2 \big(\Pi^+_0(w\check{p})\big)^2-2 S_+J^f(w,p)-2w^2\Pi^+(f/w^2)\,.
\end{align}
Theorem \ref{th:w-evolution} then follows from the first equation in \eqref{eq:X_H-tilde}, 
\eqref{eq:w''}, and the fact that $J^f(w,p)$ is independent on $t\in[0,T)$.
\end{proof}

\medskip

For a given $f\in\HH^1$ consider the maps
\begin{equation}\label{eq:Q+}
\mathcal{Q}_f^+ : \OO\times\HH^{1,+}_0\to\HH^{1,+},\quad
(w,p)\mapsto Q_f^+(w):=Q_f(w)=\Pi^+(f/w^2),
\end{equation}
and 
\begin{equation}\label{eq:Q-}
\mathcal{Q}_f^- : \OO\times\HH^{1,+}_0\to\HH^{1,-}_0,\quad
(w,p)\mapsto Q_f^-(w):=-\Pi^-_0(f/w^2)\,.
\end{equation}
It follows from \eqref{eq:Q+} and \eqref{eq:Q-} that
\begin{equation}\label{eq:Q+<->Q-}
Q_f^+(w)=Q_f^-(w)+f/w^2,\quad w\in\OO\,.
\end{equation}
Each of the quantities $\mathcal{Q}_f^+(w,p)$ and $\mathcal{Q}_f^-(w,p)$ 
will be called a {\em $Q$-transform} of $(w,p)\in\OO\times\HH^{1,+}_0$.
The Lemma below follows directly from Lemma \ref{lem:1/w-analytic} and 
the Banach algebra property of $\HH^1$.

\begin{Lemma}
The maps $\mathcal{Q}_f^\pm$ are analytic.
\end{Lemma}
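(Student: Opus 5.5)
The plan is to write each $\mathcal{Q}_f^\pm$ as a composition of maps whose analyticity is already known. Both maps depend on $(w,p)$ only through $w$, so it suffices to prove that
\[
\OO\to\HH^{1,\pm},\quad w\mapsto \Pi^+(f/w^2),\quad w\mapsto -\Pi^-_0(f/w^2),
\]
are analytic. Composing with the analytic projection $\OO\times\HH^{1,+}_0\to\OO$, $(w,p)\mapsto w$, then gives the claim.

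The key steps are as follows.

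\begin{itemize}
\item[(1)] By Lemma \ref{lem:1/w-analytic}, the map $\OO\to\HH^1$, $w\mapsto 1/w$, is analytic. Here $\OO$ is open in $\HH^{1,+}_0$, and the inclusion $\HH^{1,+}_0\hookrightarrow\HH^1$ is bounded linear.

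\item[(2)] Since $\HH^1$ is a Banach algebra, multiplication $\HH^1\times\HH^1\to\HH^1$ is a continuous bilinear map and hence analytic. Therefore $w\mapsto (1/w)^2=1/w^2$ is analytic as a map $\OO\to\HH^1$.

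\item[(3)] Multiplication by the fixed element $f\in\HH^1$ is a bounded linear operator on $\HH^1$. Hence $w\mapsto f/w^2$, $\OO\to\HH^1$, is analytic.

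\item[(4)] The projections $\Pi^+:\HH^1\to\HH^{1,+}$ and $\Pi^-_0:\HH^1\to\HH^{1,-}_0$ in \eqref{eq:Pi(H)} are bounded linear. Composing them with the map from step (3) yields analyticity of $Q^+_f$ and $Q^-_f$.
\end{itemize}

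The only point needing any care is that compositions of analytic maps between Banach spaces are analytic, and that bounded multilinear maps are analytic. Both are standard. There is no real obstacle; at most one should note that $\OO$ is open, so that analyticity on $\OO$ is meaningful, which is stated in the paper. As a consistency check, \eqref{eq:Q+<->Q-} shows $Q^+_f-Q^-_f=f/w^2$, which is analytic by step (3). So analyticity of one $Q$-transform follows from that of the other.
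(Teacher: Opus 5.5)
Your proposal is correct and is essentially the paper's argument: the paper derives the lemma directly from the analyticity of $w\mapsto 1/w$ on $\OO$ and the Banach algebra property of $\HH^1$, with the boundedness of the projections $\Pi^+$ and $\Pi^-_0$ used implicitly. You have simply made each step of that composition explicit.
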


\noindent It follows from \eqref{eq:Q+<->Q-} that the transformations 
$\mathcal{Q}_f^\pm : \OO\times\HH^{1,+}_0\to\HH^{1,\pm}$ 
map the solution \eqref{eq:(w,p)-solution_bis} of the Hamiltonian equation with Hamiltonian $H_f$ 
onto two curves
\begin{equation}\label{eq:q^pm(t)}
q^\pm(t):=\mathcal{Q}_f^\pm\big(w(t),p(t)\big)=Q_f^\pm\big(w(t)\big),\quad
q^\pm\in C^\ell\big([0,T],\HH^{1,\pm}\big),
\end{equation}
that satisfy the relation $q^+(t)=q^-(t)+f/w(t)^2$.
This and Theorem \ref{th:w-evolution} imply that
the $w$-component of the solution \eqref{eq:(w,p)-solution_bis} 
satisfies the equation
\begin{equation}\label{eq:w-evolution_bis}
w(t)\,\ddot{w}(t)-\frac{1}{2}\dot{w}(t)^2=
-2 w(t)^2\,q^\pm(t)-2C_f^\pm(w_0,p_0),\quad t\in[0,T),
\end{equation}
where $C_f^+(w_0,p_0)=S_+J^f(w_0,p_0)$ and $C_f^-(w_0,p_0)=S_+J^f(w_0,p_0)+f$ are 
independent of $t$. 

It follows from Theorem \ref{th:J(mu)-involutivity} and Theorem \ref{th:existence} that 
for any given $f\in\HH^1$ and $\mu\in\D$ the Hamiltonian flows of $H_f$ and $J^f(\mu)$ are 
locally uniquely defined on $\OO\times\HH^{1,+}_0$ and commute.
More specifically, we have that for any given $(w_\bullet,p_\bullet)\in\OO\times\HH^{1,+}_0$ 
there exists an open neighborhood $\mathcal{V}$ of 
$(w_\bullet,p_\bullet)$ in $\OO\times\HH^{1,+}_0$ and $T>0$ 
such that for any initial data $(w_0,p_0)\in\mathcal{V}$ there exists a $C^\ell$-map,
$\forall\ell\ge 2$,
\begin{equation}\label{eq:(w,p)-solution(x,t)}
(w,p) : [0,T)\times[0,T)\to\OO\times\HH^{1,+}_0,\quad(x,t)\mapsto\big(w(x,t),p(x,t)\big),
\end{equation}
such that $(w,p)|_{(x,t)=(0,0)}=(w_0,p_0)$ and for any given $t\in[0,T)$ the curve 
\[
[0,T)\to\OO\times\HH^{1,+}_0,\quad x\mapsto\big(w(x,t),p(x,t)\big),
\] 
is the unique solution of the Hamiltonian equation with Hamiltonian $H_f$ and 
initial data $\big(w(0,t),p(0,t)\big)$, and for any given $x\in[0,T)$ the curve 
\[
[0,T)\to\OO\times\HH^{1,+}_0,\quad t\mapsto\big(w(x,t),p(x,t)\big),
\]
is the unique solution of the Hamiltonian equation with Hamiltonian $J^f(\mu)$
and initial data $\big(w(x,0),p(x,0)\big)$. 
We will call the map \eqref{eq:(w,p)-solution(x,t)} the {\em joint flow} of $H_f$ and $J^f(\mu)$ 
with initial data $(w_0,p_0)$.
For $x,t\in[0,T)$ consider the $Q$-transforms of the joint flow \eqref{eq:(w,p)-solution(x,t)}
\begin{equation}\label{eq:q^pm(x,t)}
q^\pm(x,t):=\mathcal{Q}_f^\pm\big(w(x,t),p(x,t)\big).
\end{equation}
By construction, the maps 
\begin{equation}\label{eq:q^pm(x,t)-map}
q^+ : [0,T)\times[0,T)\to\HH^{1,+}\quad\text{\rm and}\quad
q^- : [0,T)\times[0,T)\to\HH^{1,-}_0
\end{equation}
belong to $C^\ell$ for any $\ell\ge 2$.
It follows from \eqref{eq:w-evolution_bis} that the $w$-component of the joint flow 
\eqref{eq:(w,p)-solution(x,t)} 
satisfies the equation\footnote{Note that the variable $t$ in \eqref{eq:w-evolution_bis} is now 
denoted by $x$.}
\begin{equation}\label{eq:w-evolution(x)}
w w_{xx}-\frac{1}{2}w_x^2=-2 w^2\,q^\pm-2C_f^\pm
\end{equation}
where $w_x$ denotes the partial derivative with respect to $x$, and $C_f^\pm$ are
independent of $x$.
By differentiating \eqref{eq:w-evolution(x)} with respect to $x$, we obtain that
\begin{equation}\label{eq:w-evolution(x)'}
w_{xxx}+4 w_x q^\pm+2w\,q^\pm_x=0\,.
\end{equation}
(It is no surprise that this equation appears in the theory of Hill's equation -- 
see \cite[\S\,3.4]{MW}.)
In what follows, we will derive a partial differential equation for the $t$-evolution of 
\eqref{eq:q^pm(x,t)}. Recall from Section \ref{sec:potentials} that for any given $\mu\in\D$ 
the function
\[
\widetilde{\delta}_\mu : \D\to\C,\quad \widetilde{\delta}_\mu(z)=\frac{\mu z}{1-\mu z},
\]
belongs to $\HH^{1,+}_0$ and satisfies \eqref{eq:tilde_delta3} where $\delta_\mu$ is 
the Dirac delta on $\D$. In addition to $\widetilde{\delta}_\mu$ with $\mu\in\D$ we will also need 
the function $\mathbb{1}_\mu\in\HH^{1,-}$ where
\begin{equation}\label{eq:1_mu}
\mathbb{1}_\mu(z):=\widecheck{\widetilde{\delta}}_\mu(z)/\mu=1/(z-\mu),\quad z\in\C\setminus\{\mu\}\,.
\end{equation}
We have the following alternative formula for the geodesic operator \eqref{eq:A-expansion(H)} in
$\HH^{1,+}_0$ and its adjoint with respect to the pairing \eqref{eq:L^2-metric(H0)} 
(cf. \eqref{eq:A_k-adjoint}).

\begin{Lemma}\label{lem:A-alternative_formula}
For any given $w,g\in\HH^{1,+}_0$ and $\mu\in\D$ we have that
\[
A(\mu,w)g=
g(\mu)\Gamma(w)(\widetilde{\delta}_\mu/\mu)-w(\mu)\Gamma(g)(\widetilde{\delta}_\mu/\mu)
\quad\text{\rm and}\quad
A(\mu,w)g=
\big(g(\mu)\,w-w(\mu)\,g\big)\,\mathbb{1}_\mu\,.
\]
\end{Lemma}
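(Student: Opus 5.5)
The second formula is essentially the definition \eqref{eq:A-operator}, read on the unit circle. I will prove it first and then derive the first formula from it.

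\emph{Second formula.} For $z\in\mathbb{S}$ and $\mu\in\D$ we have $z\ne\mu$. The definition \eqref{eq:A-operator} gives
\[
[A(\mu,w)g](z)=\frac{g(\mu)w(z)-w(\mu)g(z)}{z-\mu}
=\big(g(\mu)\,w-w(\mu)\,g\big)(z)\,\mathbb{1}_\mu(z),
\]
with $\mathbb{1}_\mu$ as in \eqref{eq:1_mu}. Both sides are continuous on $\mathbb{S}$, and $\mathbb{1}_\mu$ is analytic near $\mathbb{S}$, so it lies in $\HH^1$. The product is therefore an element of $\HH^1$.

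It remains to check that this element is the operator $A(\mu,w)g$ of Proposition \ref{prop:A-operator(H)} (iii). The function $z\mapsto(g(\mu)w(z)-w(\mu)g(z))/(z-\mu)$ has a removable singularity at $z=\mu$, so it is holomorphic on $\D$ and continuous on its closure. Its boundary values therefore lie in $\HH^{1,+}$ and determine the element.

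There is one subtlety about the identification of $\HH^{1,+}$ with holomorphic functions via \eqref{eq:f<->f(z)}. I will handle it by proving the identity first for $w,g\in\W^{1,+}_{r,0}$, where everything is pointwise. I then pass to $\HH^{1,+}_0$ by density, using continuity of both sides in $(w,g)$. This continuity is Proposition \ref{prop:A-operator(H)} for the left side, and the Banach algebra property together with continuity of $\delta_\mu$ for the right side.

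\emph{First formula.} I will compute $\Gamma(w)(\widetilde{\delta}_\mu/\mu)=\Pi^+_0\big(w\,\widecheck{\widetilde{\delta}}_\mu/\mu\big)=\Pi^+_0\big(w\,\mathbb{1}_\mu\big)$, using \eqref{eq:1_mu}. On $\mathbb{S}$ we split
\[
\frac{w(z)}{z-\mu}=\frac{w(z)-w(\mu)}{z-\mu}+\frac{w(\mu)}{z-\mu}.
\]
The first term is holomorphic in $\D$. The second term equals $w(\mu)\sum_{k\ge0}\mu^k z^{-k-1}$, which lies in $\HH^{1,-}_0$. Hence
\[
\Pi^+_0(w\mathbb{1}_\mu)=\frac{w(z)-w(\mu)}{z-\mu}-c_w,
\qquad c_w:=\Big(\frac{w(z)-w(\mu)}{z-\mu}\Big)\Big|_{z=0}=\frac{w(\mu)}{\mu},
\]
where the second equality uses $w(0)=0$. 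The same holds with $w$ replaced by $g$.

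Then
\[
g(\mu)\Gamma(w)(\widetilde\delta_\mu/\mu)-w(\mu)\Gamma(g)(\widetilde\delta_\mu/\mu)
=\frac{g(\mu)w(z)-w(\mu)g(z)}{z-\mu}-\frac{g(\mu)w(\mu)-w(\mu)g(\mu)}{\mu}.
\]
The constant term vanishes, and the second formula gives the result.

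\emph{Remaining points.} The case $\mu=0$ is handled by continuity in $\mu$, or by reading $\widetilde\delta_\mu/\mu=z/(1-\mu z)$ directly. The main obstacle is bookkeeping rather than anything conceptual. One must justify the pointwise computation of $\Pi^\pm$ on $\HH^1$, which the density argument through $\W^{1,+}_{r,0}$ handles. One must also note that $A(\mu,w)g$ indeed lies in $\HH^{1,+}_0$, i.e.\ vanishes at $0$. This follows from $w(0)=g(0)=0$, or directly from the first formula.
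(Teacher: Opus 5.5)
Your proposal is correct and follows essentially the paper's route. The paper likewise rewrites $\Gamma(\cdot)(\widetilde{\delta}_\mu/\mu)$ as $\Pi^+_0(\,\cdot\,\mathbb{1}_\mu)$ and uses that $(g(\mu)w-w(\mu)g)\,\mathbb{1}_\mu$ is holomorphic on $\D$ and vanishes at $0$. The only differences are that the paper puts both terms under a single $\Pi^+_0$, which then acts as the identity, whereas you evaluate each Hankel term separately and cancel the constants $\pm\, w(\mu)g(\mu)/\mu$; your density argument through $\W^{1,+}_{r,0}$ is extra rigor that the paper leaves implicit.
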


\begin{Remark}\label{rem:no_pole}
Let us briefly comment on the second formula in Lemma \ref{lem:A-alternative_formula}.
The function $\mathbb{1}_\mu(z)=1/(z-\mu)$ has a pole of order one at 
$\mu\in\D$ and belongs to $\HH^{1,-}$. However, the difference 
$\big(g(\mu)\,w-w(\mu)\,g\big)\,\mathbb{1}_\mu$ belongs to $\HH^{1,+}_0$ since
the analytic function $g(\mu)\,w-w(\mu)\,g$ has a zero at $\mu$.
\end{Remark}

\begin{proof}[Proof of Lemma \ref{lem:A-alternative_formula}]
For any $w,g\in\HH^{1,+}_0$ and $\mu,z\in\D$ we have that
\begin{align*}
&g(\mu)\,\big[\Gamma(w)(\widetilde{\delta}_\mu/\mu)\big](z)-
w(\mu)\,\big[\Gamma(g)(\widetilde{\delta}_\mu/\mu)\big](z)
=\Pi^+_0\Big(g(\mu) w(z) \widecheck{\widetilde{\delta}}_\mu(z)/\mu-
w(\mu) g(z) \widecheck{\widetilde{\delta}}_\mu(z)/\mu\Big)\\
&=\Pi^+_0\left(\frac{w(z) g(\mu)-w(\mu) g(z)}{z-\mu}\right)=
\frac{1}{z-\mu}\,\Big(w(z) g(\mu)-w(\mu) g(z)\Big)
\end{align*}
where we used that $\widecheck{\widetilde{\delta}}_\mu(z)/\mu=1/(z-\mu)$
and the fact that the expression on the right, when considered as a function of $z\in\D$, 
belongs to $\HH^{1,+}_0$ and coincides with $A(\mu,w)g$ 
(cf. Proposition \ref{prop:A-operator(H)} (iii), (iv)). 
This proves the first and the second formula in the Lemma. 
\end{proof}

\noindent By combining Lemma \ref{lem:A-alternative_formula} with \eqref{eq:nabla_pJ(mu)} we
obtain the following Lemma.

\begin{Lemma}\label{lem:w-evolution(x,t)}
For any given $f\in\HH^1$ and $\mu\in\D$ the $w$-component $w : [0,T)\times[0,T)\to\HH^{1,+}_0$ 
in \eqref{eq:(w,p)-solution(x,t)} satisfies the equation
\begin{equation*}
w_t=\big(w_x(\mu)\,w-w(\mu)\,w_x\big)\,\mathbb{1}_\mu
\end{equation*}
where $\mathbb{1}_\mu$ is given by \eqref{eq:1_mu}.
\end{Lemma}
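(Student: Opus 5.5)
The plan is to read off the $t$-equation directly from the Hamiltonian vector field of $J^f(\mu)$, and then convert it into an expression in $w_x$ using the $x$-equation (the flow of $H_f$).

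First, for fixed $x$, the curve $t\mapsto(w(x,t),p(x,t))$ solves the Hamiltonian equation of $J^f(\mu)$. By \eqref{eq:X_F}, its $w$-component satisfies
\[
w_t=\nabla_p J^f(\mu)=2A(\mu,w)\Gamma(w)p,
\]
where the second equality is \eqref{eq:nabla_pJ(mu)}. This uses only that the potential term $U^f$ in \eqref{eq:J(mu)} does not depend on $p$, so the $p$-gradient coincides with that of $\1 A(\mu,w)\Gamma(w)p,p\2$. The symmetry \eqref{eq:A(w)H(w)-symmetric} is what justifies the factor $2$.

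Second, for fixed $t$, the curve $x\mapsto(w(x,t),p(x,t))$ solves the Hamiltonian equation of $H_f$. By \eqref{eq:X_H_f}, or the first line of \eqref{eq:X_H-tilde}, this gives
\[
w_x=\nabla_p H_f=2\Gamma(w)p.
\]
Here the potential $V_f$ depends only on $w$, so it does not contribute. Substituting, we get $w_t=A(\mu,w)w_x$ with $w_x\in\HH^{1,+}_0$.

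Third, apply the second formula of Lemma \ref{lem:A-alternative_formula} with $g=w_x\in\HH^{1,+}_0$. This yields
\[
A(\mu,w)w_x=\big(w_x(\mu)\,w-w(\mu)\,w_x\big)\,\mathbb{1}_\mu,
\]
which is the claimed equation. Remark \ref{rem:no_pole} guarantees that the right side lies in $\HH^{1,+}_0$.

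The only delicate point is the use of the joint flow \eqref{eq:(w,p)-solution(x,t)}. We need the two partial derivatives to be given pointwise in $(x,t)$ by the respective vector fields, evaluated at the same point $(w(x,t),p(x,t))$. This is built into the definition of the joint flow, which rests on the commutation of the flows from Theorem \ref{th:J(mu)-involutivity}. Once that is granted, the rest is a direct substitution, and I expect no real obstacle beyond checking the sign and factor conventions in \eqref{eq:X_F} and \eqref{eq:nabla_pJ(mu)}.
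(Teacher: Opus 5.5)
Your proposal is correct and follows the paper's proof: $w_t=\nabla_pJ^f(\mu)=2A(\mu,w)\Gamma(w)p$ by \eqref{eq:nabla_pJ(mu)}, $w_x=2\Gamma(w)p$ by the first equation of \eqref{eq:X_H-tilde}, hence $w_t=A(\mu,w)w_x$. The claimed identity then follows from the second formula of Lemma \ref{lem:A-alternative_formula} with $g=w_x$.
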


\begin{proof}[Proof of Lemma \ref{lem:w-evolution(x,t)}]
Let $w : [0,T)\times[0,T)\to\HH^{1,+}_0$ be the $w$-component in 
\eqref{eq:(w,p)-solution(x,t)}. It then follows from \eqref{eq:nabla_pJ(mu)}, 
the first equation in \eqref{eq:X_H-tilde}, and Lemma \ref{lem:A-alternative_formula}, that
\begin{align*}
 w_t&=\nabla_pJ^f(\mu)=2A(\mu,w)\Gamma(w)p=A(\mu,w) w_x
 =\big(w_x(\mu)\,w-w(\mu)\,w_x\big)\,\mathbb{1}_\mu\,.
\end{align*}
This completes the proof of the Lemma.
\end{proof}

We are now ready to derive an evolution equation for the $Q$-transforms $q^\pm$ 
(cf. \eqref{eq:q^pm(x,t)-map}).
Consider the $C^\ell$-maps, $\ell\ge 3$,
\begin{equation}\label{eq:(w,q)-map}
w : [0,T)\times[0,T)\to\HH^{1,+}_0\quad\text{\rm and}\quad
q : [0,T)\times[0,T)\to\HH^{1,+}
\end{equation}
where the first map is the $w$-component of the joint flow \eqref{eq:(w,p)-solution(x,t)}
of $H_f$ and $J^f(\mu)$ and the second map is the $Q$-transform $q^+ : [0,T)\times[0,T)\to\HH^{1,+}$ 
of \eqref{eq:(w,p)-solution(x,t)} which, for simplicity, 
is denoted by $q$. We have the following Theorem.

\begin{Theorem}\label{th:(w,q)-evolution}
For any given $f\in\HH^1$ and $\mu\in\D$ the map 
$(w,q)\in C^\ell\big([0,T)\times[0,T),\HH^{1,+}_0\times\HH^{1,+}\big)$, $\ell\ge 3$,
given by \eqref{eq:(w,q)-map}, satisfies the system of equations 
\begin{equation}\label{eq:mu-equation}
\left\{
\begin{array}{l}
q_t+\big(w_{xxx}(\mu)+4w_x(\mu)\,q+2w(\mu)\,q_x\big)\,\mathbb{1}_\mu=0,\\
w_{xxx}+4 w_x q+2w\,q_x=0,
\end{array}
\right.
\end{equation}
where $\mathbb{1}_\mu$ is given by \eqref{eq:1_mu}.
The statement also holds with $q$ replaced by the map $q^- :[0,T)\times[0,T)\to\HH^{1,-}_0$
in \eqref{eq:q^pm(x,t)-map}.
\end{Theorem}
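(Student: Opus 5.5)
The second equation in \eqref{eq:mu-equation} is \eqref{eq:w-evolution(x)'}, which was obtained by differentiating \eqref{eq:w-evolution(x)} in $x$. So it remains to derive the $t$-evolution of $q$. My plan is to avoid computing $q_t=-2\Pi^+(fw_t/w^3)$ directly, since that would require controlling the Szeg\"o projection of a product. Instead I would differentiate the $x$-equation \eqref{eq:w-evolution(x)} in $t$ and eliminate $w_t$ using Lemma \ref{lem:w-evolution(x,t)}. The commutativity of the joint flow (Theorem \ref{th:J(mu)-involutivity}) makes all mixed partials of $w$ commute, since $w\in C^\ell$ with $\ell\ge3$. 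I also expect the constant $C_f^\pm=S_+J^f(w,p)$ (resp. $+f$) to be independent of $t$, because $J^f$ is conserved along the flow of $J^f(\mu)$ by Theorem \ref{th:J(mu)-involutivity}(ii).

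\textbf{Step 1.} Differentiate \eqref{eq:w-evolution(x)} in $t$. This gives
\[
w_tw_{xx}+ww_{xxt}-w_xw_{xt}=-4ww_tq-2w^2q_t,
\]
which I rearrange as
\[
2w^2q_t=-\big(w_tw_{xx}+ww_{xxt}-w_xw_{xt}+4ww_tq\big).
\]

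\textbf{Step 2.} Substitute $w_t=(a\,w-b\,w_x)\mathbb{1}_\mu$ with $a:=w_x(\mu)$ and $b:=w(\mu)$. Note that $a$ and $b$ are functions of $(x,t)$ only, not of $z$. Then $w_{xt}$ and $w_{xxt}$ follow by differentiating in $x$, using $\partial_x(w_x(\mu))=w_{xx}(\mu)$ and that $\mathbb{1}_\mu$ does not depend on $x$. Expanding, the terms with coefficient $a$ give $a\,\mathbb{1}_\mu\big(ww_{xx}-\tfrac12w_x^2+2w^2q\big)$ up to factors, and the terms with $b$ give $b\,\mathbb{1}_\mu\,w(w_{xxx}+4w_xq)$. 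The remaining terms contain $w_{xx}(\mu)$ and $w_{xxx}(\mu)$.

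Each group should then be simplified using the two $x$-identities. First, \eqref{eq:w-evolution(x)} at a general point $z$ and evaluated at $\mu$ gives
\[
w_{xx}(\mu)\,b-\tfrac12a^2=-2b^2q(\mu)-2C(\mu).
\]
Second, \eqref{eq:w-evolution(x)'} gives $w_{xxx}=-4w_xq-2wq_x$, and its evaluation at $\mu$ gives the analogous relation there. Dividing by $2w^2$, which is legitimate since $w\in\OO$ is nonvanishing on $\T$, all $C$-terms and all $w^{-1},w^{-2}$ terms should cancel. The result should be
\[
q_t=-\big(w_{xxx}(\mu)+4a\,q+2b\,q_x\big)\mathbb{1}_\mu .
\]
This is the Lax-type structure familiar from KdV hierarchies, $q_t=-(\partial^3+4q\partial+2q_x)$ applied to the generating function.

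\textbf{Obstacle.} The main obstacle is this algebraic cancellation in Step 2, in particular checking that the $C$-terms and the terms with $w_{xx}(\mu)$ cancel exactly. Here $C(\mu)$ must be matched with $C$ at $z$. If the direct elimination does not close, my fallback is to work with the combination $\Phi:=ww_{xx}-\tfrac12w_x^2+2w^2q$. This $\Phi$ equals $-2C$, so $\Phi_t=0$. I would compute $\Phi_t$ from Lemma \ref{lem:w-evolution(x,t)} as an expression of the form $\mathbb{1}_\mu$ times a bilinear expression. Taking differences of this expression at $z$ and at $\mu$ should isolate $q_t$.

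\textbf{The case of $q^-$.} Equation \eqref{eq:w-evolution(x)} holds with $q^-$ and $C_f^-$, and $C_f^-$ is again $t$-independent. So the identical computation applies verbatim and gives the same system with $q$ replaced by $q^-$.
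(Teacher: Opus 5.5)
Your route is the paper's: differentiate \eqref{eq:w-evolution(x)} in $t$ (with $C_f^\pm$ independent of $t$, as you correctly note), insert $w_t=(a\,w-b\,w_x)\mathbb{1}_\mu$ from Lemma \ref{lem:w-evolution(x,t)}, and eliminate with \eqref{eq:w-evolution(x)'}. However, Step 2 as you describe it would not close. You differentiate $b=w(\mu)$ as if it did not depend on $x$, but in fact $\partial_x b=w_x(\mu)=a$. Taking this into account, $w_{xt}=\big(w_{xx}(\mu)\,w-b\,w_{xx}\big)\mathbb{1}_\mu$ and $w_{xxt}=\big(w_{xxx}(\mu)\,w+w_{xx}(\mu)\,w_x-a\,w_{xx}-b\,w_{xxx}\big)\mathbb{1}_\mu$. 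The $a$-terms and the $w_{xx}(\mu)$-terms then cancel identically:
\[
w_tw_{xx}+w\,w_{xxt}-w_x\,w_{xt}=w\big(w_{xxx}(\mu)\,w-w(\mu)\,w_{xxx}\big)\mathbb{1}_\mu .
\]
Inserted into your Step 1 and divided by $w$, this is the paper's \eqref{eq:wq-relation1}. No $C$ ever appears, and \eqref{eq:w-evolution(x)} evaluated at $z=\mu$ is never used. The grouping you anticipate is a multiple of $a\,\mathbb{1}_\mu\big(ww_{xx}-\tfrac12w_x^2+2w^2q\big)$ plus a surviving $w_{xx}(\mu)\,w\,w_x\,\mathbb{1}_\mu$, and that is exactly what dropping $\partial_xb$ produces. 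The resulting $-4a\,C\,\mathbb{1}_\mu$ is an error, not something to be cancelled. The identity at $\mu$ could not cancel it anyway, since it involves only the number $C(\mu)$, not the function $C$. Your fallback via $\Phi$ is not an alternative either: $\Phi_t=0$ is literally the identity of Step 1.

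The second problem is the constant you predict. Moving $4ww_tq$ to the other side and using $w_{xxx}+4w_xq=-2wq_x$ (at $z$, not at $\mu$) gives
\[
-2w\,q_t=w\big(w_{xxx}(\mu)+4w_x(\mu)\,q+2w(\mu)\,q_x\big)\mathbb{1}_\mu ,
\]
that is, $2q_t+\big(w_{xxx}(\mu)+4w_x(\mu)\,q+2w(\mu)\,q_x\big)\mathbb{1}_\mu=0$, not $q_t=-(\cdots)\mathbb{1}_\mu$ as you assert. This is also exactly where the paper's own proof ends. So the first line of \eqref{eq:mu-equation} as printed is missing this factor $2$, and your claimed outcome reproduces that slip rather than what the algebra yields.

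The direct route you set aside confirms this and is short. With $F=f/w^2$ one has $q_t=-2\Pi^+(f\,w_t/w^3)=-\Pi^+\big((2aF+bF_x)\mathbb{1}_\mu\big)$. Since $\mathbb{1}_\mu$ has only negative modes, $\Pi^+(G\,\mathbb{1}_\mu)=\big(\Pi^+G-(\Pi^+G)(\mu)\big)\mathbb{1}_\mu$. With $\Pi^+G=2aq+bq_x$ and the second equation at $z=\mu$, this gives $q_t=-\tfrac12\big(w_{xxx}(\mu)+4aq+2bq_x\big)\mathbb{1}_\mu$. So prove (and state) the first equation with $2q_t$, or rescale $t$; the $q^-$ case then goes through verbatim, as you say.
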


\noindent In this way, for any choice of $f\in\HH^{1}$ we obtain two types of 
solutions of \eqref{eq:mu-equation}: a ``plus'' solution,
\[
(w,q) : [0,T)\times[0,T)\to\HH^{1,+}_0\times\HH^{1,+},
\]
and a ``minus'' solution,
\[
(w,q^-) : [0,T)\times[0,T)\to\HH^{1,+}_0\times\HH^{1,-}_0\,.
\]
In view of \eqref{eq:Q+<->Q-}, these solutions are related to each other by 
the nonlinear transformation $q=q^-+f/w^2$\,.

\begin{Remark}
Since $w_{xxx}(\mu)+4w_x(\mu)\,q(\mu)+2w(\mu)\,q_x(\mu)=0$ the holomorphic in 
the variable $z\in\D$ function 
$z\mapsto w_{xxx}(\mu)+4w_x(\mu)\,q(z)+2w(\mu)\,q_x(z)$, $\D\to\C$,
has a zero at $z=\mu$ for any $(x,t)\in[0,T)\times[0,T)$.
In particular, the term
$\big(w_{xxx}(\mu)+4w_x(\mu)\,q+2w(\mu)\,q_x\big)\,\mathbb{1}_\mu$
in \eqref{eq:mu-equation} does not have a pole at $z=\mu$ and belongs to $\HH^{1,+}$.
\end{Remark}

\begin{proof}[Proof of Theorem \ref{th:(w,q)-evolution}]
Take $f\in\HH^1$ and $\mu\in\D$ and let $w : [0,T)\times[0,T)\to\HH^{1,+}_0$
and $q : [0,T)\times[0,T)\to\HH^{1,+}$ be as assumed in Theorem \ref{th:(w,q)-evolution}.
We then differentiate the equality \eqref{eq:w-evolution(x)} with respect to $t$ to
obtain that
\begin{equation}\label{eq:w-relation1}
w_t w_{xx}+w (w_t)_{xx}-w_x (w_t)_x=-4 w w_t q-2 w^2 q_t
\end{equation}
where we used the smoothness of the map \eqref{eq:(w,p)-solution(x,t)} to changed the order 
of differentiations. By Lemma \ref{lem:w-evolution(x,t)},
\begin{equation}\label{eq:w-relation2}
w_t=\big(w_x(\mu)\,w-w(\mu)\,w_x\big)\,\mathbb{1}_\mu\,.
\end{equation}
We then substitute \eqref{eq:w-relation2} into \eqref{eq:w-relation1} to obtain 
(after several cancellations) that
\begin{equation}\label{eq:wq-relation1}
-2 w\,q_t-4\big(w_x(\mu)\,w-w(\mu)\,w_x\big)\,q\,\mathbb{1}_\mu
=\big(w_{xxx}(\mu)\,w-w(\mu)\,w_{xxx}\big)\,\mathbb{1}_\mu\,.
\end{equation}
It now follows from \eqref{eq:wq-relation1} that
\begin{align}\label{eq:wq-relation2}
-2 w\,q_t&=\big(4 w_x(\mu) w q\,\mathbb{1}_\mu
-4 w(\mu) w_x q\,\mathbb{1}_\mu\big)
+\big(w_{xxx}(\mu) w\,\mathbb{1}_\mu-
w(\mu) w_{xxx}\,\mathbb{1}_\mu\big)\nonumber\\
&=\big(w_{xxx}(\mu)+4w_x(\mu)\,q+2 w(\mu)\,q_x\big)\,w\,\mathbb{1}_\mu
-\big( w_{xxx}+4 w_x\,q+2 w\,q_x\big)\,w(\mu)\,\mathbb{1}_\mu\nonumber\\
&=\big(w_{xxx}(\mu)+4w_x(\mu)\,q+2 w(\mu)\,q_x\big)\,
w\,\mathbb{1}_\mu\nonumber
\end{align}
where we used \eqref{eq:w-evolution(x)'} to conclude that the second term in brackets, that
appears in the second line, vanishes. This completes the proof of the Theorem in the case when
$q=q^+$. The case of $q^-$ is proved in the same way.
\end{proof}

Theorem \ref{th:(w,q)-evolution} allows us to obtain solutions of a number of
PDEs by choosing specific values of the parameter $f\in\HH^1$.
We illustrate this by considering the following Examples.

\noindent{\em Example (BKM systems, \cite{BKM1}).} 
These systems correspond to minus solutions of \eqref{eq:mu-equation} with a particular
choice of $f\in\HH^1$. More specifically, we take
\begin{equation}\label{eq:f_BKM}
f(z):=z^2/m(z)
\end{equation}
where $m(z)$ is a polynomial of degree $n\ge 1$ whose roots are contained in $\D$.
We will also assume that $w\in\OO_1$ where $\OO_1$ is the open set in $\HH^{1,+}_0$,
\[
\OO_1:=\big\{w\in\HH^{1,+}_0\,\big|\,
z=0\,\,\text{\em is the only zero of}\,\, w\,\, 
\text{\em in the closure of}\,\,\D\big\}\,.
\]
Assume that the polynomial $m(z)$ has zeros $z_1,...,z_s$, $z_j\ne z_k$ for $j\ne k$, 
with multiplicities $n_1,...,n_s$ respectively, $\sum_{1\le j\le s}n_j=n$.

We have the following Lemma.

\begin{Lemma}\label{lem:BKM}
Take a polynomial $m(z)$ of degree $n\ge 1$ as above and assume that $f\in\HH^1$ is 
of the form \eqref{eq:f_BKM}. Then we have:
\begin{itemize}
\item[(i)] For any choice of $w\in\OO_1$ the quantity
$\sigma=-m\,\Pi^-_0(f/w^2)$ is a polynomial of degree $\le n$ such that
\[
Q_f^-(w)\equiv-\Pi^-_0(f/w^2)=\sigma/m\in\HH^{1,-}_0\,.
\]
\item[(ii)] The map 
\[
\mathcal{S}_m : \HH^{1,+}_0\cap\OO_1\to\HH^{1,+},\quad w\mapsto\mathcal{S}_m(w):=-m\,\Pi^-_0(f/w^2),
\]
is analytic. In particular, the coefficients of the polynomial $\sigma$ in (i) depend analytically
on $w\in\HH^{1,+}_0\cap\OO_1.$
\end{itemize}
\end{Lemma}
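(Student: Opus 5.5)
The plan is to compute $\Pi^-_0(f/w^2)$ explicitly via a partial fraction decomposition on the unit circle. For $w\in\OO_1$ we write $w(z)=z\,u(z)$ with $u\in\HH^{1,+}$ and $u(z)\ne 0$ on the closed disk $\overline{\D}$. By Lemma \ref{lem:1/w} (iv), $1/u\in\HH^{1,+}$ and therefore also $1/u^2\in\HH^{1,+}$. This gives, on $\mathbb{S}$,
\[
\frac{f}{w^2}=\frac{z^2}{m(z)\,z^2u(z)^2}=\frac{1}{m(z)}\cdot\frac{1}{u(z)^2},
\]
so the factor $z^2$ in \eqref{eq:f_BKM} exactly cancels the double zero of $w^2$ at the origin. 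The function $h:=1/u^2$ is holomorphic in $\D$, continuous up to the boundary, and lies in $\HH^{1,+}$. The only singularities of $h/m$ in $\D$ are the poles at $z_1,\dots,z_s$, of orders $n_1,\dots,n_s$.

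The key step is to split off the principal parts. We set
\[
P(z):=\sum_{j=1}^s\sum_{l=1}^{n_j}\frac{c_{jl}}{(z-z_j)^l},
\]
where the $c_{jl}$ are the Laurent coefficients of $h/m$ at $z_j$. Then $h/m-P$ is holomorphic in $\D$. It has the form $(h-mP)/m$ with $h\in\HH^{1,+}$ and $mP$ a polynomial, and since $|m|\ge c>0$ near $\mathbb{S}$, it belongs to $\HH^{1,+}$. One can see this by noting that $(h-mP)/m$ is analytic across each $z_j$ and using local Banach algebra arguments; alternatively, write $1/m=$ (polynomial-free) function and argue via Fourier coefficients.

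Each term $(z-z_j)^{-l}$ with $|z_j|<1$ expands on $\mathbb{S}$ as a series in $1/z$ starting at $z^{-l}$, so it lies in $\HH^{1,-}_0$. Consequently $\Pi^-_0(f/w^2)=P$ and $\Pi^+(f/w^2)=h/m-P$. Multiplying by $m$, $-mP$ is a polynomial, because $m$ contains the factor $(z-z_j)^{n_j}$ which clears every denominator. Its degree is at most $n-1\le n$, since $P=O(1/z)$ at infinity. This gives $\sigma=-mP$ and $Q_f^-(w)=\sigma/m$, proving (i).

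For (ii), analyticity of $w\mapsto\mathcal{S}_m(w)=-m\,\Pi^-_0(f/w^2)$ will follow from the facts already established. The map $w\mapsto f/w^2$ is analytic from $\OO$ into $\HH^1$ by Lemma \ref{lem:1/w-analytic} and the Banach algebra property; note that $\OO_1\subseteq\OO$, since $w\in\OO_1$ has no zeros on $\mathbb{S}$. The projection $\Pi^-_0$ is bounded, and multiplication by the fixed polynomial $m$ is bounded on $\HH^1$. The image lies in the finite-dimensional space of polynomials of degree $\le n$, by (i). Coefficients of elements of that subspace are continuous linear functionals (Fourier coefficients), hence they depend analytically on $w$.

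The main obstacle is the claim that $h/m-P\in\HH^{1,+}$, that is, verifying the Sobolev regularity rather than mere holomorphy. I would handle it by writing $h/m-P=\big(h-\sum_j T_j\big)/m$, where $T_j$ is the Taylor polynomial data at $z_j$. Cleaner still, I would avoid this entirely: observe that $\Pi^-_0(h/m)$ equals $\Pi^-_0(P)+\Pi^-_0(\text{holomorphic remainder})$, where the remainder's negative Fourier coefficients vanish by Cauchy's theorem applied to contour integrals on circles $|z|=\rho\uparrow1$. That identity only requires continuity on $\overline{\D}$, and membership of $P$ in $\HH^{1,-}_0$ is explicit.
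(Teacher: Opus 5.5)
Your proposal is correct and follows essentially the same route as the paper. You write $f/w^2=h/m$ with $h=1/u^2\in\HH^{1,+}$ via Lemma \ref{lem:1/w} (iv), identify $\Pi^-_0(f/w^2)$ with the sum of the principal parts at the zeros of $m$, and get (ii) from Lemma \ref{lem:1/w-analytic} together with the finite-dimensionality of the image.

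Your Cauchy-theorem argument that $h/m-P\in\HH^{1,+}$ fills in a step the paper leaves implicit. It also shows that the constant $C$ in \eqref{eq:principal_parts} is zero, so in fact $\deg\sigma\le n-1$. Like the paper, you implicitly take the zero of $w$ at $z=0$ to be simple. This is what makes the factorization $w=zu$ with $u$ zero-free on $\overline{\D}$ possible, and it is consistent with the paper's claim that $\OO_1$ is open.
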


\begin{proof}[Proof of Lemma \ref{lem:BKM}]
Take $f\in\HH^1$ of the form \eqref{eq:f_BKM} where the roots of the polynomial $m$, $\deg m=n$, 
are contained in $\D$. It then follows from Lemma \ref{lem:1/w} (iv) and the fact that
$w\in\OO_1$ that 
\begin{equation*}
\frac{f(z)}{w^2(z)}=\frac{g(z)}{\prod_{j=1}^s(z-z_j)^{n_j}}
\end{equation*}
for some $g\in\HH^{1,+}$, and hence, $f(z)/w(z)^2$ is a meromorphic function on $\D$ with poles at 
the zeros $z_1,...,z_s$ of the polynomial $m(z)$.
This implies that, up to a constant term,  $\Pi^-(f/w^2)$ is equal to the sum of 
the principal parts of $f(z)/w^2(z)$ at the poles $z_1,...,z_s$, and hence,
\begin{equation}\label{eq:principal_parts}
\big[\Pi^-_0(f/w^2)\big](z)=\sum_{k=1}^s\Big(\sum_{j=1}^{n_k}\frac{a_{kj}}{(z-z_k)^j}\Big)-C,
\end{equation}
where $a_{kj}$, $1\le k\le s$, $1\le j\le n_k$, are constants and $C$ is chosen so that
the right side of \eqref{eq:principal_parts} has vanishing $0$-th mode.
It then follows from \eqref{eq:principal_parts} that 
$\sigma(z)=-m(z)\,\big[\Pi^-_0(f/w^2)\big](z)$ is a polynomial of degree $\le n$. 
This proves item (i) of the Lemma.
In order to prove item (ii) we note that it follows from Lemma \ref{lem:1/w-analytic} and
the Banach algebra property of $\HH^1$ that the map
\[
\HH^{1,+}_0\cap\OO_1\to\HH^1,\quad w\mapsto -m\,\Pi^-_0(f/w^2),
\]
is analytic. In view of \eqref{eq:principal_parts}, the image of this map is a polynomial 
of degree $\le n$, and hence, it is contained in the closed subspace $\HH^{1,+}\subseteq\HH^1$.
This proves item (ii), and the Lemma.
\end{proof}

Let $(w,q_-)\in C^\ell\big([0,T)\times[0,T),\OO_1\times\HH^{1,-}_0\big)$, $\ell\ge 3$, be
a minus solution of \eqref{eq:mu-equation} given by Theorem \ref{th:existence} with initial data
$(w_0,p_0)\in\OO_1\times\HH^{1,-}_0$.
It then follows from Lemma \ref{lem:BKM} that $q_-=\sigma/m$ where 
$\sigma(x,t)=\mathcal{S}_m\big(w(x,t)\big)$, $x,t\in[0,T)$, is a polynomial of degree $\le n$ such that
\begin{equation*}
\sigma\in C^\ell\big([0,T)\times[0,T),\C^{n+1}\big)\,.
\end{equation*}
(Here, the space of polynomials of degree $\le n$ is identified with $\C^{n+1}$.)
The Corollary below then follows directly from Theorem \ref{th:existence}.

\begin{Corollary}\label{Coro:BKM}
For any choice of the polynomial $m(z)$ of degree $n\ge 1$ as above, $\mu\in\D$, $f\in\HH^1$ of 
the form \eqref{eq:f_BKM} the map 
$(w,\sigma)\in C^\ell\big([0,T)\times[0,T),\HH^{1,+}_0\times\C^{n+1}\big)$,
$\ell\ge 3$, is a solution of the BKM system
\begin{equation}\label{eq:BKM}
\left\{
\begin{array}{l}
\sigma_t+\big(m\,w_{xxx}(\mu)+4w_x(\mu)\,\sigma+2w(\mu)\,\sigma_x\big)\,\mathbb{1}_\mu=0,\\
m\,w_{xxx}+4 w_x\sigma+2w\,\sigma_x=0\,.
\end{array}
\right.
\end{equation}
\end{Corollary}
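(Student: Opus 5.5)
The plan is to substitute $q_-=\sigma/m$ into the minus version of the system \eqref{eq:mu-equation} from Theorem \ref{th:(w,q)-evolution} and clear denominators by multiplying through by $m$. The regularity statement comes first. By Theorem \ref{th:existence} and the construction of the joint flow \eqref{eq:(w,p)-solution(x,t)}, $w$ is a $C^\ell$ map into $\OO_1\subseteq\OO$. Note that $\OO_1\subseteq\OO$, since $w\in\OO_1$ has no zeros on $\mathbb{S}$. By Lemma \ref{lem:BKM} (ii), $\mathcal{S}_m$ is analytic, so $\sigma=\mathcal{S}_m\circ w$ is $C^\ell$ with values in polynomials of degree $\le n$, identified with $\C^{n+1}$. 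Since $\mathcal{S}_m$ takes values in a finite-dimensional closed subspace of $\HH^{1,+}$, its derivatives in $x$ and $t$ stay in that subspace. Hence $\sigma_x$ and $\sigma_t$ are polynomials of degree $\le n$ and can be computed coefficientwise.

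Next I will apply Theorem \ref{th:(w,q)-evolution} in its $q^-$ form. One remark is needed: Lemma \ref{lem:BKM} (i) gives $q^-=Q_f^-(w)=-\Pi^-_0(f/w^2)=\sigma/m$ for every $(x,t)$. Because $m$ does not depend on $x$ or $t$, we have $q^-_x=\sigma_x/m$ and $q^-_t=\sigma_t/m$. These identities hold as meromorphic functions, and on $\mathbb{S}$ they hold as elements of $\HH^1$, since $1/m\in\HH^1$ by Lemma \ref{lem:1/w} (iii): $m$ has no zeros on $\mathbb{S}$.

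For the second equation of \eqref{eq:mu-equation}, namely $w_{xxx}+4w_xq^-+2wq^-_x=0$, I multiply by $m$. This gives $m\,w_{xxx}+4w_x\sigma+2w\,\sigma_x=0$ directly.

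For the first equation, $q^-_t+\big(w_{xxx}(\mu)+4w_x(\mu)q^-+2w(\mu)q^-_x\big)\mathbb{1}_\mu=0$, I again multiply by $m$. The scalar coefficient $w_{xxx}(\mu)$ becomes $m\,w_{xxx}(\mu)$, where $m$ is now a function of $z$, and the result is the first line of \eqref{eq:BKM}.

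The main subtlety is justifying these manipulations when $\mu$ may coincide with a root of $m$ or when $q^-$ has poles in $\D$. I will handle this by reading the equations as identities between functions on the circle $\mathbb{S}$, which is the setting in which Theorem \ref{th:(w,q)-evolution} holds, in $\HH^1$. There, multiplication by the polynomial $m$ is a bounded operator, and $\mathbb{1}_\mu$ is smooth because $|\mu|<1$. Since both sides are rational or holomorphic, the identity then extends by analytic continuation.
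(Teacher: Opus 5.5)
Your proposal is correct and is essentially the paper's argument. Like the paper, you substitute $q^-=Q_f^-(w)=\sigma/m$ from Lemma \ref{lem:BKM} into the minus version of \eqref{eq:mu-equation} and multiply through by the $(x,t)$-independent polynomial $m$; reading these as identities in $\HH^1$ on the circle supplies the justification the paper leaves implicit, and the only point both you and the paper simply assume is that the joint flow stays in $\OO_1$, which needs $\OO_1$ to be open (true once the zero of $w$ at $z=0$ is required to be simple, as Lemma \ref{lem:BKM} in fact requires) and possibly a smaller $T$.
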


\noindent Theorem \ref{th:existence} allows us to construct solutions of \eqref{eq:BKM} where
$\sigma$ is not necessarily a polynomial but a generic element of $\HH^1$. 
In the Remark below we discuss this in more detail.

\begin{Remark}
For a given polynomial $m(z)$ of degree $n\ge 1$, Lemma \ref{lem:BKM} can easily be generalized to 
the case where $w\in\OO$ has $n_0\ge 1$ zeros inside the unit disk $\D$. In that case, $\sigma$ is
a fraction of two polynomials,
\begin{equation}\label{eq:P/Q}
\sigma(z)=\frac{P(z)}{Q(z)},\quad\deg P=2n_0+n,\quad\deg Q=2 n_0,
\end{equation}
whose coefficients depend analytically on $w\in\OO$.
This allows us to generalize Corollary \ref{Coro:BKM} to the case where
$\sigma\in C^\ell\big([0,T)\times[0,T),\HH^1\big)$ is of the form \eqref{eq:P/Q}
and satisfies \eqref{eq:BKM}. Note in addition, that the parameter function $f\in\HH^1$ 
can also be chosen to have an essential singularity in $\D$.
\end{Remark}

\noindent{\em Example (KdV-type systems).} 
These systems are related to the finite dimensional dynamics discussed in 
Section \ref{sec:finite_dynamics}.
As in \eqref{eq:(w,p)-solution(x,t)}, it follows from Theorem \ref{th:J_k-involutivity}
and Theorem \ref{th:existence} that for any given $f\in\HH^{1}$, $k\ge 1$, and
$(w_\bullet,p_\bullet)\in\OO\times\HH^{1,+}_0$,
there exists an open neighborhood $\mathcal{V}$ of $(w_\bullet,p_\bullet)$ in $\OO\times\HH^{1,+}_0$
and $T>0$ such that for any $(w_0,p_0)\in\mathcal{V}$ there exists a $C^\ell$-map, $\forall\ell\ge 2$,
\begin{equation}\label{eq:(w,p)-solution(x,t)_bis}
(w,p)\in C^\ell\big([0,T)\times[0,T),\OO\times\HH^{1,+}_0\big),
\end{equation}
such that $(w,p)|_{(x,t)=(0,0)}=(w_0,p_0)$  and for any given $t\in[0,T)$ the curve
\[
[0,T)\to\OO\times\HH^{1,+}_0,\quad x\mapsto\big(w(x,t),p(x,t)\big),
\]
is the unique solution of the Hamiltonian equation with Hamiltonian $H_f$ and initial data
$\big(w(0,t),p(0,t)\big)$, and for any given $x\in[0,T)$ the curve
\[
[0,T)\to\OO\times\HH^{1,+}_0,\quad t\mapsto\big(w(x,t),p(x,t)\big),
\]
is the unique solution of the Hamiltonian equation with Hamiltonian $J^f_k$ with
initial data $\big(w(x,0),p(x,0)\big)$. This is the joint flow of $H_f$ and $J^f_k$.

For a given $N\ge 2$ we choose $f\in\HH^1$, $\deg f=2N+3$ (see Definition \ref{def:deg}), such that
\begin{equation}\label{eq:f_KdV}
f_{2N+3}=1\quad\text{\em and}\quad f_{2N+2}=0\,.
\end{equation}
We then consider the map \eqref{eq:(w,p)-solution(x,t)_bis} that corresponds to the Hamiltonian flows
of $H_f$ and $J^f_k$ with $k=N-1$, and initial data $(w_0,p_0)\in\mathcal{HF}_N$ 
(cf. Section \ref{sec:finite_dynamics} for the notation). By Proposition \ref{prop:finite_dynamics},
$w(x,t)\in\mathcal{F}_N$ for any $x,t\in[0,T)$. We choose $w_0$ and $T>0$ so that the zeros of 
the polynomial $w(x,t)$ are contained in the unit disk $\D$. It then follows from
Lemma \ref{lem:Q_on_F_N} that the $Q$-transform $q(x,t):=Q_f\big(w(x,t)\big)$, $x,t\in[0,T)$,
of \eqref{eq:(w,p)-solution(x,t)_bis} is of the form
\[
q(x,t)=z-2 w_N(x,t),\quad x,t\in[0,T),
\]
where $z\in\D$ is the variable associated with the space $\HH^{1,+}_0$, considered as
a space of holomorphic functions on the unit disk $\D$, and $w_N(x,t)$ is the coefficient of 
$w(x,t)$ in front of $z^N$. A direct computation then shows that the $0$-th mode 
$q_0=-2 w_N$ of $q$ satisfies the KdV equation
\begin{equation}\label{eq:KdV}
2q_{0t}+q_{0xxx}+6q_0q_{0x}=0
\end{equation}
for any $x,t\in[0,T)$. 

\begin{Remark}
The equation \eqref{eq:KdV} can be easily obtained formally, by comparing the coefficients
in front of $\mu^N$ in equation \eqref{eq:mu-equation}, written as
\[
(z-\mu)\,q_t(z)+w_{xxx}(\mu)+4 w_x(\mu)\,q(z)+2 w(\mu)\,q_x(z)=0,
\]
where we set $z=0$ and $q_t=\sum_{j=1}^Nq_{t_j}\mu^j$ where $q_{t_j}$ the time derivative
of $q$ with respect to the flow of $J^f_j$.
\end{Remark}

\noindent In view of Lemma \ref{lem:Q_on_F_N}, by increasing the degree of $f\in\HH^1$, 
$\deg f>2N+3$, and no longer assuming \eqref{eq:f_KdV}, we obtain vector analogs of 
the KdV equation. We will not discuss this case in detail here.

\begin{Remark}
Note that the definitions and the statements proved in this Section continue 
to hold with $\HH^{1,+}_0$ replaced by $\W^{1,+}_{r,0}$.
\end{Remark}

\appendix
\section{Appendix}
In this Appendix, for the convenience of the reader, we provide the proof of 
Lemma \ref{lem:W-space}.

\begin{proof}[Proof of Lemma \ref{lem:W-space}]
The proof of the Lemma is straightforward.
Denote by $\ell^1_r$ the linear space of complex-valued sequences $w=(w_k)_{k\in\Z}$
equipped with the weighted norm
\[
\|w\|_{\ell^1_r}:=\Big(\sum_{k\in\Z}|w_k|^2r^{2|k|}\1 k\2^2\Big)^{1/2}
\]
and let $\ell^2$ be the Hilbert space of complex-valued square summable sequences 
equipped with the norm $\|w\|_{\ell^2}:=\sum_{k\in\Z}|w_k|^2$.
Then, the map $\ell^2\to\ell^1_r$, $w\mapsto\big(w_kr^{-|k|}\1 k\2^{-1}\big)_{k\in\Z}$,
is an isometry. This proves that $\ell^1_r$ is a Hilbert space 
when equipped with the scalar product 
$(v,w)_{\ell^1_r}:=\sum_{k\in\Z}v_k\overline{w}_kr^{2|k|}\1 k\2^2$.
On the other side, it is clear that the map
\begin{equation}\label{eq:W->l}
\W^1_r\to\ell^1_r,\quad f\mapsto(f_k)_{k\in\Z},
\end{equation}
where $(f_k)_{k\in\Z}$ are the Laurent coefficient of the holomorphic function $f$,
is an isomorphism of linear spaces. In fact, since the Laurent coefficients characterize a holomorphic
function in an annulus uniquely, the map \eqref{eq:W->l} is injective.
In order to see that the map is onto we take an arbitrary element $w\in\ell^1_r$ and consider 
the series
\begin{equation}\label{eq:f(z)_Laurent_expansion}
f(z):=\sum_{k\ge 0}\frac{w_{(-k)}}{z^k}+
\sum_{k\ge 1}w_kz^k\,.
\end{equation}
where $1/r\le|z|\le r$. It follows from \eqref{eq:f(z)_Laurent_expansion} and
Cauchy-Schwarz inequality that for any $1/r\le|z|\le r$,
\begin{equation}\label{eq:L^infty-inequality}
|f(z)|\le\sum_{k\in\Z}|w_k| r^{|k|}\le\Big(\sum_{k\in\Z}\frac{1}{\1 k\2^2}\Big)^{1/2}
\Big(\sum_{k\in\Z}|w_k|^2 r^{2|k|}\1 k\2^2\Big)^{1/2}\le C\|w\|_{\W^1_r}
\end{equation}
where $C:=\Big(\sum_{k\in\Z}\frac{1}{\1 k\2^2}\Big)^{1/2}=\sqrt{1+\pi^2/3}\le 3$.
This implies that the series \eqref{eq:f(z)_Laurent_expansion} converges absolutely 
and uniformly to a holomorphic function $f$ on $\A_r$ that extends to a continuous function 
on the closure $1/r\le|z|\le r$. In particular, $f\in\W^1_r$. Hence, the map \eqref{eq:W->l} is 
an isomorphism of linear spaces, as claimed. The map \eqref{eq:W->l} is an isometry by 
the definition of the norm \eqref{eq:W1-norm}. This proves (i). Item (iii) then follows from 
\eqref{eq:L^infty-inequality} and the discussion above. Let us now prove (ii).
Take $f,g\in\W^1_r$ and consider the function $fg$. It is clear that $fg$ is holomorphic on $\A_r$.
Let us now estimate the norm \eqref{eq:W1-norm} of $fg$. Denote by $\ell^1$ the Banach space 
of absolutely summable complex-valued sequences $(w)_{k\in\Z}$ equipped with 
the norm $\|w\|_{\ell^1}:=\sum_{k\in\Z}|w_k|$. We have
\begin{align}\label{eq:product_estimate1}
\|fg\|_{W^1_r}&=\big\|\big((fg)_kr^{|k|}\1 k\2\big)_{k\in\Z}\big\|_{\ell^2}\le
\big\|\big(\sum_{l\in\Z}|f_{k-l}| |g_l|\,(r^{|k-l|}r^{|l|})(\1 k-l\2+\1 
l\2)\big)_{k\in\Z}\big\|_{\ell^2}\nonumber\\
&\le\big\|\big(\sum_{l\in\Z}\big(|f_{k-l}|r^{|k-l|}\1 k-l\2\big)
\big(|g_l|r^{|l|}\big)\big)_{k\in\Z}\big\|_{\ell^2}
+\big\|\big(\sum_{l\in\Z}\big(|f_{k-l}|r^{|k-l|}\big)\big(|g_l|r^{|l|}\1 
l\2\big)\big)_{k\in\Z}\big\|_{\ell^2}\nonumber\\
&\le\|f\|_{\W^1_r}\big\|\big(|g_l| r^{|l|}\big)_{l\in\Z}\big\|_{\ell^1}+
\|g\|_{\W^1_r}\big\|\big(|f_l| r^{|l|}\big)_{l\in\Z}\big\|_{\ell^1}
\end{align}
where we used that $|k|\le|k-l|+|l|$ and $\1 k\2\le\1 k-l\2+\1 l\2$ for any $k,l\in\Z$
as well as the Young's inequality $\|a\star b\|_{\ell^2}\le\|a\|_{\ell^2}\|b\|_{\ell^1}$
on the convolution of two sequences $a\in\ell^2$ and $b\in\ell^1$. 
The estimates in \eqref{eq:L^infty-inequality} imply that
\[
\big\|\big(|w_l| r^{|l|}\big)_{l\in\Z}\big\|_{\ell^1}\le C\|w\|_{\W^1_r},\quad \forall w\in\W^1_r,
\]
with the same constant $C>0$. Hence, in view of \eqref{eq:product_estimate1},
\[
\|fg\|_{W^1_r}\le 2C\|f\|_{\W^1_r}\|g\|_{\W^1_r},\quad \forall f,g\in\W^1_r.
\]
Hence, $W^1_r$ is a Banach algebra with respect to the pointwise multiplication of functions.
This completes the proof of the Lemma.
\end{proof}


\end{document}